\theoremstyle{plain}
\newtheorem{thm}{Theorem}[section]
\newtheorem*{thm*}{Theorem}
\newtheorem{prop}[thm]{Proposition}
\newtheorem{lem}[thm]{Lemma}
\newtheorem{cor}[thm]{Corollary}
\theoremstyle{definition}
\theoremstyle{remark}
\newtheorem{rem}[thm]{Remark}%[section]
\renewcommand{\epsilon}{\varepsilon}
\newcommand{\Rm}{\operatorname{Rm}}
\newcommand{\Ric}{\operatorname{Ric}}
\newcommand{\Div}{\operatorname{div}}
\newcommand{\tr}{\operatorname{tr}}
\title[Gaussian heat kernel estimates along super Ricci flow]{Gaussian heat kernel estimates of Bamler-Zhang type along super Ricci flow}
\author{Keita Kunikawa}
\address{Department of Mathematical Science, Faculty of Science and Technology, Tokushima University, 2-1 Minamijosanjima, Tokushima, 770-8506, Japan}
\email{kunikawa@tokushima-u.ac.jp}
\author{Yohei Sakurai}
\address{Department of Mathematics, Saitama University, 255 Shimo-Okubo, Sakura-ku, Saitama-City, Saitama, 338-8570, Japan}
\email{ysakurai@rimath.saitama-u.ac.jp}
\subjclass[2020]{Primary 53E20; Secondly 58J35}
\keywords{Super Ricci flow; M\"uller quantity; Distance distortion estimate; Cutoff function; Mean value inequality; Gaussian heat kernel estimate}
\date{December 30, 2024}
\begin{document}
\maketitle

\begin{abstract}
Bamler-Zhang have developed geometric analysis on Ricci flow with scalar curvature bound.
The aim of this paper is to extend their work to various geometric flows.
We generalize some of their results to super Ricci flow whose M\"uller quantity is non-negative,
and obtain Gaussian heat kernel estimates.
\end{abstract}

%%%%%%%%%%%%%%%%%%%%%%%%%%%%
%%%%%%%%%%%%%%%%%%%%%%%%%%%%
%%%%%%%%%%%%%%%%%%%%%%%%%%%%
\section{Introduction}

%%%%%%%%%%%%%%%%%%%%%%%%%%%%
\subsection{Backgrounds}\label{sec:Background}

Let $(M,g(t))_{t\in [0,T)}$ be a solution to the Ricci flow
\begin{equation}\label{eq:RF}
\partial_t g=-2\Ric
\end{equation}
on an $n$-dimensional compact smooth manifold with $T<+\infty$.
One of the common problems in the Ricci flow theory is to understand the blow-up behavior of curvature when $t \nearrow T$.
In a pioneering work by Hamilton \cite{H1},
he has proved that the supremum of the norm of the Riemannian curvature tensor blows up at $T$;
namely,
\begin{equation}\label{eq:blowup}
\lim_{t \nearrow T} \sup_{M}|\Rm|=+\infty.
\end{equation}
After that
under the same setting,
\v{S}e\v{s}um \cite{Se} has generalized \eqref{eq:blowup} to the Ricci curvature;
namely, she has proved
\begin{equation*}
\limsup_{t \nearrow T} \sup_{M}|\Ric|=+\infty.
\end{equation*}
It is natural to ask whether the same phenomenon occurs for the scalar curvature;
namely,
\begin{equation*}
\limsup_{t \nearrow T} \sup_{M}R=+\infty.
\end{equation*}
This is known to occur in the case of $n=2,3$ or in the K\"{a}hler case (see \cite{H2}, \cite{I}, \cite{Zh}),
but it seems to be still open in the higher dimensional and non-K\"{a}hler case.
From this point of view,
it is important to investigate the structure of Ricci flow with scalar curvature bound.

Based on the background,
geometric analysis on Ricci flow with scalar curvature bound has been developed by Cao-Tran \cite{CT}, Chen-Wang \cite{CW1}, \cite{CW2}, \cite{CW3}, \cite{CW4}, Wang \cite{W1}, \cite{W2}, Zhang \cite{Z2} and so on.
Bamler-Zhang \cite{BZ1} have made remarkable progress in this direction establishing useful tools such as distance distortion estimates,
a nice cutoff function,
and mean value inequalities.
By using them,
they have also concluded Gaussian heat kernel estimates,
backward pseudolocality theorem,
and strong $\varepsilon$-regularity theorem.
Recently,
their estimates have yielded numerous results in the sequel \cite{BZ2},
and a series of works by Bamler \cite{B0}, \cite{B1}, \cite{B2}, \cite{B3} while being improved.

%%%%%%%%%%%%%%%%%%%%%%%%%%%%
\subsection{Main results}\label{sec:Main results}

The purpose of this paper is to extend the results by Bamler-Zhang \cite{BZ1} to more general geometric flows.
We focus on a super solution to the Ricci flow \eqref{eq:RF}.
An $n$-dimensional compact manifold $(M,g(t))_{t\in [0,T)}$ equipped with a time-dependent metric is called \textit{super Ricci flow} when
\begin{equation*}\label{eq:SRF}
\partial_t g \geq -2\Ric.
\end{equation*}
McCann-Topping \cite{MT} have introduced this notion,
and studied the relation between the Ricci flow theory and optimal transport theory.
Recently,
this notion has begun to be investigated not only from the viewpoint of the Ricci flow theory but also from that of metric measure geometry (see e.g., \cite{K}, \cite{KS}, \cite{KS1}, \cite{S}).  

We examine super Ricci flow under the assumption that
its \textit{M\"uller quantity} is non-negative.
For a (time-dependent) vector field $V$,
the \textit{M\"uller quantity} is defined by the following formula (see \cite[Definition 1.3]{M1}):
\begin{equation*}
\mathcal{D}(V):=\partial_{t}H-\Delta H-2| h |^2+4\Div h(V)-2 \langle \nabla H,V \rangle+2\Ric(V,V)-2h(V,V),
\end{equation*}
where
\begin{equation*}
h:=-\frac{1}{2} \partial_t g,\quad H:=\tr h.
\end{equation*}
It is well-known that
several results for Ricci flow can be extended to more general geometric flow under the assumption that its M\"uller quantity is non-negative;
for instance, the monotonicity of $\mathcal{W}$-functional and the reduced geometry (more precisely, see Subsections \ref{sec:Entropy functional}, \ref{sec:Reduced geometry}).
Also,
there are some examples of super Ricci flow whose M\"uller quantity is non-negative (see \cite[Section 2]{M1}, \cite[Section 7]{FZ}):
\begin{enumerate}\setlength{\itemsep}{+1.0mm}
\item Ricci flow;
\item Ricci flow coupled with the heat equation, called \textit{List flow} (\cite{L});
\item Ricci flow coupled with the harmonic map heat flow, called \textit{M\"uller flow} (\cite{M2});
\item mean curvature flow for spacelike hypersurfaces in Lorentzian manifold of non-negative sectional curvature;
\item (scaled) twisted K\"{a}hler-Ricci flow.
\end{enumerate}

For $x,y \in M$ and $s,t\in [0,T)$ with $s< t$,
we denote by $G(x,t;y,s)$ the heat kernel for the heat equation (more precisely, see Subsection \ref{sec:Entropy functional}).
One of our main results is the following Gaussian heat kernel estimate,
which has been formulated by Bamler-Zhang \cite{BZ1} for Ricci flow (see \cite[Theorem 1.4]{BZ1}):
\begin{thm}\label{thm:mainhk} 
Let $(M,g(t))_{t\in [0,T)}$ be an $n$-dimensional compact super Ricci flow with $T<+\infty$ satisfying $\mathcal{D}(V)\geq 0$ for all vector fields $V$.
Then for any $A>0$,
there exist positive constants $\mathcal{C}_1,\mathcal{C}_2,\mathcal{C}_3,\mathcal{C}_4,\mathcal{C}_5,\mathcal{C}_6>0$ depending only on $n,T,g(0)$ and $A$ such that the following holds:
We assume $H \leq H_1$ for $H_1> 0$.
For $s,t \in [0,T)$ with $s<t$,
we suppose $t-s \leq A H_1^{-1}$ and $s \geq (t-s)/A$.
Then we have
\begin{align}\label{thm:mainhklow} 
G(x,t; y, s) &\geq \frac{\mathcal{C}_1}{(t-s)^{n/2}} \exp \left( {- \frac{\mathcal{C}_2\, d^2_s (x, y)}{t-s}} \right), \\ \label{thm:mainhkup} 
G(x, t; y, s) &\leq \frac{\mathcal{C}_3}{(t-s)^{n/2}} \exp \left({- \frac{\mathcal{C}_4 \,d^2_s (x, y)}{t-s}} \right),\\ \label{thm:maingrad} 
|\nabla_x G|(x,t;y,s)&\leq \frac{\mathcal{C}_5}{(t-s)^{(n+1)/2}} \exp \left({- \frac{\mathcal{C}_6 \,d^2_s (x, y)}{t-s}} \right).
\end{align} 
\end{thm}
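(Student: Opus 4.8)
The plan is to transplant the scheme of Bamler--Zhang \cite{BZ1}, with the trace $H=\tr h$ of the M\"uller tensor playing the role of the scalar curvature, and to use throughout the super Ricci flow versions --- to be established in the body of the paper --- of their toolkit: the distance distortion estimates, the nice cutoff function, and the parabolic mean value inequalities. The extra structural input is the monotonicity of the $\mathcal{D}$-version of Perelman's $\mathcal{W}$-functional, available because $\mathcal{D}(V)\ge 0$; from it one extracts, with constants depending only on $n,T,g(0)$, a uniform (log-)Sobolev inequality and a volume non-collapsing bound $\vol_\sigma(B_\sigma(z,r))\ge c\,r^{n}$ at scales $r\lesssim\sqrt{t-s}$. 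Three observations keep the final constants independent of $H_1$: (i) $t-s\le AH_1^{-1}$ absorbs $H_1$ wherever the curvature appears at scale $t-s$ --- the term $(t-s)\int Hf^{2}$ in the Sobolev inequality is dominated by $A\int f^{2}$, and in $\partial_\sigma\,d\vol_\sigma=-H\,d\vol_\sigma$ the bound $H\le H_1$ gives $\partial_\sigma\,d\vol_\sigma\ge -H_1\,d\vol_\sigma$, so volume measures on $[s,t]$ are comparable up to $e^{A}$; (ii) taking $V\equiv 0$ in $\mathcal{D}(V)\ge 0$ gives $\partial_\sigma H\ge\Delta H+\tfrac{2}{n}H^{2}$, hence $H\ge -C(n,g(0))$ by the maximum principle, which together with $t<T$ also bounds $d\vol_\sigma$ from above and will give the companion non-inflating bound $\vol_\sigma(B_\sigma(z,r))\le C\,r^{n}$; (iii) $s\ge(t-s)/A$ keeps all parabolic regions reaching back from time $t$ by a bounded multiple of $t-s$ inside $[0,T)$.

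Granting these, the upper bounds come first. Moser iteration for the heat equation $\partial_\sigma u=\Delta_{g(\sigma)}u$ in the time-dependent setting, fed by the uniform Sobolev inequality and the mean value inequality, gives the rough bound $G(x,t;y,s)\le C\,(t-s)^{-n/2}$ for all $x,y$, and in particular the on-diagonal bound. To upgrade it to the Gaussian decay \eqref{thm:mainhkup}, one argues from the on-diagonal bound either by an integrated maximum principle of Davies type --- a weighted energy estimate for $\int_M G(x,t;z,\sigma)^{2}e^{\xi(z,\sigma)}\,d\vol_\sigma(z)$ with $\xi(z,\sigma)\sim d_s(z,x)^{2}/(t-\sigma)$, split at the midpoint of $[s,t]$ --- or by the mean value inequality on parabolic cylinders of radius $\sim d_s(x,y)$; in either route the distance distortion estimate is what permits the fixed-time distance $d_s$ to be used while the metric evolves, and the measure and weight evolutions are controlled by $H\le H_1$ with $t-s\le AH_1^{-1}$ and by $H\ge -C$.

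For the lower bounds one first proves a near-diagonal estimate. The conservation property $\int_M G(x,t;z,s)\,d\vol_s(z)=1$, combined with \eqref{thm:mainhkup} and the non-inflating bound, forces $\int_{B_s(x,L\sqrt{t-s})}G(x,t;z,s)\,d\vol_s(z)\ge\tfrac12$ for $L=L(n,T,g(0),A)$ large, so by the two-sided volume bounds there is $z_{0}\in B_s(x,L\sqrt{t-s})$ with $G(x,t;z_{0},s)\ge c\,(t-s)^{-n/2}$; a Harnack-type argument (or a short semigroup step with loss controlled by \eqref{thm:mainhkup}) spreads this to $G(x,t;z,s)\ge c'(t-s)^{-n/2}$ for all $z$ with $d_s(x,z)\le\sqrt{t-s}$, in particular $z=x$. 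The Gaussian lower bound \eqref{thm:mainhklow} then follows by the classical chaining argument: join $x$ to $y$ by a $d_s$-minimizing geodesic, split it into $N\sim 1+d_s(x,y)^{2}/(t-s)$ arcs of comparable length, apply the semigroup property at the $N$ intermediate times, use the distance distortion estimate to pass from the $d_s$-geodesic to the time-$\sigma$ geometry, and multiply the $N$ near-diagonal lower bounds to obtain $\mathcal{C}_1(t-s)^{-n/2}\exp(-\mathcal{C}_2 d_s(x,y)^{2}/(t-s))$ after renaming constants. Finally, for \eqref{thm:maingrad}, note that with $u=G(\cdot,\cdot;y,s)$ the super Ricci flow condition gives
\begin{equation*}
(\partial_\sigma-\Delta)|\nabla u|^{2}=-2|\Hess u|^{2}-(\partial_\sigma g+2\Ric)(\nabla u,\nabla u)\le 0,
\end{equation*}
so $|\nabla u|^{2}$ is a subsolution of the heat equation; applying the mean value inequality to it on a parabolic cylinder of radius $\sim\sqrt{t-s}$ about $(x,t)$ (whose base lies at time $\ge(s+t)/2$), estimating $\iint|\nabla u|^{2}$ by $(t-s)^{-1}\iint u^{2}$ over a slightly larger cylinder via the Caccioppoli inequality, and bounding $u^{2}$ there through \eqref{thm:mainhkup} and the distance distortion estimate, the volume bounds give $|\nabla u|^{2}(x,t)\le\mathcal{C}_5^{2}\,(t-s)^{-(n+1)}\exp(-2\mathcal{C}_6 d_s(x,y)^{2}/(t-s))$, which is \eqref{thm:maingrad}.

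The step I expect to be the main obstacle is the pair of lower bounds. The escape-of-mass estimate is relatively routine once \eqref{thm:mainhkup} and the two-sided volume bounds are in hand, but the chaining argument demands delicate geometric bookkeeping: the $N$-fold multiplicative loss must be kept uniform while the metric, the volume measure, and the distance function all evolve, and because the hypothesis is one-sided --- $\partial_\sigma g\ge -2\Ric$ and $H\le H_1$, with no matching upper bound on $\partial_\sigma g$ nor lower bound on $H$ beyond the automatic one --- several comparisons that are symmetric under Ricci flow in \cite{BZ1} must here be carried out with a single inequality. The other delicate point is deriving the uniform Sobolev inequality and, especially, the non-inflating volume bound with $H_1$-independent constants in this one-sided M\"uller setting, which is precisely what forces the scale restrictions $t-s\le AH_1^{-1}$ and $s\ge(t-s)/A$.
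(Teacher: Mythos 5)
Your proposal transplants the right toolkit, but the route diverges from the paper's in several places, and a few of the steps you sketch would not go through as stated. The most important structural difference is the order of the two Gaussian bounds: the paper proves the \emph{lower} bound first (Proposition \ref{prop:mainlower}) and uses it as an essential input in the proof of the upper bound (Proposition \ref{prop:mainupper}), whereas you propose to prove the upper bound first and then extract the lower bound from it via escape of mass and chaining. The paper's lower bound is not obtained by chaining at all: it comes from Fang--Zheng's Gaussian lower bound (Theorem \ref{thm:lowerGauss}), whose root is Perelman's reduced distance --- Propositions \ref{prop:reduced estimate} and \ref{prop:reduced kernel estimate} --- combined with the iterated distance-distortion Lemma \ref{lem:lowerlem} to replace $d_t$ by $d_s$. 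Your proposal never invokes the reduced distance; yet $\ell_{(x,t)}$ is also what underlies the construction of the cutoff function (Theorem \ref{thm:cutoff}) and the non-inflating estimate (Theorem \ref{thm:inflat}), so it cannot really be bypassed in this paper's framework. For the upper bound, the paper does not run a Davies weighted $L^2$-estimate; it uses the Hein--Naber average bound (Proposition \ref{prop:Average}, derived from the heat-kernel log-Sobolev inequality and Gaussian concentration, Theorem \ref{thm:BakryLS} and Corollary \ref{cor:GaussConc}) together with the lower bound, the non-collapsing and non-inflating volume bounds, and the mean value inequality at the small scale $r\sim\sqrt{t-s}$. Finally, the gradient bound in the paper follows in a few lines from Zhang's gradient estimate (Theorem \ref{thm:Zhang}) applied to $G$ and the already-established upper bound, rather than from a $(\partial_\sigma-\Delta)|\nabla u|^{2}\le 0$ subsolution argument fed into a mean value inequality.

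Beyond being a different route, two of your steps contain concrete gaps. First, the alternative you offer for the Gaussian upper bound --- ``the mean value inequality on parabolic cylinders of radius $\sim d_s(x,y)$'' --- does not work, because Theorem \ref{thm:mean} (and the cutoff Theorem \ref{thm:cutoff} it relies on) require the scale restriction $H\le r_0^{-2}$; at radius $r_0\sim d_s(x,y)\gg\sqrt{t-s}$ this would demand $H\lesssim d_s(x,y)^{-2}$, which is not implied by $H\le H_1\le A/(t-s)$. The mean value inequality is, in fact, always applied at scale $\sqrt{t-s}$. Second, in your lower-bound sketch, the spreading step from the single good point $z_0$ with $G(x,t;z_0,s)\gtrsim(t-s)^{-n/2}$ to all $z$ with $d_s(x,z)\le\sqrt{t-s}$ requires a Harnack inequality in the \emph{second} (backward, conjugate-heat) variable $z$ at fixed $(x,t)$. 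The Harnack inequality actually available in this setting (Corollary \ref{cor:lemZhang}) is for the forward heat equation --- it compares values at different $(x,t)$ for fixed $(z,s)$ --- and no conjugate-heat Harnack is developed in the paper. This is precisely the obstruction the reduced-distance route sidesteps: Proposition \ref{prop:reduced kernel estimate} already gives a pointwise lower bound on $G(x,t;y,s)$ for \emph{all} $y$ simultaneously. So while your chaining picture is the standard one for static or two-sidedly controlled geometries, in this one-sided super Ricci flow setting you would need to supply an additional tool (a conjugate-heat Harnack, or a direct $\ell$-distance argument) to make it close.
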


\begin{rem}\label{rem:constant}
We give a remark for the dependence of the constants on $g(0)$.
More precisely,
they depend on the volume and Sobolev constant of $M$ with respect to $g(0)$,
and the minimum of $H$ at $t=0$,
which is derived from the logarithmic Sobolev inequality of Fang-Zheng \cite{FZ} (cf. \cite[Theorem 1.1]{FZ}, Subsection \ref{sec:Entropy functional}).
This remark can be applied throughout this paper.
\end{rem}

%%%%%%%%%%%%%%%%%%%%%%%%%%%%
%%%%%%%%%%%%%%%%%%%%%%%%%%%%
%%%%%%%%%%%%%%%%%%%%%%%%%%%%
\section{Preliminaries}\label{sec:Preliminaries}

We recall some previous works for general geometric flow.
Let $(M,g(t))_{t\in [0,T)}$ denote an $n$-dimensional compact manifold with a time-dependent metric with $T<+\infty$.

%%%%%%%%%%%%%%%%%%%%%%%%%%%%
\subsection{Notations and basics}\label{sec:Basics}

In the present subsection,
we introduce some notations,
and elementary facts that will be used frequently throughout this article.

We denote by $d_t$ and $m_t$ the Riemannian distance and Riemannian volume measure with respect to $g(t)$,
respectively.
When $t$ is clear from the context,
we omit it,
and write them as $d$ and $m$.
Notice that
the time derivative of $d m$ is equal to $-H\,dm$.
For $x_0\in M, r_0>0$ and $t_0\in [0,T)$,
we denote by $B(x_0,r_0,t_0)$ the open ball of radius $r_0$ centered at $x_0$ with respect to $g(t_0)$.
Let $Q^{+}(x_0, r_0,t_0)$ stand for the \textit{forward parabolic cube} defined as
\begin{equation*}\label{eq:cube}
 Q^+(x_0, r_0,t_0):=\{ (x, t) \in M \times [0,T) \mid x\in B(x_0,r_0,t),\, t\in [t_0,t_0+r^2_0] \}.
% Q^-(x_0, r_0,t_0):=\{ (x, t) \in M \times [0,T) \mid x\in B(x_0,r_0,t),\, t\in [t_0-r^2_0,t_0] \}.
\end{equation*}

We begin with the following elementary fact (see \cite[Lemma 3.2]{FZ}):
\begin{prop}[\cite{FZ}]\label{prop:lowerS}
Assume that
$(M,g(t))_{t\in [0,T)}$ satisfies $\mathcal{D}(V)\geq 0$ for all vector fields $V$.
Then we have
\begin{equation*}
H\geq -\frac{n}{2t}.
\end{equation*}
\end{prop}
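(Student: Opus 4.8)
The plan is to establish the lower bound $H \geq -\tfrac{n}{2t}$ by deriving an evolution inequality for $H$ under the hypothesis $\mathcal{D}(V) \geq 0$, and then applying a maximum principle comparison argument. First I would exploit the freedom to choose $V$ in $\mathcal{D}(V) \geq 0$ cleverly: the M\"uller quantity $\mathcal{D}(V) = \partial_t H - \Delta H - 2|h|^2 + 4\Div h(V) - 2\langle \nabla H, V\rangle + 2\Ric(V,V) - 2h(V,V)$ is (up to the terms not involving $V$) a quadratic form in $V$, so completing the square in $V$ and optimizing will yield a pointwise differential inequality for $H$ alone. The natural choice, following M\"uller's original computation, is $V = 0$ when one only wants a crude bound, but here one wants the sharp coefficient $n/2$, so one should instead keep the $\Ric(V,V)$ and $h(V,V)$ terms and select $V$ to kill the first-order terms; in fact the cleanest route is to observe that with $V = 0$ one gets $\partial_t H - \Delta H \geq 2|h|^2 \geq \tfrac{2}{n} H^2$ by Cauchy-Schwarz ($|h|^2 \geq (\tr h)^2/n = H^2/n$), since the trace-free part only helps.

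The second step is to run the maximum principle on the resulting inequality $\partial_t H - \Delta H \geq \tfrac{2}{n} H^2$ on the compact manifold $M$. Setting $H_{\min}(t) := \min_{x \in M} H(x,t)$, at an interior spatial minimum one has $\Delta H \geq 0$, so in the barrier/viscosity sense $\tfrac{d}{dt} H_{\min}(t) \geq \tfrac{2}{n} H_{\min}(t)^2$. If $H_{\min}(t) \geq 0$ for all $t$ the claimed bound is trivial, so assume $H_{\min}(t_0) < 0$ for some $t_0$; by the ODE comparison the function $\phi(t)$ solving $\phi' = \tfrac{2}{n}\phi^2$, $\phi(t_0) = H_{\min}(t_0) < 0$, namely $\phi(t) = \big(\phi(t_0)^{-1} - \tfrac{2}{n}(t - t_0)\big)^{-1}$, stays negative and lies below $H_{\min}$. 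Taking $t_0 \searrow 0$ in the right way — or more directly, comparing $H_{\min}(t)$ with the solution of the ODE that blows up to $-\infty$ as $t \searrow 0$, which is precisely $t \mapsto -\tfrac{n}{2t}$ — yields $H_{\min}(t) \geq -\tfrac{n}{2t}$. Concretely, one checks that $-\tfrac{n}{2t}$ solves $\phi' = \tfrac{2}{n}\phi^2$ exactly, and that any subsolution $H_{\min}$ with $H_{\min}(t) \to$ (a finite limit or bounded below near $0$, which holds by smoothness and compactness on $[0,T)$ — wait, $t=0$ is included so $H$ is bounded on a neighborhood of $t=0$) must lie above it; the point is that $-\tfrac{n}{2t} \to -\infty$ as $t \searrow 0$ while $H_{\min}$ stays bounded, so the inequality holds for small $t$, and then the comparison principle propagates it forward.

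The main obstacle I anticipate is making the maximum-principle step rigorous at the endpoint $t = 0$ and handling the fact that $-\tfrac{n}{2t}$ is singular there. Since $M$ is compact and $g(t)$ is smooth on $[0,T)$ with $t=0$ included, $H$ is continuous on $M \times [0,T)$, hence $H_{\min}$ is bounded below on $[0, T']$ for each $T' < T$; so for sufficiently small $t$ we automatically have $H_{\min}(t) \geq -\tfrac{n}{2t}$, and then we only need the comparison inequality to be preserved for larger $t$. This is a standard application of the scalar maximum principle for supersolutions of semilinear parabolic equations (e.g. Hamilton's maximum principle), using that the comparison ODE solution $-\tfrac{n}{2t}$ and $H_{\min}$ can only touch if $H_{\min}$ fails to be a strict supersolution there, which is ruled out by $\partial_t H - \Delta H \geq \tfrac{2}{n}H^2$. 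One should double-check the sign conventions (here $h = -\tfrac12 \partial_t g$, so under Ricci flow $h = \Ric$ and $H = R$, recovering the well-known bound $R \geq -\tfrac{n}{2t}$), and confirm that the trace inequality $|h|^2 \geq H^2/n$ is being applied in the correct direction — it is, since we need a \emph{lower} bound on the reaction term. The rest is routine.
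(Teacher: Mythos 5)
Your proof is correct and takes the standard (and, one can safely assume, Fang--Zheng's own) route: set $V=0$ in $\mathcal{D}(V)\geq 0$ to obtain $\partial_t H - \Delta H \geq 2|h|^2 \geq \tfrac{2}{n}H^2$ via Cauchy--Schwarz on the trace, then run Hamilton's maximum principle to get $\tfrac{d}{dt}H_{\min} \geq \tfrac{2}{n}H_{\min}^2$ in the barrier sense, and compare with the exact solution $-\tfrac{n}{2t}$ of $\phi'=\tfrac{2}{n}\phi^2$, using that $H$ is smooth up to $t=0$ on the compact $M$ so $H_{\min}$ is bounded below near $t=0$ while $-\tfrac{n}{2t}\to -\infty$. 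One small remark: your initial suggestion that one might need to optimize over $V$ to get the sharp coefficient is a red herring (which you correct yourself) -- the choice $V=0$ already gives the sharp reaction term, and the Gronwall-type ODE comparison at a potential touching point, which you leave implicit, is routine since $u\mapsto u^2$ is locally Lipschitz.
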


\begin{rem}\label{rem:rescaling}
We also mention the rescaling argument.
For a positive $r_0>0$,
we will consider the parabolic rescaling
\begin{equation*}
\bar{g}(\tau):=r^{-2}_0\,g(r^2_0 \tau)=r^{-2}_0g(t)
\end{equation*}
with $\tau:=r^{-2}_0 t$.
We see that $\bar{h}=h,\bar{H}=r^2_0 H$ and $\overline{\mathcal{D}}(V)=r^4_0 \mathcal{D}(r^{-2}_0 V)$,
where $\bar{h}, \bar{H}, \overline{\mathcal{D}}(V)$ are corresponding objects for $\bar{g}(\tau)$.
In particular,
the super Ricci flow and the non-negativity of the M\"uller quantity are preserved under this rescaling.
\end{rem}

%%%%%%%%%%%%%%%%%%%%%%%%%%%%
\subsection{Monotonicity of entropy}\label{sec:Entropy functional}
The monotonicity of Perelman's $\mathcal{W}$-functional is one of the powerful tools in the Ricci flow theory (\cite{P}).
It is well-known that
such a monotonicity can be extended to general geometric flow under the non-negativity of the M\"uller quantity (see \cite[Theorem 3.1]{H}, \cite[Theorem 5.2]{GPT}, \cite[Lemma 3.1]{FZ}).
In this subsection,
we collect some results that can be derived from the monotonicity.

Fang-Zheng \cite{FZ} obtained a logarithmic Sobolev inequality by using the monotonicity of $\mathcal{W}$-functional,
and derived the following Sobolev inequality (see \cite[Theorems 1.1, 1.3]{FZ}, and cf. \cite{Y}, \cite{Z3}):
\begin{prop}[\cite{FZ}]\label{prop:Sob}
Assume that
$(M,g(t))_{t\in [0,T)}$ satisfies $\mathcal{D}(V)\geq 0$ for all vector fields $V$.
Then for all $t\in [0,T)$ and $u\in W^{1,2}(M)$ we have
\begin{equation*}
\left(\int_{M}\,|u|^{\frac{2n}{n-2}}\,dm\right)^{\frac{n-2}{n}}\leq C_1\int_{M}\,\left(|\nabla u|^2+\frac{H}{4}u^2 \right)\,dm+C_2 \,\int_{M}\,u^2\,dm,
\end{equation*}
where $C_1,C_2>0$ are positive constants depending only on $n,T$ and $g(0)$.
\end{prop}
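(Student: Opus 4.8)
The plan is to derive this Sobolev inequality from a uniform-in-time \emph{logarithmic} Sobolev inequality, and then to run the classical functional-analytic passage from a logarithmic Sobolev inequality to a Sobolev inequality. First I would introduce Perelman's $\mathcal{W}$-entropy adapted to our setting, with scalar curvature replaced by $H$: for $\tau>0$,
\[
\mathcal{W}(g,f,\tau):=\int_M\Big[\tau\big(|\nabla f|^2+H\big)+f-n\Big](4\pi\tau)^{-n/2}e^{-f}\,dm,\qquad \mu(g,\tau):=\inf_f \mathcal{W}(g,f,\tau),
\]
the infimum being over all $f\in C^{\infty}(M)$ with $\int_M(4\pi\tau)^{-n/2}e^{-f}\,dm=1$. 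The monotonicity of $\mathcal{W}$ along the flow quoted above (this is exactly where $\mathcal{D}(V)\geq 0$ is used) yields $\mu(g(t_2),\sigma)\geq\mu\big(g(t_1),\sigma+(t_2-t_1)\big)$ for $0\leq t_1<t_2<T$ and $\sigma>0$. Taking $t_1=0$ and using that $g(0)$ is a fixed smooth metric on a compact manifold, so that $\mu(g(0),\cdot)$ is lower semicontinuous on $(0,\infty)$ and tends to $0$ as $\tau\downarrow 0$, one obtains $\mu(g(t),\sigma)\geq -C_0$ for all $t\in[0,T)$ and all $\sigma\in(0,\sigma_0]$, with $C_0$ depending only on $n,T,g(0)$. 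Unwinding this bound through the substitution $u^2=(4\pi\sigma)^{-n/2}e^{-f}$ and writing $\epsilon:=4\sigma$, one gets for every $u\in W^{1,2}(M)$ with $\int_M u^2\,dm=1$ and every $\epsilon\in(0,\epsilon_0]$
\[
\int_M u^2\log u^2\,dm\leq \epsilon\int_M\Big(|\nabla u|^2+\frac{H}{4}u^2\Big)\,dm-\frac{n}{2}\log\epsilon+C_1 ,
\]
with $C_1$ again depending only on $n,T,g(0)$; this is the logarithmic Sobolev inequality of Fang--Zheng (see \cite{FZ}).

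For the second step, fix $t\in[0,T)$ and work with the Schr\"{o}dinger operator $L:=-\Delta_{g(t)}+\tfrac{1}{4}H$ on $L^2(M,m_t)$, which is self-adjoint and bounded below (because $M$ is compact and $g(t),H$ are smooth) and generates a positivity-preserving semigroup $e^{-\tau L}$. The inequality just displayed is precisely a defective logarithmic Sobolev inequality for the quadratic form $\mathcal{E}(u):=\langle Lu,u\rangle=\int_M\big(|\nabla u|^2+\tfrac{1}{4}Hu^2\big)\,dm$, with the sharp logarithmic correction $-\tfrac{n}{2}\log\epsilon$; by the standard semigroup-interpolation argument of Davies and Gross it gives the ultracontractivity bound $\|e^{-\tau L}\|_{L^1(m_t)\to L^\infty(m_t)}\leq C\tau^{-n/2}$ for $\tau\in(0,\tau_0]$, with $C,\tau_0$ independent of $t$. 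In turn, such an on-diagonal heat kernel bound for $e^{-\tau L}$ is equivalent, by the classical Nash-inequality-plus-truncation argument (see e.g. \cite{Z3} in the Ricci flow context), to the Sobolev inequality $\|u\|_{L^{2n/(n-2)}(m_t)}^2\leq C_1'\,\mathcal{E}(u)+C_2'\|u\|_{L^2(m_t)}^2$, i.e.
\[
\Big(\int_M |u|^{\frac{2n}{n-2}}\,dm\Big)^{\frac{n-2}{n}}\leq C_1'\int_M\Big(|\nabla u|^2+\frac{H}{4}u^2\Big)\,dm+C_2'\int_M u^2\,dm ,
\]
with $C_1',C_2'$ depending only on $n,T,g(0)$. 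Since $t\in[0,T)$ was arbitrary, this is the assertion.

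I expect the only real obstacle to lie in the first step. Granting the $\mathcal{W}$-monotonicity, one still has to justify the lower semicontinuity and the $\tau\downarrow 0$ behaviour of $\mu(g(0),\cdot)$ that turn $\mu(g(t),\sigma)\geq\mu(g(0),\sigma+t)$ into a bound uniform over all $t\in[0,T)$, and --- more delicately --- to control the potential $\tfrac{1}{4}H$: by Proposition~\ref{prop:lowerS} it is bounded below only by $-n/(2t)$, so as $t\downarrow 0$ the operator $L$ fails to be uniformly bounded below, and one must verify that this does not spoil the uniformity of the constants $C,\tau_0$ in the ultracontractivity estimate, hence of $C_1',C_2'$. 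This is exactly the point at which the logarithmic Sobolev constant must be tracked through the fixed initial data --- the volume and Sobolev constant of $g(0)$ together with $\min_M H$ at $t=0$ --- as recorded in Remark~\ref{rem:constant}. Everything after the first step consists of the standard ``static'' equivalences between logarithmic Sobolev, ultracontractive, and Sobolev inequalities, applied on each time slice with the single metric $g(t)$.
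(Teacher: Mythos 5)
The paper contains no proof of Proposition~\ref{prop:Sob}: it is quoted from Fang--Zheng \cite{FZ} (their Theorems 1.1 and 1.3), with the one-sentence remark that the proof there derives a uniform logarithmic Sobolev inequality from the monotonicity of the $\mathcal{W}$-functional and then passes to the Sobolev inequality. Your reconstruction follows exactly that outline, so there is nothing in the paper itself to contrast against; your route coincides with the one the paper points to.

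On the two obstacles you flag at the end. First, $\mu(g(0),\cdot)$ is a pointwise infimum of continuous functions of $\tau$ and is therefore \emph{upper}, not lower, semicontinuous; but what you actually need is a lower bound for $\mu(g(0),\tau)$ on $(0,\sigma_0+T]$, and this is available from the standard facts that $\mu(g(0),\tau)\to 0$ as $\tau\downarrow 0$ (for a fixed smooth compact metric with bounded potential) and that $\mu(g(0),\cdot)$ is finite and continuous on $(0,\infty)$. Second, the worry that $H\geq -n/(2t)$ might spoil uniformity of the Davies--Gross constants is resolved by the logarithmic Sobolev inequality itself together with a volume bound: integrating $\partial_t\log\vol(M,g(t))=-\vol^{-1}\int_M H\,dm\leq n/(2t)$ gives a bound on $\vol(M,g(t))$ uniform for $t\in[0,T)$, hence for normalized $u$ one has $\int_M u^2\log u^2\,dm\geq-\log\vol(M,g(t))\geq -C$, and plugging $\epsilon=\epsilon_0$ into your displayed inequality then yields a $t$-independent lower bound on $\mathcal{E}(u)=\int_M\left(|\nabla u|^2+\tfrac14 Hu^2\right)dm$. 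Replacing $L$ by $L+A$ for this fixed $A$ makes the quadratic form non-negative without changing the short-time heat kernel bound by more than a bounded factor, and this is all the ultracontractivity/Nash machinery needs. With these two points settled, your proposal is correct and is a faithful rendering of the \cite{FZ} argument.
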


They further concluded that
the following $\kappa$-noncollapsing estimate holds (see \cite[Theorem 6.1]{Y}, \cite[Theorem 5.1]{FZ}):
\begin{thm}[\cite{FZ}]\label{thm:noncollapse}
Assume that
$(M,g(t))_{t\in [0,T)}$ satisfies $\mathcal{D}(V)\geq 0$ for all vector fields $V$.
For $x\in M, r \in (0,\sqrt{T})$ and $t\in [0,T)$,
we assume $H \leq r^{-2}$ on $B(x,r,t)$.
Then we have
\begin{equation*}
m( B(x,r,t)) \geq \kappa\, r^n,
\end{equation*}
where $\kappa>0$ is a positive constant depending only on $n,T$ and $g(0)$.
\end{thm}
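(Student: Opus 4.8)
The plan is to deduce the estimate from the Sobolev inequality of Proposition~\ref{prop:Sob} by a standard dyadic iteration together with the Euclidean asymptotics of small geodesic balls. Write $V(s):=m(B(x,s,t))$ and $\phi(s):=V(s)/s^{n}$, so that the goal becomes $\phi(r)\geq\kappa$. By the rescaling of Remark~\ref{rem:rescaling} one could normalize, but it is just as easy to argue directly at each scale.

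\emph{One-scale inequality.} Fix $s\in(0,r]$. Since $B(x,s,t)\subseteq B(x,r,t)$ and $s^{-2}\geq r^{-2}$, we have $H\leq r^{-2}\leq s^{-2}$ on $B(x,s,t)$, and $s<\sqrt{T}$. Choose a Lipschitz cutoff $u=u_{s}$ with $u\equiv 1$ on $B(x,s/2,t)$, $\supp u\subseteq B(x,s,t)$ and $|\nabla u|\leq 3/s$ a.e.\ — for instance $u=\eta(d_{t}(x,\cdot)/s)$ for a fixed profile $\eta$ — so that $u\in W^{1,2}(M)$. Feeding $u$ into Proposition~\ref{prop:Sob}: on the right-hand side $|\nabla u|^{2}\leq 9s^{-2}$, $\tfrac{H}{4}u^{2}\leq\tfrac14 s^{-2}u^{2}$ pointwise on $\supp u$, $u^{2}\leq 1$, and $C_{2}\leq C_{2}Ts^{-2}$ because $s^{2}<T$; hence the right-hand side is $\leq \bar C\,s^{-2}V(s)$ with $\bar C=\bar C(n,T,g(0))$. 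On the left-hand side $u\geq\mathbf 1_{B(x,s/2,t)}$ gives $\bigl(\int|u|^{2n/(n-2)}\,dm\bigr)^{(n-2)/n}\geq V(s/2)^{(n-2)/n}$. Therefore
\[
V(s/2)^{\frac{n-2}{n}}\leq \bar C\,s^{-2}V(s),\qquad\text{equivalently}\qquad \phi(s/2)^{\frac{n-2}{n}}\leq C'\,\phi(s),\quad C':=2^{\,n-2}\bar C,
\]
the second form obtained by dividing by $(s/2)^{n-2}$.

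\emph{Iteration and conclusion.} Set $\gamma:=\tfrac{n}{n-2}>1$ and iterate the last inequality along $s=r,r/2,r/4,\dots$. With $b_{k}:=\phi(r/2^{k})$ one gets $b_{k+1}\leq(C')^{\gamma}b_{k}^{\gamma}$, hence
\[
b_{k}\leq (C')^{\gamma+\gamma^{2}+\cdots+\gamma^{k}}\,b_{0}^{\gamma^{k}}\leq \bigl((C')^{n/2}\phi(r)\bigr)^{\gamma^{k}},
\]
using $\sum_{j=1}^{k}\gamma^{j}\leq\tfrac{\gamma}{\gamma-1}\gamma^{k}=\tfrac n2\gamma^{k}$. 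Since $g(t)$ is a smooth metric on the compact manifold $M$, $\phi(s)\to\omega_{n}>0$ as $s\to 0$, where $\omega_{n}$ is the volume of the Euclidean unit $n$-ball; in particular $b_{k}\to\omega_{n}$. If $(C')^{n/2}\phi(r)<1$ the displayed bound would force $b_{k}\to 0$, a contradiction; hence $\phi(r)\geq (C')^{-n/2}=:\kappa$, which depends only on $n,T,g(0)$ through the Sobolev constants $C_{1},C_{2}$.

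The argument is mostly bookkeeping; the one delicate point is the one-scale step, where one must arrange that \emph{every} term on the right of Proposition~\ref{prop:Sob} scales like $s^{-2}V(s)$, and this uses crucially both the hypothesis $H\leq s^{-2}$ on the ball and the restriction $s<\sqrt{T}$ to absorb the $C_{2}$-term. One should also note that Proposition~\ref{prop:Sob} is stated for $n\geq 3$; for $n=1,2$ the same scheme goes through after replacing the exponent $2n/(n-2)$ by any fixed exponent $>2$ (equivalently, by tensoring with a Euclidean factor), which only alters the numerical value of $\kappa$.
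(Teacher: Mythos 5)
Your proof is correct: it derives the $\kappa$-noncollapsing bound from the uniform Sobolev inequality of Proposition~\ref{prop:Sob} by the standard Carron-type dyadic iteration on $\phi(s)=m(B(x,s,t))/s^{n}$, with the absorption of the $C_{2}$-term into $s^{-2}V(s)$ via $s<\sqrt{T}$ and the Euclidean small-ball asymptotics $\phi(s)\to\omega_{n}$ supplying the contradiction (one should, as is harmless, enlarge $C'$ so that $C'\geq 1$ before summing the geometric exponents). This is essentially the argument of Ye and Fang--Zheng, which the paper cites for this theorem, so the route matches the one underlying the quoted result.
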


For $x,y \in M$ and $s,t\in [0,T)$ with $s< t$,
we denote by $G(x,t;y,s)$ the heat kernel for the heat equation;
namely,
for a fixed $(y,s) \in M \times [0,T)$,
it solves
\begin{equation*}
 (\partial_t - \Delta_x ) G(\cdot,\cdot;y,s) = 0,\quad \lim_{t \searrow s} G(\cdot,t;y,s)=\delta_y.
\end{equation*}
Notice that
$G(x,t;\cdot,\cdot)$ is the kernel for the conjugate heat equation;
namely,
for any $(x,t) \in M \times [0,T)$,
\begin{equation*}
(-\partial_s -\Delta_y +H)G(x,t;\cdot,\cdot) = 0,\quad \lim_{s \nearrow t} G(x,t;\cdot, s) = \delta_x.
\end{equation*}
They also showed the following upper bound of Zhang type (see \cite[(1.5)]{Z2}, \cite[Lemma 6.3]{FZ}):
\begin{prop}[\cite{FZ}]\label{prop:upper heat kernel}
Assume that
$(M,g(t))_{t\in [0,T)}$ satisfies $\mathcal{D}(V)\geq 0$ for all vector fields $V$.
Then for all $x,y\in M$ and $s,t \in [0,T)$ with $s< t$,
we have
\begin{equation*}
G(x,t;y,s) \leq \frac{C}{(t-s)^{n/2}},
\end{equation*}
where $C>0$ is a positive constant depending only on $n,T$ and $g(0)$.
\end{prop}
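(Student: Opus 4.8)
The plan is to read Proposition~\ref{prop:upper heat kernel} as an ultracontractivity estimate for the (conjugate) heat semigroup and to derive it from the Sobolev inequality of Proposition~\ref{prop:Sob} by Nash's argument. First I would reduce the pointwise bound to an $L^1\to L^2$ bound. By the reproducing formula
\begin{equation*}
G(x,t;y,s)=\int_M G(x,t;z,\theta)\,G(z,\theta;y,s)\,dm_\theta(z),\qquad \theta:=\tfrac{s+t}{2},
\end{equation*}
and the Cauchy--Schwarz inequality,
\begin{equation*}
G(x,t;y,s)\le \Big(\int_M G(x,t;z,\theta)^2\,dm_\theta(z)\Big)^{1/2}\Big(\int_M G(z,\theta;y,s)^2\,dm_\theta(z)\Big)^{1/2},
\end{equation*}
so it suffices to bound by $C\,(t-s)^{-n/4}$ the $L^2(dm_\theta)$-norm of the conjugate heat kernel $G(x,t;\cdot,\theta)$ (which has unit mass, since $\int_M G(x,t;\cdot,\sigma)\,dm_\sigma=1$ for every $\sigma<t$) and of the forward heat kernel $G(\cdot,\theta;y,s)$.

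For such an $L^1\to L^2$ bound I would differentiate $\|u(\cdot,\tau)\|_{L^2(dm_\tau)}^2$ along the flow, $u$ being the relevant kernel. Because $\partial_t\,dm=-H\,dm$ (and, for the conjugate kernel, because of the zeroth-order term $H$ in its equation), integration by parts yields $\tfrac{d}{d\tau}\|u\|_2^2=-2\int|\nabla u|^2\,dm_\tau-\int Hu^2\,dm_\tau$ in both cases. The crucial point is that the term $\tfrac{H}{4}u^2$ in Proposition~\ref{prop:Sob} is precisely what is needed to absorb part of $-\int Hu^2$, and the residual is controlled by the uniform lower bound $H\ge -K$ with $K=K(n,g(0))$ as in Remark~\ref{rem:constant}, which follows from the maximum principle applied to the evolution inequality $\partial_t H\ge \Delta H+\tfrac2n H^2$ (a consequence of $\mathcal{D}(0)\ge0$ and $|h|^2\ge\tfrac1n H^2$; see also Proposition~\ref{prop:lowerS}). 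Combining this with Proposition~\ref{prop:Sob} and the H\"older interpolation $\|u\|_2\le\|u\|_1^{2/(n+2)}\|u\|_{2n/(n-2)}^{n/(n+2)}$ produces Nash's differential inequality
\begin{equation*}
\frac{d}{d\tau}\|u(\cdot,\tau)\|_2^2\le -\frac{c_1}{\|u(\cdot,\tau)\|_1^{4/n}}\,\|u(\cdot,\tau)\|_2^{2+4/n}+c_2\,\|u(\cdot,\tau)\|_2^2
\end{equation*}
with $c_1,c_2$ depending only on $n,T,g(0)$, where $\|u(\cdot,\tau)\|_1$ stays bounded: it is identically $1$ for the conjugate kernel, and at most $e^{KT}$ for the forward kernel since $\tfrac{d}{d\tau}\|u\|_1\le K\|u\|_1$. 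Integrating this ODE over the time interval of length at most $T$ between the time of the concentrated data and $\theta$ gives $\|u(\cdot,\theta)\|_2^2\le C\,(t-s)^{-n/2}$ with $C=C(n,T,g(0))$, and plugging this into the Cauchy--Schwarz bound above finishes the proof.

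The main obstacle I anticipate is the absence of any heat kernel or energy monotonicity, so that the whole argument hinges on the exact matching of the $\tfrac{H}{4}u^2$ term of the Sobolev inequality with the term produced by differentiating the time-dependent volume measure and the potential of the conjugate heat equation. A secondary, more technical difficulty is to keep every constant depending only on $n$, $T$ and $g(0)$: this forces one to use the uniform-in-$t$ Sobolev constant of Proposition~\ref{prop:Sob} and the uniform lower bound on $H$, and to exploit $T<+\infty$ when integrating Nash's inequality, since the merely time-local bound $H\ge -n/(2t)$ of Proposition~\ref{prop:lowerS} would not suffice for small $s$.
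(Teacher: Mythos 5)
Your proof is correct, and it is precisely the Nash-type ultracontractivity argument underlying the cited \cite[Lemma~6.3]{FZ} (following Q.~S.~Zhang \cite[(1.5)]{Z2}); the paper itself does not reprove the statement. Two details you got right that are worth emphasizing: first, the coefficient $H/4$ in Proposition~\ref{prop:Sob} is calibrated so that, after writing $-\int Hu^2\,dm = -2\int \tfrac{H}{4}u^2\,dm - \tfrac12\int Hu^2\,dm$ (the same expression $-2\int|\nabla u|^2\,dm-\int Hu^2\,dm$ arises in both the forward case, entirely from $\partial_t\,dm=-H\,dm$, and the conjugate case, from the potential $H$ combined with the measure term), half of the $H$-term is absorbed into the Sobolev energy and the residual is controlled by a uniform lower bound on $H$; second, that uniform bound must indeed be $H\ge\min_M H(\cdot,0)$, obtained via the maximum principle from $\partial_t H\ge\Delta H+\tfrac{2}{n}H^2$ (a consequence of $\mathcal{D}(0)\ge 0$), rather than the $t$-local bound of Proposition~\ref{prop:lowerS}, which degenerates as $s\to 0$ and would spoil the uniformity of the Gronwall factor $e^{c_2 T}$. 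This matches the dependence on the minimum of $H$ at $t=0$ recorded in Remark~\ref{rem:constant}.
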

%\begin{proof}
%More precisely,
%they proved
%\begin{equation*}
%G(x,t;y,s)\leq \frac{\exp[2A_1T+A_2-T \min H^{-}_0+n]}{(4(t-s))^{\frac{n}{2}}},
%\end{equation*}
%where $H^{-}:=\min \{0,H\}\leq 0$.
%\end{proof}

%%%%%%%%%%%%%%%%%%%%%%%%%%%%
\subsection{Reduced geometry}\label{sec:Reduced geometry}
Perelman's reduced geometry is also a crucial concept in the Ricci flow theory (\cite{P}).
It is also well-known that
the reduced geometry can be extended to general geometric flow under the assumption that the M\"uller quantity is non-negative (see e.g., \cite{M1}, \cite{H}, \cite{Y1}, \cite{Y2}).
Here we present some results concerning the reduced geometry.

Let us recall the notion of the reduced distance.
Let $x,y\in M$ and $s,t\in [0,T)$ with $s<t$.
The \textit{$L$-distance} $L_{(x,t)}(y,s)$ from a space-time base point $(x,t)$ to $(y,s)$ is defined by
\begin{equation*}
L_{(x,t)}(y,s):=\inf_{\gamma}\int^{t}_{s}\sqrt{t-\xi}\left[ H +\left| \frac{d\gamma}{d\xi} \right|^{2} \right]\,d\xi,
\end{equation*}
where the infimum is taken over all curves $\gamma:[s,t]\to M$ with $\gamma(s)=y$ and $\gamma(t)=x$.
The \textit{reduced distance} $\ell_{(x,t)}(y,s)$, and its \textit{modified one} $\overline{L}_{(x,t)}(y,s)$ are defined as
\begin{equation*}
\ell_{(x,t)}(y,s):=\frac{1}{2\sqrt{t-s}}L_{(x,t)}(y,s),\quad \overline{L}_{(x,t)}(y,s):=4(t-s) \,\ell_{(x,t)}(y,s).
\end{equation*}
It is well-known that
the following fundamental inequality holds under the non-negativity of the M\"uller quantity (see \cite[Lemmas 5.2, 5.3]{M1}, and also \cite[Theorem 3.4]{H}, \cite[Subsection 2.3]{Y1}, \cite{Y2}):
\begin{prop}[\cite{M1}, \cite{H}, \cite{Y1}, \cite{Y2}]\label{prop:reduced heat estimate}
Assume that
$(M,g(t))_{t \in [0,T)}$ satisfies $\mathcal{D}(V)\geq 0$ for all vector fields $V$.
Then we have
\begin{equation}\label{eq:reduced heat estimate}
(-\partial_s+\Delta)  \overline{L}_{(x,t)} \leq 2n
\end{equation}
in the barrier sense.
\end{prop}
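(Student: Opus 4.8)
The plan is to prove the inequality in the ordinary sense at every point off the $L$-cut locus of $(x,t)$ by combining the Perelman-type first- and second-variation formulas for the reduced distance (in the Müller-type form where $\mathcal{D}$ replaces Hamilton's trace-Harnack expression), and then to pass to the barrier sense at cut points by the classical comparison-curve argument. The hypothesis $\mathcal{D}(V)\geq 0$ is used only in the second-variation step, which we take from \cite{M1}.

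First, fix the space-time base point $(x,t)$, write $\tau:=t-s$, and let $(y,s)$ be a point off the $L$-cut locus of $(x,t)$, so that $\ell_{(x,t)}$ is smooth near $(y,s)$ with a unique minimizing $L$-geodesic $\gamma$ from $(x,t)$ to $(y,s)$. Put $K:=\int^{\tau}_{0}\xi^{3/2}\,\mathcal{D}(\dot\gamma(\xi))\,d\xi$, which is nonnegative by assumption. The first variation of the $L$-length together with the $L$-geodesic equation (\cite[Lemma 5.2]{M1}; see also \cite[Theorem 3.4]{H}, \cite[Subsection 2.3]{Y1}, \cite{Y2}) gives
\begin{equation*}
|\nabla \ell_{(x,t)}|^{2}=-H+\frac{\ell_{(x,t)}}{\tau}-\frac{K}{\tau^{3/2}},\qquad -\partial_{s}\ell_{(x,t)}=H-\frac{\ell_{(x,t)}}{\tau}+\frac{K}{2\tau^{3/2}},
\end{equation*}
while the second variation, estimated along $\gamma$ using $\mathcal{D}(V)\geq 0$ (\cite[Lemma 5.3]{M1}), yields the Laplacian comparison
\begin{equation*}
\Delta \ell_{(x,t)}\leq -H+\frac{n}{2\tau}-\frac{K}{2\tau^{3/2}}.
\end{equation*}

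Second, I would insert $\overline{L}_{(x,t)}=4\tau\,\ell_{(x,t)}$ and add. Using $-\partial_{s}(4\tau)=4$, $\Delta(4\tau\,\ell_{(x,t)})=4\tau\,\Delta\ell_{(x,t)}$, and then the Laplacian bound above,
\begin{equation*}
(-\partial_{s}+\Delta)\,\overline{L}_{(x,t)}=4\tau H+\frac{2K}{\sqrt{\tau}}+4\tau\,\Delta\ell_{(x,t)}\leq 4\tau H+\frac{2K}{\sqrt{\tau}}-4\tau H+2n-\frac{2K}{\sqrt{\tau}}=2n,
\end{equation*}
the $H$-terms and the $K$-terms cancelling; so the claimed inequality holds in the usual sense wherever $\overline{L}_{(x,t)}$ is smooth. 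To upgrade this at an arbitrary $(y_{0},s_{0})$, I would pick any minimizing $L$-geodesic $\gamma$ from $(x,t)$ to $(y_{0},s_{0})$ and build a smooth upper support function $\phi\geq\overline{L}_{(x,t)}$ near $(y_{0},s_{0})$ with $\phi(y_{0},s_{0})=\overline{L}_{(x,t)}(y_{0},s_{0})$, namely the value of the (rescaled) $L$-length along a family of $L$-geodesics emanating from $(x,t)$ that depends smoothly on the endpoint and agrees with $\gamma$ at $(y_{0},s_{0})$; admissibility of these competitor curves forces $\phi\geq\overline{L}_{(x,t)}$, and since $\phi$ is built from the same variation data along $\gamma$, the computation of the previous step applies verbatim to $\phi$ and gives $(-\partial_{s}+\Delta)\phi(y_{0},s_{0})\leq 2n$. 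This is exactly the assertion in the barrier sense.

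The substantive step is the first one: the second-variation computation for a general geometric flow, in which the curvature, $h$- and $\Div h$-terms must be organized into $\mathcal{D}(\dot\gamma)$ so that $\mathcal{D}\geq0$ can be used to discard them. This is the Müller-type generalization of Perelman's calculation and is precisely what we quote from \cite{M1} (with \cite{H}, \cite{Y1}, \cite{Y2} as alternatives); everything else is the one-line arithmetic of the second step and the standard cut-locus barrier construction, which is the reason the inequality can only be asserted in the barrier sense.
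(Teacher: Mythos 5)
Your proof is correct and reproduces the Perelman--Müller derivation that the paper itself does not spell out but simply cites from \cite{M1}, \cite{H}, \cite{Y1}, \cite{Y2}. You correctly identify the three ingredients (first-variation identities for $|\nabla\ell|^{2}$ and $\partial_{s}\ell$, the second-variation Laplacian estimate for $\ell$ where $\mathcal{D}\geq 0$ enters, and the standard cut-locus barrier construction), and the arithmetic in which the $H$- and $K$-terms cancel to leave exactly $2n$ is the standard computation.
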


For the Ricci flow,
this estimate \eqref{eq:reduced heat estimate} together with the maximum principle leads us to the uniform upper bound of the minimum of the reduced distance (see \cite{P}, \cite[Lemmas 7.48, 7.50]{CCGG1}).
By the same argument,
Proposition \ref{prop:reduced heat estimate} gives us the following:
\begin{prop}\label{prop:reduced estimate}
Assume that
$(M,g(t))_{t \in [0,T)}$ satisfies $\mathcal{D}(V)\geq 0$ for all vector fields $V$.
Then for all $x\in M$ and $s,t\in [0,T)$ with $s<t$ we have
\begin{equation*}
\min_{y\in M}  \ell_{(x,t)}(y,s) \leq \frac{n}{2}.
\end{equation*}
\end{prop}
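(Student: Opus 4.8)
The plan is to transplant Perelman's classical argument for the Ricci flow (as in \cite[Lemmas 7.48, 7.50]{CCGG1}) to the present setting, the point being simply that every ingredient survives under the hypothesis $\mathcal{D}(V)\geq 0$. Fix $x\in M$ and $t\in(0,T)$, and work with the backward time $\tau:=t-s\in(0,t]$. Put
\[
\phi(\tau):=\min_{y\in M}\overline{L}_{(x,t)}(y,t-\tau)=4\tau\,\min_{y\in M}\ell_{(x,t)}(y,t-\tau).
\]
Since $M$ is compact and $\ell_{(x,t)}$ (hence $\overline{L}_{(x,t)}$) is continuous, the minimum is attained and $\phi$ is a continuous function of $\tau$ on $(0,t]$. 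The desired inequality is equivalent to $\phi(\tau)\leq 2n\tau$ for all $\tau$.

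First I would record the boundary behavior $\phi(\tau)\to 0$ as $\tau\searrow 0$. Because $t<T$, the function $H$ is continuous on the compact set $M\times[0,t]$, so $H_{*}\leq H\leq H^{*}$ there for finite constants $H_{*},H^{*}$. Testing the definition of $L_{(x,t)}$ with the constant curve $\gamma\equiv x$ gives an upper bound, while replacing $H$ by $H_{*}$ inside the infimum gives a lower bound; one obtains $\tfrac{1}{3}H_{*}\,\tau\leq\min_{y\in M}\ell_{(x,t)}(y,t-\tau)\leq\tfrac{1}{3}H^{*}\,\tau$, so that $\phi(\tau)=O(\tau^{2})\to 0$.

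The core is then a maximum principle fed by the barrier inequality \eqref{eq:reduced heat estimate}. Fix $\tau_{0}\in(0,t)$, let $y_{0}$ minimize $\overline{L}_{(x,t)}(\cdot,t-\tau_{0})$ on $M$, and for $\varepsilon>0$ choose a smooth upper barrier $\psi$ at $(y_{0},t-\tau_{0})$, i.e. $\psi\geq\overline{L}_{(x,t)}$ near $(y_{0},t-\tau_{0})$ with equality at that point and $(-\partial_{s}+\Delta)\psi\leq 2n+\varepsilon$ there, as provided by Proposition \ref{prop:reduced heat estimate}. On the slice $s=t-\tau_{0}$ the point $y_{0}$ is also a spatial minimum of $\psi$, hence $\Delta\psi\geq 0$ at $(y_{0},t-\tau_{0})$, and therefore $\partial_{\tau}\psi(y_{0},\cdot)=-\partial_{s}\psi(y_{0},\cdot)\leq 2n+\varepsilon$ at $\tau=\tau_{0}$. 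Comparing $\phi(\tau)\leq\overline{L}_{(x,t)}(y_{0},t-\tau)\leq\psi(y_{0},t-\tau)$ for $\tau>\tau_{0}$ close to $\tau_{0}$, with equality at $\tau_{0}$, yields for the forward Dini derivative $D_{+}\phi(\tau_{0})\leq 2n+\varepsilon$, hence $D_{+}\phi(\tau_{0})\leq 2n$ for every $\tau_{0}\in(0,t)$. A standard real-variable lemma then shows $\tau\mapsto\phi(\tau)-2n\tau$ is non-increasing on $(0,t]$; combined with $\phi(\tau)\to 0$ as $\tau\searrow 0$ this gives $\phi(\tau)\leq 2n\tau$, i.e. $\min_{y\in M}\ell_{(x,t)}(y,s)\leq n/2$.

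The main obstacle is the maximum-principle step: $\overline{L}_{(x,t)}$ is only locally Lipschitz, so one must argue with upper support functions and check that the spatial minimum is inherited by the barrier and that the one-sided comparison of $\phi$ with $\psi(y_{0},\cdot)$ is valid on a genuine one-sided neighborhood of $\tau_{0}$. The boundary estimate and the Dini-derivative monotonicity lemma are routine by comparison.
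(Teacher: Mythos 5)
Your proof is correct and takes precisely the approach the paper merely cites by reference: the maximum-principle argument of Perelman (as in \cite[Lemmas 7.48, 7.50]{CCGG1}), fed by the barrier inequality of Proposition \ref{prop:reduced heat estimate}. You have filled in the details the paper leaves implicit — the $O(\tau^2)$ boundary estimate for $\phi$, the upper-barrier/Dini-derivative handling of the locally Lipschitz function $\overline{L}_{(x,t)}$ at a spatial minimizer, and the real-variable monotonicity lemma — and each of these steps is sound.
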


We also mention the following relation between the reduced distance and heat kernel (see \cite[Lemma 2.4]{CGT}):
\begin{prop}[\cite{CGT}]\label{prop:reduced kernel estimate}
Assume that
$(M,g(t))_{t \in [0,T)}$ satisfies $\mathcal{D}(V)\geq 0$ for all vector fields $V$.
Then for all $x,y\in M$ and $s,t\in [0,T)$ with $s<t$ we have
\begin{equation*}
G(x,t;y,s)\geq \frac{1}{(4\pi (t-s))^{n/2}} \exp\left(- \ell_{(x,t)}(y,s)\right).
\end{equation*}
\end{prop}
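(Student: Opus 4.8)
The plan is to prove the estimate by a parabolic comparison (maximum) principle, following Perelman's derivation of the Gaussian heat kernel lower bound under Ricci flow \cite{P} (see also \cite{CGT}). Fix the space--time point $(x,t)$ and, for $s\in[0,t)$, set $\tau:=t-s$ and
\[
u(y,s):=G(x,t;y,s),\qquad v(y,s):=\frac{1}{(4\pi\tau)^{n/2}}\exp\bigl(-\ell_{(x,t)}(y,s)\bigr).
\]
By the definition of the heat kernel, $u$ is a positive solution of the conjugate heat equation $(-\partial_s-\Delta_y+H)u=0$ with $u(\cdot,s)\to\delta_x$ as $s\nearrow t$. Thus the assertion follows once we show $u\ge v$ on $M\times[0,t)$, which in turn reduces to: \emph{(i)} $v$ is a subsolution of the conjugate heat equation in the barrier sense, $(-\partial_s-\Delta_y+H)v\le 0$; and \emph{(ii)} $v$ has initial data no larger than $\delta_x$, i.e.\ $\limsup_{s\nearrow t}\int_M v(\cdot,s)\,\varphi\,dm_s\le\varphi(x)$ for every continuous $\varphi\ge 0$.

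For \emph{(i)}, a direct differentiation of the explicit formula for $v$ gives, wherever $\ell_{(x,t)}(\cdot,s)$ is smooth,
\[
(-\partial_s-\Delta_y+H)v \;=\; v\cdot\Bigl[\,\Delta_y\ell_{(x,t)}-\partial_\tau\ell_{(x,t)}-|\nabla_y\ell_{(x,t)}|^2+H-\frac{n}{2\tau}\,\Bigr],
\]
where $\partial_\tau=-\partial_s$ denotes the $\tau$-derivative with $y$ held fixed. Hence \emph{(i)} is equivalent to the Perelman-type inequality
\[
\Delta_y\ell_{(x,t)}-\partial_\tau\ell_{(x,t)}-|\nabla_y\ell_{(x,t)}|^2+H\;\le\;\frac{n}{2\tau},
\]
which is precisely the differential inequality behind the monotonicity of the $\mathcal{L}$-reduced volume. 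It is obtained by combining Proposition \ref{prop:reduced heat estimate}, the Laplacian-type fundamental inequality $(-\partial_s+\Delta)\overline{L}_{(x,t)}\le 2n$, with its companion gradient- (Hamilton-) type fundamental inequality --- both valid under $\mathcal{D}(V)\ge 0$ by \cite{M1} --- after rewriting everything via the relation $\overline{L}_{(x,t)}=4\tau\,\ell_{(x,t)}$ (the flat case $H\equiv 0$, $\ell_{(x,t)}(y,s)=d_t(x,y)^2/(4\tau)$, where equality holds throughout, serves as a guide to the bookkeeping). Since $\ell_{(x,t)}(\cdot,s)$ is only locally Lipschitz --- smooth and semiconcave away from the $\mathcal{L}$-cut locus --- these inequalities, and therefore the subsolution property, are to be read in the barrier sense.

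For \emph{(ii)}, one uses the near-diagonal behaviour $\tau\,\ell_{(x,t)}(y,s)\to\tfrac14\,d_t(x,y)^2$ as $s\nearrow t$, locally uniformly in $y$ (the contribution of $H$ to the $\mathcal{L}$-length is $O(\tau^{3/2})$, hence negligible after dividing by $\sqrt\tau$); thus $v(\cdot,s)$ is asymptotic to the Euclidean heat kernel of $g(t)$ based at $x$ and concentrates at $x$ with total mass tending to $1$, which yields \emph{(ii)}. Finally, put $w:=u-v$. Then $w$ is a supersolution of the conjugate heat equation in the barrier sense with initial data $\ge 0$, and the conjugate heat equation is forward-parabolic in $\tau=t-s$. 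Applying the minimum principle on $M\times\{0\le\tau\le t-s_0\}$ for each $s_0\in(0,t)$ --- where $H$ is bounded below, e.g.\ by Proposition \ref{prop:lowerS}, so the zeroth-order term causes no trouble, and Calabi's trick handles the barrier-sense inequality --- gives $w\ge 0$ for $s\in[s_0,t)$, and letting $s_0\searrow 0$ together with continuity in $s$ yields $u\ge v$ on all of $M\times[0,t)$, i.e.\ the asserted inequality.

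The main obstacle is Step \emph{(i)}: expressing the bracket above as a manifestly non-positive combination of the two fundamental inequalities of \cite{M1} with the correct coefficients and signs, and justifying this rigorously in the barrier sense across the $\mathcal{L}$-cut locus. The remaining ingredients --- the near-diagonal asymptotics of $\ell_{(x,t)}$ and the comparison principle --- are standard.
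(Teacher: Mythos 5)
The paper does not prove Proposition~\ref{prop:reduced kernel estimate}; it is quoted directly from \cite[Lemma~2.4]{CGT}. So there is no in-paper proof to compare against, and the question is whether your outline reproduces the CGT/Perelman argument correctly. It does: the strategy --- set $v=(4\pi\tau)^{-n/2}e^{-\ell_{(x,t)}}$, show that $v$ is a barrier subsolution of the conjugate heat equation, check that $v(\cdot,s)\to\delta_x$ with total mass $\to 1$ as $\tau\to 0$, and then conclude $G\ge v$ by the parabolic comparison principle in the $\tau$-variable --- is exactly how Perelman's heat-kernel lower bound is proved and how \cite{CGT} carries it over to flows with $\mathcal D(V)\ge 0$. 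Your reduction of the subsolution property to $\Delta_y\ell-\partial_\tau\ell-|\nabla_y\ell|^2+H\le n/(2\tau)$ is computed with the correct signs, and the treatment of the $\mathcal L$-cut locus via barriers/Calabi's trick, the near-diagonal asymptotics $\tau\ell\to d_t(x,\cdot)^2/4$, and the final minimum-principle step are all standard and sound.

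The gap you flag at the end is the real one, and it is worth being more precise about it. Proposition~\ref{prop:reduced heat estimate} (the inequality $(-\partial_s+\Delta)\overline L_{(x,t)}\le 2n$) by itself is \emph{not} enough to close step~\emph{(i)}. Rewritten in terms of $\ell$ it gives $\partial_\tau\ell+\Delta\ell+\ell/\tau\le n/(2\tau)$, but the bracket you must bound involves $-\partial_\tau\ell-|\nabla\ell|^2$, and the difference $2\partial_\tau\ell+|\nabla\ell|^2+\ell/\tau-H$ that must then be shown non-negative is in fact identically zero --- \emph{but only because of the first-variation identities} for $\ell$ (Perelman's (7.5)--(7.6), generalized in \cite[Lemma~5.2]{M1} to flows with $\mathcal D\ge 0$), which express $\partial_\tau\ell$ and $|\nabla\ell|^2$ in terms of $\ell$, $H$, and the $\mathcal L$-Jacobian term. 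These identities are not recorded in the present paper, so the step genuinely requires importing them from \cite{M1} (or simply citing \cite[Lemma~2.4]{CGT}, where precisely this combination is carried out). Once that is supplied, the rest of your outline goes through as written.
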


%%%%%%%%%%%%%%%%%%%%%%%%%%%%
\subsection{Zhang type gradient estimates}\label{sec:Gradient estimates}

For the super Ricci flow,
one can derive the following gradient estimate of Zhang type (see \cite[Theorem 3.1]{Z1}, \cite[Lemma 6.5]{FZ}):
\begin{thm}[\cite{FZ}]\label{thm:Zhang}
Let $(M,g(t))_{t\in [0,T)}$ be a super Ricci flow.
For $t_0 \in [0,T)$,
let $u \in C^\infty (M \times (t_0,T))$ be a positive solution to the heat equation.
For $t_1,t_2\in (t_0,T)$ with $t_1<t_2$,
we assume $u\leq A$ for $A> 0$ on $M\times [t_1,t_2]$.
Then for all $x\in M$ and $t\in (t_1,t_2]$ we have
\begin{equation*}
\frac{|\nabla u|}{u}(x,t)  \leq \frac{1}{\sqrt{t-t_1}}\sqrt{\log \frac{A}{u(x,t)}}.
\end{equation*}
\end{thm}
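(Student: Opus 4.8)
The idea is to adapt the classical Hamilton--Souplet--Zhang argument to the time-dependent metric, the point being that the super Ricci flow condition is precisely what kills the curvature term that appears. Write $\square:=\partial_t-\Delta_{g(t)}$. By taking square roots, it suffices to establish
\begin{equation*}
(t-t_1)\,|\nabla\log u|^2\leq\log\frac{A}{u}\qquad\text{on }M\times(t_1,t_2].
\end{equation*}
So I would set $f:=\log(A/u)$, which is smooth and $\geq 0$ on $M\times[t_1,t_2]$ because $0<u\leq A$ there (and $u$ is already smooth up to $t=t_1$ since $t_1>t_0$). From $\partial_t u=\Delta u$ one gets $\nabla f=-u^{-1}\nabla u$, hence $|\nabla f|^2=|\nabla\log u|^2$, and a short computation gives the evolution $\square f=-|\nabla f|^2$.

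The next step is to differentiate $|\nabla f|^2$, keeping track of the metric's time dependence: since $\partial_t g_{ij}=-2h_{ij}$ we have $\partial_t g^{ij}=2h^{ij}$, so $\partial_t|\nabla f|^2=2h(\nabla f,\nabla f)+2\langle\nabla f,\nabla\partial_t f\rangle$. Feeding in $\partial_t f=\Delta f-|\nabla f|^2$ together with the Bochner formula for $g(t)$, namely $\Delta|\nabla f|^2=2|\Hess f|^2+2\langle\nabla f,\nabla\Delta f\rangle+2\Ric(\nabla f,\nabla f)$, collapses this to
\begin{equation*}
\square|\nabla f|^2=-2|\Hess f|^2+2(h-\Ric)(\nabla f,\nabla f)-2\langle\nabla f,\nabla|\nabla f|^2\rangle .
\end{equation*}
Now the hypothesis $\partial_t g\geq-2\Ric$, i.e.\ $h\leq\Ric$ as quadratic forms, forces $(h-\Ric)(\nabla f,\nabla f)\leq 0$, and we are left with $\square|\nabla f|^2\leq-2|\Hess f|^2-2\langle\nabla f,\nabla|\nabla f|^2\rangle$.

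Finally I would run the maximum principle on $P:=(t-t_1)|\nabla f|^2-f$ over $M\times[t_1,t_2]$. Using $\square f=-|\nabla f|^2$ and the last inequality, $\square P=2|\nabla f|^2+(t-t_1)\square|\nabla f|^2\leq 2|\nabla f|^2-2(t-t_1)|\Hess f|^2-2(t-t_1)\langle\nabla f,\nabla|\nabla f|^2\rangle$. Rewriting $(t-t_1)\nabla|\nabla f|^2=\nabla P+\nabla f$ makes the two $|\nabla f|^2$ terms cancel, leaving
\begin{equation*}
\square P+2\langle\nabla f,\nabla P\rangle\leq-2(t-t_1)|\Hess f|^2\leq 0 .
\end{equation*}
Since $M$ is compact and $P(\cdot,t_1)=-f\leq 0$, Hamilton's trick applied to $t\mapsto\max_M P(\cdot,t)$ (the operator $\partial_t-\Delta+2\langle\nabla f,\cdot\rangle$ has no zeroth-order part and no positive forcing) gives $P\leq 0$ on $M\times[t_1,t_2]$, which is exactly the desired inequality. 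I do not expect a genuinely hard step here: the only real care is the bookkeeping of the $2h(\nabla f,\nabla f)$ term produced by $\partial_t g^{ij}$, which must be paired with the Bochner $\Ric$ term so that the super Ricci flow condition can absorb it, together with the routine remark that $f$ is smooth at $t=t_1$.
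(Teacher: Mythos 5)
Your proof is correct, and it is essentially the standard argument that the paper attributes to Fang--Zheng (\cite[Lemma~6.5]{FZ}, which in turn follows Zhang \cite{Z1}): set $f=\log(A/u)$, compute $\square f=-|\nabla f|^2$ and $\square|\nabla f|^2=-2|\Hess f|^2+2(h-\Ric)(\nabla f,\nabla f)-2\langle\nabla f,\nabla|\nabla f|^2\rangle$, absorb the curvature term via $h\le\Ric$, and run the maximum principle on $P=(t-t_1)|\nabla f|^2-f$. The paper itself does not reproduce the proof (it cites it), but your argument is precisely the expected one and the bookkeeping of the $\partial_t g^{ij}$ term against the Bochner $\Ric$ term is handled correctly.
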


Due to Theorem \ref{thm:Zhang},
we have the following Harnack inequality (see \cite{Z2}, \cite[Lemma 6.5]{FZ}):
\begin{cor}[\cite{FZ}]\label{cor:lemZhang}
Let $(M,g(t))_{t\in [0,T)}$ be a super Ricci flow.
For $t_0 \in [0,T)$,
let $u \in C^\infty (M \times (t_0,T))$ be a positive solution to the heat equation.
For $t_1,t_2\in (t_0,T)$ with $t_1<t_2$,
we assume $u\leq A$ for $A> 0$ on $M\times [t_1,t_2]$.
Then for all $x,y\in M$ we have
\begin{equation*}
u(y,t_2)\leq A^{1/2}\,u(x,t_2)^{1/2}\,\exp \left(\frac{d^2_{t_2}(x,y)}{4(t_2-t_1)}\right).
\end{equation*}
\end{cor}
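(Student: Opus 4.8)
The plan is to integrate the pointwise gradient estimate from Theorem \ref{thm:Zhang} along a shortest path at the final time $t_2$. First I would fix $x,y\in M$ and let $\gamma:[0,1]\to M$ be a minimizing geodesic from $x$ to $y$ with respect to $g(t_2)$, so that $\int_0^1|\dot\gamma(r)|_{g(t_2)}\,dr=d_{t_2}(x,y)$. Consider the function $\varphi(r):=\sqrt{\log(A/u(\gamma(r),t_2))}$, which is well-defined and nonnegative since $0<u\le A$ on $M\times[t_1,t_2]$. The idea is to estimate $|\varphi'(r)|$ using the chain rule: $\varphi'(r)$ involves $\langle\nabla u,\dot\gamma\rangle/(2u\varphi)$, and Theorem \ref{thm:Zhang} (applied at time $t=t_2$, which is allowed since $t_2\in(t_1,t_2]$) gives $|\nabla u|/u\le (t_2-t_1)^{-1/2}\varphi$ at each point $\gamma(r)$. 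Hence $|\varphi'(r)|\le \tfrac12(t_2-t_1)^{-1/2}|\dot\gamma(r)|_{g(t_2)}$, and integrating from $0$ to $1$ yields
\begin{equation*}
\left|\sqrt{\log\frac{A}{u(y,t_2)}}-\sqrt{\log\frac{A}{u(x,t_2)}}\right|\le \frac{d_{t_2}(x,y)}{2\sqrt{t_2-t_1}}.
\end{equation*}
From this, dropping the term at $x$ on one side gives $\sqrt{\log(A/u(y,t_2))}\le \sqrt{\log(A/u(x,t_2))}+d_{t_2}(x,y)/(2\sqrt{t_2-t_1})$, and squaring, then using $(a+b)^2\le 2a^2+2b^2$ — or more directly rearranging — produces
\begin{equation*}
\log\frac{A}{u(y,t_2)}\le \log\frac{A}{u(x,t_2)}+\log\frac{A}{u(x,t_2)}+\frac{d_{t_2}(x,y)^2}{2(t_2-t_1)};
\end{equation*}
wait, more carefully: squaring the inequality directly gives $\log\frac{A}{u(y,t_2)}\le \log\frac{A}{u(x,t_2)}+\sqrt{\log\frac{A}{u(x,t_2)}}\cdot\frac{d_{t_2}(x,y)}{\sqrt{t_2-t_1}}+\frac{d_{t_2}(x,y)^2}{4(t_2-t_1)}$, and this is not quite the clean form claimed, so I would instead bound $\sqrt{\log(A/u(x,t_2))}$ crudely. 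Exponentiating $\log\frac{A}{u(y,t_2)}\le \log\frac{A}{u(x,t_2)}+\frac{d_{t_2}(x,y)}{\sqrt{t_2-t_1}}\sqrt{\log\frac{A}{u(x,t_2)}}+\frac{d_{t_2}(x,y)^2}{4(t_2-t_1)}$ and using that $\sqrt{\log(A/u(x,t_2))}\le \tfrac12(1+\log(A/u(x,t_2)))$ (from $\sqrt{c}\le(1+c)/2$), one collects the $\log(A/u(x,t_2))$ terms to get a bound of the form $\tfrac32\log(A/u(x,t_2))$ plus $\tfrac12\cdot\frac{d_{t_2}(x,y)}{\sqrt{t_2-t_1}}$ plus $\frac{d_{t_2}(x,y)^2}{4(t_2-t_1)}$; this does not give the exact constants $1/2$ and $1/4$ stated, so the cleanest route is the one below.

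The cleaner and correct route: from $\sqrt{\log\frac{A}{u(y,t_2)}}\le \sqrt{\log\frac{A}{u(x,t_2)}}+\frac{d_{t_2}(x,y)}{2\sqrt{t_2-t_1}}$, apply the elementary inequality $(p+q)^2\le 2p^2+2q^2$ with $p=\sqrt{\log\frac{A}{u(x,t_2)}}$ and $q=\frac{d_{t_2}(x,y)}{2\sqrt{t_2-t_1}}$ to obtain
\begin{equation*}
\log\frac{A}{u(y,t_2)}\le 2\log\frac{A}{u(x,t_2)}+\frac{d_{t_2}(x,y)^2}{2(t_2-t_1)}.
\end{equation*}
Exponentiating gives $\frac{A}{u(y,t_2)}\le \frac{A^2}{u(x,t_2)^2}\exp\!\big(\frac{d_{t_2}(x,y)^2}{2(t_2-t_1)}\big)$, i.e. $u(y,t_2)\le A^{-1}u(x,t_2)^2\exp\!\big(-\frac{d_{t_2}(x,y)^2}{2(t_2-t_1)}\big)$ — hmm, this has the wrong sign and powers compared to the claim. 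The correct manipulation must instead bound $u(y,t_2)$ from above by $u(x,t_2)$ to a \emph{positive} power, which forces keeping $-\log u(y,t_2)\le -\tfrac12\log u(x,t_2)+\cdots$; this comes from splitting the $2\log\frac{A}{u(x,t_2)} = 2\log A - 2\log u(x,t_2)$ differently. The honest statement of the key step is: write $f(z):=\log\frac{A}{u(z,t_2)}\ge 0$; the gradient bound gives $\sqrt{f(y)}\le\sqrt{f(x)}+\frac{d_{t_2}(x,y)}{2\sqrt{t_2-t_1}}$ along the geodesic, hence $f(y)\le \big(\sqrt{f(x)}+\frac{d_{t_2}(x,y)}{2\sqrt{t_2-t_1}}\big)^2$. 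I expect the stated inequality to follow by taking square roots of $u$ appropriately: since the claim is $u(y,t_2)\le A^{1/2}u(x,t_2)^{1/2}\exp\!\big(\frac{d_{t_2}(x,y)^2}{4(t_2-t_1)}\big)$, equivalently $\tfrac12 f(x)\le f(y)+\log A - \tfrac12\log A +\cdots$...

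\textbf{The main obstacle} is reconciling the squared-gradient-estimate geometry with the precise constants; the robust argument that surely works is: along the geodesic one integrates $\frac{d}{dr}\sqrt{f(\gamma(r))}$, but near points where $f=0$ one must argue by a limiting/regularization procedure since $\sqrt{f}$ is only Lipschitz there — this is the genuine technical point, handled by noting $|\nabla\sqrt{f}|=\frac{|\nabla f|}{2\sqrt f}=\frac{|\nabla u|}{2u\sqrt f}\le \frac{1}{2\sqrt{t_2-t_1}}$ a.e. by Theorem \ref{thm:Zhang}, so $\sqrt f$ is globally $\frac{1}{2\sqrt{t_2-t_1}}$-Lipschitz in the $g(t_2)$-metric, giving $|\sqrt{f(x)}-\sqrt{f(y)}|\le \frac{d_{t_2}(x,y)}{2\sqrt{t_2-t_1}}$ with no regularity caveat. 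Then with $a:=\sqrt{f(x)}$, $a\ge \sqrt{f(y)}-\frac{d_{t_2}(x,y)}{2\sqrt{t_2-t_1}}$; I would then use $f(y)\le 2f(x)+\frac{d_{t_2}(x,y)^2}{2(t_2-t_1)}$ and exponentiate, and if the exact constants $A^{1/2},u^{1/2}$ in the statement come from instead bounding $\sqrt{f(y)}\le \sqrt{f(x)}+\frac{d}{2\sqrt{t_2-t_1}}$ and then using $f(y)= \log A - \log u(y,t_2)$ to isolate $\log u(y,t_2)\ge \tfrac12\log u(x,t_2)+\tfrac12\log A - \frac{d^2}{4(t_2-t_1)}$ — wait, that gives a lower bound on $u(y,t_2)$. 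Given the direction of the claimed inequality, the corollary as stated is exactly the Harnack bound of Zhang, and the proof is: integrate the Lipschitz bound for $\sqrt{f}$, square, and exponentiate, with the factor $1/2$ on $u(x,t_2)$ and $A$ arising from the Cauchy--Schwarz split $(\sqrt{f(x)}+c)^2\le f(x)+ (\text{abs. const.})$ — I will follow Fang--Zheng \cite{FZ} and Zhang \cite{Z2} for the exact bookkeeping, the content being entirely the Lipschitz-in-$g(t_2)$ property of $\sqrt{\log(A/u)}$.
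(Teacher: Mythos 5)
Your overall strategy is correct and is exactly what the paper (following Zhang \cite{Z2} and Fang--Zheng \cite{FZ}) intends: set $f(z):=\log\bigl(A/u(z,t_2)\bigr)\ge 0$, observe from Theorem~\ref{thm:Zhang} at $t=t_2$ that $|\nabla\sqrt{f}|\le \tfrac12(t_2-t_1)^{-1/2}$ in the $g(t_2)$-metric, and integrate along a minimizing $g(t_2)$-geodesic to get
\begin{equation*}
\left|\sqrt{f(x)}-\sqrt{f(y)}\right|\le \frac{d_{t_2}(x,y)}{2\sqrt{t_2-t_1}}.
\end{equation*}
Your handling of the regularity issue at zeros of $f$ (Lipschitz property of $\sqrt{f}$, a.e.\ gradient bound) is also sound.

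The gap is in the final bookkeeping: you keep applying the Lipschitz estimate in the direction $\sqrt{f(y)}\le\sqrt{f(x)}+\cdots$ and then squaring, which produces an \emph{upper} bound on $f(y)$ in terms of $f(x)$, hence a \emph{lower} bound on $u(y,t_2)$; you notice the resulting sign is wrong but never correct the orientation, and the split $(\sqrt{f(x)}+c)^2\le f(x)+\text{const}$ you suggest at the end is false. The Lipschitz estimate is symmetric, so use it the other way. From $\sqrt{f(x)}\le\sqrt{f(y)}+\tfrac{d_{t_2}(x,y)}{2\sqrt{t_2-t_1}}$ and $(a+b)^2\le 2a^2+2b^2$ you get $f(x)\le 2f(y)+\tfrac{d_{t_2}(x,y)^2}{2(t_2-t_1)}$, i.e.
\begin{equation*}
\log\frac{A}{u(x,t_2)}\le 2\log\frac{A}{u(y,t_2)}+\frac{d_{t_2}(x,y)^2}{2(t_2-t_1)}.
\end{equation*}
Exponentiating gives $A/u(x,t_2)\le \bigl(A/u(y,t_2)\bigr)^2\exp\bigl(\tfrac{d_{t_2}(x,y)^2}{2(t_2-t_1)}\bigr)$, i.e.\ $u(y,t_2)^2\le A\,u(x,t_2)\exp\bigl(\tfrac{d_{t_2}(x,y)^2}{2(t_2-t_1)}\bigr)$, and taking square roots yields exactly the stated inequality. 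So the missing idea is not a new estimate but simply bounding $f(x)$ in terms of $f(y)$ (rather than the reverse), which is what makes the exponents $A^{1/2}$ and $u(x,t_2)^{1/2}$ and the denominator $4(t_2-t_1)$ come out right.
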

%\begin{proof}
%In view of Theorem \ref{thm:Zhang},
%for every $x\in M$ we have
%\begin{equation*}
%\left|\nabla \sqrt{\log\frac{A}{u}}\right|(x,t_2)=\frac{1}{2}\frac{1}{ \sqrt{ \log\cfrac{A}{u(x,t_2)}} } \frac{|\nabla u|}{u}(x,t_2)\leq \frac{1}{2\sqrt{t_2-t_1}}.
%\end{equation*}
%It follows that
%for all $x,y\in M$ we have
%\begin{equation*}
%\sqrt{\log\frac{A}{u(x,t_2)}}\leq \sqrt{\log\frac{A}{u(y,t_2)}}+\frac{d_{t_2}(x,y)}{2\sqrt{t_2-t_1}}.
%\end{equation*}
%We conclude
%\begin{equation*}
%\log\frac{A}{u(x,t_2)} \leq \log\frac{A}{u(y,t_2)}+\frac{d_{t_2}(x,y)^2}{4(t_2-t_1)}+\sqrt{\log\frac{A}{u(y,t_2)}}\frac{d_{t_2}(x,y)}{\sqrt{t_2-t_1}}\leq 2\log\frac{A}{u(y,t_2)}+\frac{d_t(x,y)^2}{2(t_2-t_1)}.
%\end{equation*}
%We obtain the desired assertion.
%\end{proof}

Using Propositions \ref{prop:upper heat kernel}, \ref{prop:reduced kernel estimate} and Corollary \ref{cor:lemZhang},
Fang-Zheng \cite{FZ} have obtained the following lower Gaussian heat kernel estimate (see \cite{Z2}, \cite[Lemma 6.6]{FZ}):
\begin{thm}[\cite{FZ}]\label{thm:lowerGauss}
Let $(M,g(t))_{t\in [0,T)}$ be a super Ricci flow satisfying $\mathcal{D}(V)\geq 0$ for all vector fields $V$.
For $s,t \in [0,T)$ with $s< t$,
and for $H_1> 0$,
we assume $H\leq H_1$ on $M\times [s,t]$.
Then for all $x,y\in M$ we have
\begin{equation*}
G(x,t; y,s) \geq \frac{C}{(t-s)^{n/2}} \exp \bigg( {- \frac{ d^2_t(x,y)}{t-s}} \bigg),
\end{equation*}
where $C>0$ is a positive constant depending only on $n,T,g(0)$ and $(t-s)H_1$.
\end{thm}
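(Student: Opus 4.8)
The plan is to run the classical Li--Yau scheme: first establish an on-diagonal lower bound $G(y,t;y,s)\gtrsim (t-s)^{-n/2}$, and then spread it to all of $M$ using the parabolic Harnack inequality of Corollary~\ref{cor:lemZhang}. Throughout I fix the pair $(y,s)$ and set $u(z,\tau):=G(z,\tau;y,s)$, a positive solution of the heat equation on $M\times(s,T)$.

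For the on-diagonal bound I would apply Proposition~\ref{prop:reduced kernel estimate} along the space-diagonal (with $x$ replaced by $y$), obtaining
\[
u(y,t)=G(y,t;y,s)\geq \frac{1}{(4\pi(t-s))^{n/2}}\exp\!\bigl(-\ell_{(y,t)}(y,s)\bigr).
\]
To bound $\ell_{(y,t)}(y,s)$ from above it suffices to use the constant curve $\gamma\equiv y$ as a competitor in the infimum defining $L_{(y,t)}(y,s)$: since $|\dot\gamma|\equiv 0$ and $H\leq H_1$ on $M\times[s,t]$,
\[
L_{(y,t)}(y,s)\leq \int_s^t\sqrt{t-\xi}\,H(y,\xi)\,d\xi\leq H_1\int_s^t\sqrt{t-\xi}\,d\xi=\tfrac{2}{3}H_1(t-s)^{3/2},
\]
hence $\ell_{(y,t)}(y,s)\leq\tfrac13 H_1(t-s)$ and
\[
u(y,t)\geq \frac{c_0}{(t-s)^{n/2}},\qquad c_0:=(4\pi)^{-n/2}\,e^{-\frac13 H_1(t-s)},
\]
a constant depending only on $n$ and on the scale-invariant quantity $(t-s)H_1$.

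Next I would choose the intermediate time $t_1:=(s+t)/2$ and apply Corollary~\ref{cor:lemZhang} on $M\times[t_1,t]$. By Proposition~\ref{prop:upper heat kernel}, on this time slab $u(z,\tau)\leq C(\tau-s)^{-n/2}\leq 2^{n/2}C(t-s)^{-n/2}=:A$ with $C=C(n,T,g(0))$, so the hypothesis of the corollary is met. Its conclusion, evaluated at the space points $y$ and $x$ at time $t$ and using $t-t_1=(t-s)/2$, gives
\[
u(y,t)\leq A^{1/2}\,u(x,t)^{1/2}\,\exp\!\left(\frac{d_t(x,y)^2}{4(t-t_1)}\right)=A^{1/2}\,u(x,t)^{1/2}\,\exp\!\left(\frac{d_t(x,y)^2}{2(t-s)}\right).
\]
Solving for $u(x,t)=G(x,t;y,s)$ and inserting the on-diagonal bound,
\[
G(x,t;y,s)\geq \frac{u(y,t)^2}{A}\,\exp\!\left(-\frac{d_t(x,y)^2}{t-s}\right)\geq \frac{c_0^2}{2^{n/2}C}\cdot\frac{1}{(t-s)^{n/2}}\,\exp\!\left(-\frac{d_t(x,y)^2}{t-s}\right),
\]
and the constant $c_0^2/(2^{n/2}C)$ depends only on $n,T,g(0)$ and $(t-s)H_1$, which is the assertion.

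I do not expect a genuine obstacle; the one step to get exactly right is the on-diagonal estimate. The entire dependence of the final constant on $(t-s)H_1$ --- and the fact that nothing more is needed, in particular no lower bound on $H$ and no curvature bound --- enters precisely because the constant curve is admissible in the $L$-functional and its $L$-length is controlled using only $H\leq H_1$. The particular choice $t_1=(s+t)/2$ is made so that the Gaussian exponent comes out with coefficient exactly $1$; any proportion $t_1=s+\theta(t-s)$, $\theta\in(0,1)$, works equally well, affecting only the constant in the exponent.
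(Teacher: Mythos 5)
Your argument is correct and uses exactly the three ingredients the paper says Fang--Zheng combine to obtain this estimate (Propositions~\ref{prop:upper heat kernel}, \ref{prop:reduced kernel estimate} and Corollary~\ref{cor:lemZhang}); since the paper itself does not spell out the proof but merely cites \cite{FZ}, your write-up fills in the details in the intended way. The one step worth emphasizing is your direct bound $\ell_{(y,t)}(y,s)\le\frac13 H_1(t-s)$ via the constant competitor curve — this is what replaces the somewhat less convenient Proposition~\ref{prop:reduced estimate} (which only controls $\min_{z}\ell_{(x,t)}(z,s)$ at an \emph{unknown} point $z$) and lets the Harnack step in Corollary~\ref{cor:lemZhang} be applied with the same spatial base point $y$ on both sides; it also makes the dependence of the final constant on $(t-s)H_1$ transparent.
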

%\begin{proof}
%Applying Corollary \ref{cor:lemZhang} to
%\begin{equation*}
%t_0=s,\quad  t_1=\frac{s+t}{2},\quad t_2=t,\quad u=G(\cdot,\cdot;y,s),\quad A=\sup_{M\times \left[\frac{s+t}{2},t\right]}u,
%\end{equation*}
%we see
%\begin{equation*}
%G(y,t;y,s)\leq A^{1/2}\,G(x,t;y,s)^{1/2}\,\exp\left(\frac{d_t(x,y)^2}{2(t-s)}\right),
%\end{equation*}
%and hence
%\begin{equation}\label{eq:lowerGauss1}
%G(x,t;y,s)\geq A^{-1}\,G(y,t;y,s)^2\,\exp\left(-\frac{d_t(x,y)^2}{t-s}\right).
%\end{equation}
%
%Proposition \ref{prop:upper heat kernel} tells us that
%for all $\xi \in [(s+t)/2,t]$ we have
%\begin{equation*}
%G(x,\xi;y,s)\leq \frac{C_1}{(\xi-s)^{n/2}}\leq \frac{C_2}{(t-s)^{n/2}}.
%\end{equation*}
%In particular,
%\begin{equation}\label{eq:lowerGauss2}
%A\leq \frac{C_2}{(t-s)^{n/2}}.
%\end{equation}
%On the other hand,
%Proposition \ref{prop:reduced kernel estimate} implies
%\begin{align}\label{eq:lowerGauss3}
%G(y,t;y,s)&\geq \frac{1}{(4\pi (t-s))^{n/2}} \exp\left( - \ell_{(y,s)}(y,t)  \right)\\ \notag
%&\geq \frac{1}{(4\pi (t-s))^{n/2}} \exp \left({-\frac{1}{2\sqrt{t-s}}\int^{t}_{s}\,\sqrt{t-\xi}H(y,\xi)\,d\xi}\right)\\ \notag
%&\geq \frac{1}{(4\pi (t-s))^{n/2}} \exp \left(-{\frac{(t-s)H_1}{3}}\right).
%\end{align}
%Combining \eqref{eq:lowerGauss1}, \eqref{eq:lowerGauss2}, \eqref{eq:lowerGauss3},
%we arrive at the desired estimate.
%\end{proof}

From Propositions \ref{prop:upper heat kernel}, \ref{prop:reduced kernel estimate} and Corollary \ref{cor:lemZhang},
they also concluded the following $\kappa$-noninflating theorem of Zhang type (see \cite[Theorem 1.1]{Z2}, \cite[Theorem 6.1]{FZ}):
\begin{thm}[\cite{FZ}]\label{thm:inflat}
Let $(M,g(t))_{t\in [0,T)}$ be a super Ricci flow satisfying $\mathcal{D}(V)\geq 0$ for all vector fields $V$.
For $x_0\in M, t_0\in (0,T),r_0\in (0,\sqrt{t_0})$ and $A>0$,
we assume $H\leq A/(t_0-t)$ on $B(x_0,r_0,t_0)\times [t_0-r^2_0,t_0]$.
Then we have
\begin{equation*}
 m( B(x_0,r_0,t_0)) \leq \kappa\, r^n_0,
\end{equation*}
where $\kappa>0$ is a positive constant depending only on $n,T,g(0)$ and $A$.
\end{thm}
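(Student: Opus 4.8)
The plan is to adapt Zhang's proof of the $\kappa$-noninflating estimate. Roughly: use the reduced distance to produce a point at which a suitably based heat kernel already takes value $\gtrsim(t_0-s)^{-n/2}$ at $(x_0,t_0)$ for a well-chosen $s<t_0$; spread this lower bound over all of $B(x_0,r_0,t_0)$ by a parabolic Harnack inequality; and then turn it into an upper bound on $m_{t_0}\big(B(x_0,r_0,t_0)\big)$ by conservation of heat mass.

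Concretely, I would fix $s:=t_0-r_0^2/2$ and put $\tau_0:=t_0-s=r_0^2/2$; since $r_0^2<t_0$ this gives $t_0/2\le s<t_0$, so $s>0$ and $t_0/s\le 2$. By Proposition \ref{prop:reduced estimate} pick $y_\ast\in M$ with $\ell_{(x_0,t_0)}(y_\ast,s)\le n/2$; Proposition \ref{prop:reduced kernel estimate} then yields
\[
G(x_0,t_0;y_\ast,s)\ \ge\ \frac{e^{-n/2}}{(4\pi\tau_0)^{n/2}}\ =\ \frac{c_1}{r_0^{\,n}},\qquad c_1=c_1(n).
\]
Next, view $u:=G(\cdot,\cdot;y_\ast,s)$ as a positive solution of the heat equation on $M\times(s,T)$. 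Proposition \ref{prop:upper heat kernel} gives $u\le C(t-s)^{-n/2}$, hence $u\le A_\ast:=C(\tau_0/2)^{-n/2}=c_2 r_0^{-n}$ on $M\times[t_0-\tfrac{\tau_0}{2},t_0]$, where $c_2=c_2(n,T,g(0))$. Applying Corollary \ref{cor:lemZhang} with $t_1=t_0-\tfrac{\tau_0}{2}$, $t_2=t_0$ and $A=A_\ast$ gives, for every $z\in M$,
\[
G(x_0,t_0;y_\ast,s)\ \le\ A_\ast^{1/2}\,G(z,t_0;y_\ast,s)^{1/2}\exp\!\Big(\frac{d_{t_0}(z,x_0)^2}{2\tau_0}\Big);
\]
for $z\in B(x_0,r_0,t_0)$ one has $d_{t_0}(z,x_0)^2<r_0^2=2\tau_0$, and combining with the lower bound above produces $G(z,t_0;y_\ast,s)\ge c_3 r_0^{-n}$ with $c_3=c_3(n,T,g(0))$.

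To conclude, set $v(t):=\int_M G(z,t;y_\ast,s)\,dm_t(z)$ for $t\in(s,t_0]$. Since $\partial_t(dm_t)=-H\,dm_t$ and $u$ solves the heat equation,
\[
v'(t)\ =\ -\int_M H\,G(z,t;y_\ast,s)\,dm_t(z)\ \le\ \frac{n}{2t}\,v(t)
\]
by Proposition \ref{prop:lowerS}; combined with $\lim_{t\downarrow s}v(t)=1$ this integrates to $v(t_0)\le(t_0/s)^{n/2}\le 2^{n/2}$. Therefore
\[
2^{n/2}\ \ge\ v(t_0)\ \ge\ \int_{B(x_0,r_0,t_0)}G(z,t_0;y_\ast,s)\,dm_{t_0}(z)\ \ge\ \frac{c_3}{r_0^{\,n}}\,m_{t_0}\big(B(x_0,r_0,t_0)\big),
\]
which is the claim with $\kappa:=2^{n/2}/c_3$ (in fact this $\kappa$ depends only on $n,T,g(0)$; the hypothesis $H\le A/(t_0-t)$ is not needed in this argument).

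The only genuinely delicate point I anticipate is the limit $v(t)\to 1$ as $t\downarrow s$, i.e.\ that the heat kernel of a smoothly evolving closed metric conserves total mass to leading order over short times — standard (compare the heat kernel of the frozen metric $g(s)$ via a parametrix, using that $\partial_t g$, hence $H$, is bounded near $(y_\ast,s)$) but worth spelling out. To sidestep it, one could instead run the argument with the conjugate heat kernel $G(x_0,t_0;\cdot,\cdot)$, for which $\int_M G(x_0,t_0;y,s)\,dm_s(y)\equiv 1$ holds exactly; the price is that the lower bound must then be propagated in the pole variable $y$, hence via a Harnack inequality for the conjugate heat equation (with potential $H$) rather than via the cleaner Corollary \ref{cor:lemZhang}.
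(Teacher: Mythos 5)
Your proof is correct and follows exactly the route the paper attributes to Fang--Zheng and Zhang for this result: a heat kernel lower bound at $(x_0,t_0)$ via Propositions \ref{prop:reduced estimate} and \ref{prop:reduced kernel estimate}, the global upper bound of Proposition \ref{prop:upper heat kernel}, the Harnack inequality of Corollary \ref{cor:lemZhang} to spread the lower bound over $B(x_0,r_0,t_0)$, and mass control via $\partial_t\,dm=-H\,dm$ together with Proposition \ref{prop:lowerS}. The ``delicate point'' you flag --- $\lim_{t\downarrow s}\int_M G(z,t;y_*,s)\,dm_t(z)=1$ --- is standard on a closed manifold with a smooth flow, and the paper itself uses the very same mass-evolution argument without comment in \eqref{eq:L1} and \eqref{eq:cutoff3}, so there is no need to retreat to the conjugate kernel.

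Your side observation that the hypothesis $H\le A/(t_0-t)$ is never invoked is accurate \emph{within this paper's framework}: the work that hypothesis does in Zhang's original Ricci-flow argument is here absorbed into the global, $g(0)$-dependent heat kernel upper bound of Proposition \ref{prop:upper heat kernel} (available because the Fang--Zheng Sobolev inequality holds globally under $\mathcal{D}(V)\ge 0$, compactness and $T<+\infty$). This is consistent with $\kappa$ being allowed to depend on $g(0)$; in fact your argument shows the stated $\kappa$ can be taken independent of $A$ under the paper's standing assumptions, so no genuine strengthening of the theorem is being smuggled in.
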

\subsection{Characterization of super Ricci flow}\label{sec:Average bound}
 It is well-known that
 super Ricci flow can be characterized by a gradient estimate,
 logarithmic Sobolev inequality,
 and Poincar\'e inequality for the heat semigroup,
 which can be regarded as a time-dependent version of the equivalence of a lower Ricci curvature bound established by Bakry-\'Emery \cite{BE}, Bakry-Ledoux \cite{BL} (see e.g., \cite{HN}, \cite{LL}, \cite{HaN}, \cite{KS1},
 and also \cite{MT}, \cite{S} for other characterizations).
 In this subsection,
 we present some results that can be deduced from such a characterization.

Let $x,y\in M$,
and let $s,t\in [0,T)$ with $s< t$.
The \textit{heat kernel measure} $\nu_{(x,t)}(y,s)$ is defined by
\begin{equation*}
d\nu_{(x,t)}(y,s):=G(x,t;y,s)\,dm_{s}(y).
\end{equation*}
We possess the following logarithmic Sobolev inequality (see \cite[Theorem 1.1]{LL}, \cite[Theorem 1.5]{HaN}, \cite[Theorem 1.3]{KS1}, and cf. \cite[Theorem 1.10 (2)]{HN}):
\begin{thm}[\cite{LL}, \cite{HaN}, \cite{KS1}]\label{thm:BakryLS}
Let $(M,g(t))_{t\in [0,T)}$ be a super Ricci flow.
Then for all $u \in C^{\infty}(M)$ with $\int_{M}\,u^2(y)\,d\nu_{(x,t)}(y,s)=1$ we have
\begin{equation*}
\int_{M}\,u^2 \log u^2(y)\,d\nu_{(x,t)}(y,s)\leq 4(t-s) \int_{M}\,|\nabla u|^2(y)\,d\nu_{(x,t)}(y,s).
\end{equation*}
\end{thm}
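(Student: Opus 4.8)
The plan is to run the Bakry--\'Emery--Ledoux interpolation scheme along the flow. Fix the base point $(x,t)$ and $s\in[0,t)$, write $\nu_\tau:=\nu_{(x,t)}(\cdot,\tau)$ for $\tau\in[s,t]$ (each of which is a probability measure on $M$), and recall the Chapman--Kolmogorov identity $\int_M G(x,t;y,\tau)\,G(y,\tau;z,s)\,dm_\tau(y)=G(x,t;z,s)$. Set $f:=u^2$; after a routine approximation (e.g.\ replace $f$ by $f+\epsilon$ and let $\epsilon\searrow0$) we may assume $f$ is smooth and positive, so that $\int_M f\,d\nu_s=1$. Let $\phi_\tau(y):=(P_{s,\tau}f)(y)=\int_M G(y,\tau;z,s)f(z)\,dm_s(z)$ be the forward heat evolution of $f$, so that $\partial_\tau\phi_\tau=\Delta\phi_\tau$, $\phi_s=f$, and $\phi_\tau>0$ on $[s,t]$. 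Combining $\partial_\tau\phi_\tau=\Delta\phi_\tau$ with the conjugate heat equation for $G(x,t;\cdot,\tau)$ and the identity $\partial_\tau\,dm_\tau=-H\,dm_\tau$, a short computation in which the $H$-terms cancel and only one integration by parts survives shows that $\tau\mapsto\int_M\phi_\tau\,d\nu_\tau$ is constant, hence identically $\int_M f\,d\nu_s=1$.

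Next, set $\Psi(\tau):=\int_M\phi_\tau\log\phi_\tau\,d\nu_\tau$. Then $\Psi(s)=\int_M u^2\log u^2\,d\nu_s$, whereas $\nu_\tau\to\delta_x$ and $\phi_\tau\to\phi_t$ (a smooth function) as $\tau\nearrow t$, with $\phi_t(x)=\int_M f\,d\nu_s=1$, so $\Psi(t)=\phi_t(x)\log\phi_t(x)=0$; that $\Psi$ is continuous up to $\tau=t$ is a routine parabolic-regularity point. Differentiating $\Psi$, the $H$-contributions arising from $\partial_\tau(\phi\log\phi)$, $\partial_\tau G$ and $\partial_\tau\,dm_\tau$ cancel, and one integration by parts leaves the de Bruijn identity
\[
\Psi'(\tau)=-\int_M\frac{|\nabla\phi_\tau|^2}{\phi_\tau}\,d\nu_\tau=:-I(\tau).
\]
Hence $\int_M u^2\log u^2\,d\nu_s=\Psi(s)-\Psi(t)=\int_s^t I(\tau)\,d\tau$, and since $|\nabla u|^2=|\nabla f|^2/(4f)$ gives $4\int_M|\nabla u|^2\,d\nu_s=\int_M|\nabla f|^2/f\,d\nu_s=I(s)$, it remains to show $I(\tau)\le I(s)$ for every $\tau\in[s,t]$ — for then $\int_s^t I(\tau)\,d\tau\le(t-s)I(s)=4(t-s)\int_M|\nabla u|^2\,d\nu_s$.

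It is exactly this monotonicity that encodes the super Ricci flow hypothesis, and there are two standard ways to establish it. The first invokes the $L^1$ gradient (commutation) estimate for the forward heat semigroup along a super Ricci flow, $|\nabla P_{s,\tau}f|\le P_{s,\tau}|\nabla f|$: the Cauchy--Schwarz inequality for the positive operator $P_{s,\tau}$ then yields $|\nabla\phi_\tau|^2/\phi_\tau\le P_{s,\tau}(|\nabla f|^2/f)$, and integrating against $\nu_\tau$ and applying Chapman--Kolmogorov gives $I(\tau)\le\int_M P_{s,\tau}(|\nabla f|^2/f)\,d\nu_\tau=\int_M|\nabla f|^2/f\,d\nu_s=I(s)$. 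The second route differentiates $I$ directly: a Bochner computation along the flow, after the $H$-terms again cancel, gives
\[
I'(\tau)=-2\int_M\Bigl[\,|\Hess\log\phi_\tau|^2+\bigl(\Ric+\tfrac12\partial_t g\bigr)(\nabla\log\phi_\tau,\nabla\log\phi_\tau)\,\Bigr]\phi_\tau\,d\nu_\tau\le0,
\]
so that $I$ is non-increasing on $[s,t]$. In both routes the crux is the same, and it is the step I expect to be the main obstacle: the flow-dependent Bochner identity must be arranged so that its only potentially bad term is $2(\Ric+\tfrac12\partial_t g)(\cdot,\cdot)$, which is non-negative precisely by the super Ricci flow inequality $\partial_t g\ge-2\Ric$ (in the first route this enters through the parabolic maximum principle applied to $(\partial_\tau-\Delta)|\nabla\phi_\tau|^2\le-2|\Hess\phi_\tau|^2$). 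The remaining ingredients — mass conservation, the de Bruijn identity, and the control of the concentration limit $\tau\to t$ together with the positivity and integrability needed to justify each integration by parts — are routine.
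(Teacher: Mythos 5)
Your proposal is correct and follows the standard Bakry--\'Emery--Ledoux interpolation scheme, which is exactly the strategy of the references \cite{LL}, \cite{HaN}, \cite{KS1} that the paper cites for this theorem (the paper itself does not reprove it). The key ingredients all check out: the conservation of $\int_M \phi_\tau\,d\nu_\tau$ via Chapman--Kolmogorov and cancellation of the $H$-terms, the de Bruijn identity $\Psi'(\tau)=-I(\tau)$, the boundary values $\Psi(s)=\int u^2\log u^2\,d\nu_s$ and $\Psi(t)=0$, the factor $I(s)=4\int|\nabla u|^2\,d\nu_s$ coming from $f=u^2$, and the monotonicity $I(\tau)\le I(s)$ from the time-dependent Bochner inequality $(\partial_\tau-\Delta)|\nabla\phi|^2\le -2|\Hess\phi|^2-2\bigl(\Ric+\tfrac12\partial_t g\bigr)(\nabla\phi,\nabla\phi)$, whose final term is non-negative precisely by the super Ricci flow condition. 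Both of your suggested routes to $I(\tau)\le I(s)$ (the $L^1$ gradient commutation estimate or the direct differentiation of $I$) are standard and correct in this time-dependent setting.
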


Theorem \ref{thm:BakryLS} together with the method of the proof of \cite[Theorem 1.13]{HN} implies the following Gaussian concentration inequality (cf. \cite[Theorem 1.13]{HN}):
\begin{cor}\label{cor:GaussConc}
Let $(M,g(t))_{t\in [0,T)}$ be a super Ricci flow.
Let $\nu:=\nu_{(x,t)}(\cdot,s)$.
Then for all $\Omega_1,\Omega_2\subset M$ we have
\begin{equation*}
\nu(\Omega_1)\nu(\Omega_2)\leq \exp\left(-\frac{d^2_s(\Omega_1,\Omega_2)}{8(t-s)}   \right).
\end{equation*}
\end{cor}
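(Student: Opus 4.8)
The plan is to derive the Gaussian concentration inequality from the logarithmic Sobolev inequality of Theorem~\ref{thm:BakryLS} via the classical Herbst-type argument, exactly as in the proof of \cite[Theorem 1.13]{HN}. Fix the space-time point $(x,t)$ and the time $s<t$, and set $\nu:=\nu_{(x,t)}(\cdot,s)$, which is a probability measure on $M$ because $\int_M G(x,t;y,s)\,dm_s(y)=1$. The key point is that Theorem~\ref{thm:BakryLS} is precisely a logarithmic Sobolev inequality for $(M,\nu)$ with constant $2(t-s)$ (in the $u^2\log u^2$ normalization, i.e. constant $4(t-s)$ in front of the Dirichlet energy), and such an inequality is well known to imply sub-Gaussian concentration.

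\textbf{Main steps.} First I would reduce to a statement about $1$-Lipschitz functions: for any bounded $1$-Lipschitz $f:M\to\mathbb{R}$ (Lipschitz with respect to $d_s$), I claim
\begin{equation*}
\nu\left(\left\{ f \geq \textstyle\int_M f\,d\nu + r \right\}\right) \leq \exp\left(-\frac{r^2}{8(t-s)}\right)\quad\text{for all }r>0.
\end{equation*}
To prove this, apply the log-Sobolev inequality to $u=e^{\lambda f/2}$ for $\lambda>0$; since $|\nabla u|^2 = (\lambda^2/4)|\nabla f|^2 e^{\lambda f} \leq (\lambda^2/4)e^{\lambda f}$, writing $\Lambda(\lambda):=\int_M e^{\lambda f}\,d\nu$ for the Laplace transform, the inequality becomes the differential inequality $\lambda\Lambda'(\lambda) - \Lambda(\lambda)\log\Lambda(\lambda) \leq (t-s)\lambda^2\Lambda(\lambda)$, i.e. $\frac{d}{d\lambda}\left(\frac{1}{\lambda}\log\Lambda(\lambda)\right)\leq t-s$. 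Integrating from $0$ (where $\frac{1}{\lambda}\log\Lambda(\lambda)\to\int_M f\,d\nu$) gives $\log\Lambda(\lambda)\leq \lambda\int_M f\,d\nu + (t-s)\lambda^2$. Then Markov's inequality yields $\nu(\{f\geq \int f\,d\nu+r\})\leq \exp\left(-\lambda r + (t-s)\lambda^2\right)$, and optimizing over $\lambda=r/(2(t-s))$ produces the exponent $-r^2/(4(t-s))$. (A standard smoothing/truncation argument handles the passage from smooth to merely bounded Lipschitz $f$.)

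\textbf{From one set to two sets.} Given $\Omega_1,\Omega_2\subset M$, put $r:=d_s(\Omega_1,\Omega_2)$; we may assume $r>0$. Take $f(y):=d_s(y,\Omega_1)$, which is $1$-Lipschitz, vanishes on $\Omega_1$, and is $\geq r$ on $\Omega_2$. Let $\mu:=\int_M f\,d\nu$. Applying the concentration bound to $f$ on $\Omega_2$ (if $r\geq\mu$) gives $\nu(\Omega_2)\leq \exp(-(r-\mu)^2/(4(t-s)))$, while applying it to $-f$ on $\Omega_1$ (where $-f=0\geq -\mu$ forces... ) gives $\nu(\Omega_1)\leq\exp(-\mu^2/(4(t-s)))$. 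Multiplying and using $(r-\mu)^2+\mu^2\geq r^2/2$ yields $\nu(\Omega_1)\nu(\Omega_2)\leq\exp(-r^2/(8(t-s)))$, which is the claim; if instead $r<\mu$ then already $\nu(\Omega_1)\leq\exp(-\mu^2/(4(t-s)))\leq\exp(-r^2/(8(t-s)))$ and we are done since $\nu(\Omega_2)\leq 1$.

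\textbf{Expected obstacle.} The conceptual content is entirely contained in Theorem~\ref{thm:BakryLS}, so there is no serious analytic difficulty; the only mild technical care needed is (i) justifying the Herbst argument rigorously—i.e. that $\Lambda(\lambda)$ is finite and smooth in $\lambda$ and that the boundary behavior as $\lambda\to 0^+$ is as claimed, which follows once one knows $f$ (equivalently the distance function $d_s(\cdot,\Omega_1)$) is bounded on the compact manifold $M$—and (ii) the approximation allowing application of the $C^\infty$ log-Sobolev inequality to the Lipschitz function $e^{\lambda f/2}$, handled by mollification together with the fact that a Lipschitz function on a compact manifold is a uniform limit of smooth functions with uniformly bounded gradients. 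Neither of these is a genuine obstacle; the proof is a faithful transcription of the argument in \cite[Theorem 1.13]{HN}.
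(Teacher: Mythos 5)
Your proof is correct and follows the same route the paper indicates: the Herbst argument applied to the log-Sobolev inequality of Theorem~\ref{thm:BakryLS}, exactly as in \cite[Theorem 1.13]{HN}, to which the paper simply defers without writing out the details. Your reconstruction — the differential inequality for $\lambda^{-1}\log\Lambda(\lambda)$, optimization over $\lambda$, and the two-sided application to $f=d_s(\cdot,\Omega_1)$ with the elementary bound $(r-\mu)^2+\mu^2\ge r^2/2$ — is a faithful and complete version of that argument.
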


Corollary \ref{cor:GaussConc} together with the method of the proof of \cite[(1.20)]{HN} tells us the following average bound for the heat kernel (cf. \cite[(1.20)]{HN}):
\begin{prop}\label{prop:Average}
Let $(M,g(t))_{t\in [0,T)}$ be a super Ricci flow.
Let $x,y\in M$,
and let $s,t\in [0,T)$ with $s< t$.
For $c>0$ we assume $m(B(y, c\sqrt{t-s},s))\geq \kappa (t-s)^{n/2}$.
Then
\begin{align*}
&\quad \,\,\frac{1}{ m(B(y, c\sqrt{t-s},s))} \int_{B(y,c\sqrt{t-s},s)} G(x, t; z, s) \,dm_s(z) \\
&\leq \bigg( \int_{B(x,c\sqrt{t-s},s)} G(x,t; z, s ) \,dm_s (z) \bigg)^{-1} \frac{\kappa^{-1} \,e^{c^2/2}}{(t-s)^{n/2}} \exp \left( - \frac{d^2_{s}(x,y)}{16(t-s)}  \right).
\end{align*}
\end{prop}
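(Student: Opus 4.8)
The plan is to read the estimate off from the Gaussian concentration inequality of Corollary~\ref{cor:GaussConc}, applied to the two $g(s)$-balls $\Omega_1:=B(y,c\sqrt{t-s},s)$ and $\Omega_2:=B(x,c\sqrt{t-s},s)$. Writing $\nu:=\nu_{(x,t)}(\cdot,s)$, the definition of the heat kernel measure gives $\nu(\Omega_1)=\int_{B(y,c\sqrt{t-s},s)}G(x,t;z,s)\,dm_s(z)$ and $\nu(\Omega_2)=\int_{B(x,c\sqrt{t-s},s)}G(x,t;z,s)\,dm_s(z)$, the latter being positive since $G>0$. Corollary~\ref{cor:GaussConc} bounds the \emph{product} $\nu(\Omega_1)\nu(\Omega_2)$, so after dividing by $\nu(\Omega_2)$ and by $m(\Omega_1)$ and invoking the volume hypothesis on $\Omega_1$, the claimed inequality should fall out.

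The first step is a lower bound for $d_s(\Omega_1,\Omega_2)$. By the triangle inequality for $g(s)$, any $z_1\in\Omega_1$ and $z_2\in\Omega_2$ satisfy $d_s(z_1,z_2)\geq d_s(x,y)-2c\sqrt{t-s}$, whence $d_s(\Omega_1,\Omega_2)\geq\max\{d_s(x,y)-2c\sqrt{t-s},\,0\}$. Applying the elementary inequality $(a-b)^2\geq a^2/2-b^2$ with $a=d_s(x,y)$ and $b=2c\sqrt{t-s}$ — and noting that when $d_s(x,y)<2c\sqrt{t-s}$ the right-hand side below is negative, so the inequality is automatic — one obtains in all cases
\[
d_s(\Omega_1,\Omega_2)^2\geq \frac{d_s(x,y)^2}{2}-4c^2(t-s).
\]
Plugging this into Corollary~\ref{cor:GaussConc} yields
\[
\nu(\Omega_1)\,\nu(\Omega_2)\leq \exp\left(-\frac{d_s(\Omega_1,\Omega_2)^2}{8(t-s)}\right)\leq e^{c^2/2}\,\exp\left(-\frac{d_s(x,y)^2}{16(t-s)}\right).
\]

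Finally I would divide both sides by $\nu(\Omega_2)>0$ and by $m(B(y,c\sqrt{t-s},s))$, and then use the hypothesis in the form $m(B(y,c\sqrt{t-s},s))^{-1}\leq \kappa^{-1}(t-s)^{-n/2}$; this produces exactly the asserted estimate. There is no genuine obstacle here. The only point requiring a little care is the quadratic estimate that absorbs the cross term $4c\sqrt{t-s}\,d_s(x,y)$ coming from the triangle inequality into the prefactor $e^{c^2/2}$ — this is precisely the mechanism degrading the constant $8$ of Corollary~\ref{cor:GaussConc} to the $16$ in the exponent and producing the factor $e^{c^2/2}$ in the statement.
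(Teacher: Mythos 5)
Your proof is correct and follows the same route the paper indicates: the authors explicitly say Proposition \ref{prop:Average} follows from Corollary \ref{cor:GaussConc} by the method of Hein--Naber's (1.20), which is precisely your argument of applying the Gaussian concentration bound to the two $g(s)$-balls, estimating $d_s(\Omega_1,\Omega_2)$ by the triangle inequality, absorbing the cross term via $(a-b)^2\geq a^2/2-b^2$ (valid in all cases since $d_s(\Omega_1,\Omega_2)^2\geq 0$ and $a^2/2-b^2<0$ when $a<b$), and then dividing by $\nu(\Omega_2)$ and $m(\Omega_1)$ using the volume hypothesis. The constant bookkeeping ($8\to 16$ with prefactor $e^{c^2/2}$) is also correct.
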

%\begin{proof}
%Applying Corollary \ref{cor:GaussConc} to $\Omega_1=B(y,c\sqrt{t-s},s)$ and $\Omega_2=B(x,c\sqrt{t-s},s)$,
%we obtain
%\begin{equation*}
%\int_{B(y,c\sqrt{t-s},s)} G(x, t; z, s) dm_s(z)\,\int_{B(x,c\sqrt{t-s},s)} G(x, t; z, s) dm_s(z)\leq \exp\left(-\frac{d_s(\Omega_1,\Omega_2)^2}{8(t-s)}   \right).
%\end{equation*}
%Since $d_s(\Omega_1,\Omega_2)\geq d_s(x,y)-2c\sqrt{t-s}$,
%we see
%\begin{equation*}
%-d_s(\Omega_1,\Omega_2)^2\leq -\frac{d_s(x,y)^2}{2}+4c^2(t-s),
%\end{equation*}
%and hence
%\begin{equation*}
%\int_{B(y,c\sqrt{t-s},s)} G(x, t; z, s) dm_s(z)\,\int_{B(x,c\sqrt{t-s},s)} G(x, t; z, s) dm_s(z)\leq e^{c^2/2}\exp\left(-\frac{d_s(x,y)^2}{16(t-s)}   \right).
%\end{equation*}
%Now,
%the noncollapsing property implies the desired conclusion.
%\end{proof}

In \cite{HN},
a similar inequality has been stated only for $c=1$.
For the sake of our argument,
we present a slightly general form.
The method is completely same as that of \cite[(1.20)]{HN}.

%%%%%%%%%%%%%%%%%%%%%%%%%%%%
%%%%%%%%%%%%%%%%%%%%%%%%%%%%
%%%%%%%%%%%%%%%%%%%%%%%%%%%%
\section{Distance distortion estimate}\label{sec:distance}

In what follows,
we denote by $(M,g(t))_{t\in [0,T)}$ an $n$-dimensional compact super Ricci flow with $T<+\infty$ satisfying $\mathcal{D}(V)\geq 0$ for all vector fields $V$.
Unless otherwise stated,
positive constants which appear in the proof will depend only on $n,T$ and $g(0)$ (cf. Remark \ref{rem:constant}).

The aim of this section is to prove a distance distortion estimate.
We start with the following key lemma,
which has been obtained by Bamler-Zhang \cite{BZ1} for Ricci flow (see \cite[Lemma 3.1]{BZ1}):
\begin{lem}\label{lem:distance basic}
There exists a positive constant $C_n>0$ depending only on $n$ such that the following holds:
For $t_0 \in [0,T)$,
let $u \in C^\infty (M \times (t_0,T))$ be a positive solution to the heat equation.
For $t_1,t_2\in (t_0,T)$ with $t_1<t_2$,
we assume $u\leq A$ for $A> 0$ on $M\times [t_1,t_2]$.
Then for all $x\in M$ and $t\in (t_1,t_2]$ we have
\begin{equation*}
\left( |\Delta u | + \frac{| \nabla u |^2}{u} - A H \right) (x, t) \leq \frac{ C_n A}{t-t_1}.
\end{equation*}
\end{lem}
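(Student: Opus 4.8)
The plan is to bound the three summands of the left-hand side in two stages: the term $|\nabla u|^2/u$ comes straight from the Zhang-type gradient estimate, while the combination $|\Delta u|-AH$ is controlled by exhibiting an auxiliary function that is a subsolution of the heat equation and running a maximum-principle argument. For the first stage, apply Theorem~\ref{thm:Zhang} to $u$ on $[t_1,t_2]$: for $x\in M$ and $t\in(t_1,t_2]$ it gives $\tfrac{|\nabla u|}{u}(x,t)\le (t-t_1)^{-1/2}\bigl(\log\tfrac{A}{u(x,t)}\bigr)^{1/2}$; squaring, multiplying by $u(x,t)$, and using $\sup_{0<r\le 1}r\log\tfrac1r=e^{-1}$ with $r=u(x,t)/A$, one gets
\begin{equation*}
\frac{|\nabla u|^2}{u}(x,t)\le\frac{A}{e(t-t_1)} .
\end{equation*}
It then remains to prove $(|\Delta u|-AH)(x,t)\le \tfrac{C_nA}{t-t_1}$ for a constant depending only on $n$.

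For the second stage, since $\partial_t u=\Delta u$, the variation formulas for $\Delta$ under $\partial_t g=-2h$ give $(\partial_t-\Delta)(\Delta u)=2\langle h,\Hess u\rangle+\langle 2\Div h-\nabla H,\nabla u\rangle$; Bochner's formula together with the super Ricci inequality $h\le\Ric$ gives $(\partial_t-\Delta)|\nabla u|^2=2(h-\Ric)(\nabla u,\nabla u)-2|\Hess u|^2\le-2|\Hess u|^2$; and rewriting $\mathcal{D}(V)\ge 0$ gives, for every vector field $V$, $(\partial_t-\Delta)H\ge 2|h|^2-2(2\Div h-\nabla H)(V)-2(\Ric-h)(V,V)$. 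Set
\begin{equation*}
W^{\pm}:=\pm\Delta u-AH+\tfrac{1}{4A}|\nabla u|^2 .
\end{equation*}
Taking $V=\mp\tfrac{1}{2A}\nabla u$ in the inequality for $H$ cancels the first-order term $\langle 2\Div h-\nabla H,\nabla u\rangle$, while the resulting quadratic term $2(\Ric-h)(V,V)=\tfrac{1}{2A^2}(\Ric-h)(\nabla u,\nabla u)$ cancels exactly against the $(\Ric-h)$-part of $\tfrac{1}{4A}(\partial_t-\Delta)|\nabla u|^2$; completing the square in the surviving terms $\pm 2\langle h,\Hess u\rangle-2A|h|^2-\tfrac{1}{2A}|\Hess u|^2$ yields the clean inequality
\begin{equation*}
(\partial_t-\Delta)W^{\pm}\le-\frac{1}{2A}\bigl|\Hess u\mp 2Ah\bigr|^2\le-\frac{1}{2An}\bigl(\Delta u\mp 2AH\bigr)^2\le 0 ,
\end{equation*}
so $W^{+}$ and $W^{-}$ are both subsolutions of the heat equation. (For Ricci flow the first-order term vanishes automatically by the contracted second Bianchi identity; here it is the non-negativity of the Müller quantity, applied to a well-chosen $V$, that does this job, and this is the one place where $\mathcal{D}(V)\ge 0$ for all $V$, rather than merely the super Ricci condition, is needed.)

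Finally, apply the maximum principle to $(t-t_1)W^{\pm}$ on the compact cylinder $M\times[t_1,t]$: it is smooth, vanishes on $\{t_1\}\times M$, and satisfies $(\partial_t-\Delta)\bigl[(t-t_1)W^{\pm}\bigr]\le W^{\pm}-\tfrac{t-t_1}{2An}(\Delta u\mp 2AH)^2$. At an interior maximum point $(x_\ast,t_\ast)$ with positive value the left-hand side is $\ge 0$, so $(\Delta u\mp 2AH)^2\le\tfrac{2An}{t_\ast-t_1}W^{\pm}$ there; since $|\Delta u\mp 2AH|=\bigl|W^{\pm}-AH-\tfrac{1}{4A}|\nabla u|^2\bigr|$, this together with $-AH\le\tfrac{nA}{2(t_\ast-t_1)}$ (Proposition~\ref{prop:lowerS} and $t_\ast\ge t_\ast-t_1$) and $\tfrac{1}{4A}|\nabla u|^2\le\tfrac{A}{4e(t_\ast-t_1)}$ (from the first stage) forces, after an elementary manipulation, $(t_\ast-t_1)W^{\pm}(x_\ast,t_\ast)\le C_nA$; hence $W^{\pm}(x,t)\le\tfrac{C_nA}{t-t_1}$ for all $x$ and all $t\in(t_1,t_2]$. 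Since $|\Delta u|-AH+\tfrac{1}{4A}|\nabla u|^2=\max(W^{+},W^{-})$, combining this with the gradient bound of the first stage and relabelling the constant finishes the proof. I expect the step that needs the most care to be exactly this maximum-principle conclusion: one must handle the two signs $\pm$ simultaneously and, in particular, the regime in which $H$ is large and positive — there $|\Delta u|-AH$ is automatically well behaved but $W^{\pm}$ need not be small, so the extra negative term $-\tfrac{1}{2An}(\Delta u\mp 2AH)^2$ has to be exploited with some care.
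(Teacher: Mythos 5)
Your computation of the differential inequality $(\partial_t-\Delta)W^{\pm}\le-\frac{1}{2A}\bigl|\Hess u\mp 2Ah\bigr|^2\le-\frac{1}{2An}(\Delta u\mp 2AH)^2$ is correct, and the choice $V=\mp\frac{1}{2A}\nabla u$ to use $\mathcal{D}(V)\ge 0$ for the cancellation of the first-order term is exactly the right move (it is also the idea behind the paper's proof). However, the final maximum-principle step does not close, and the difficulty is precisely the one you flag at the end. The trace appearing in your quadratic penalty is $\Delta u\mp 2AH$, and the mismatch with the quantity you are trying to bound is
\begin{equation*}
W^{\pm}-(\pm\Delta u\mp 2AH)=AH+\tfrac{1}{4A}|\nabla u|^2 .
\end{equation*}
While Proposition~\ref{prop:lowerS} bounds $-AH$ from above by $\frac{nA}{2(t-t_1)}$, there is no a priori \emph{upper} bound on $AH$ anywhere in the hypotheses, so this mismatch can be arbitrarily large. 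Concretely, at a point where $H$ is large and positive and $\Delta u$ is close to $2AH$, one has $W^{+}\approx AH$ large while $(\Delta u-2AH)^2$ is small, so the inequality $(\Delta u-2AH)^2\le\frac{2An}{t_*-t_1}W^{+}$ gives no information about $W^{+}$. Your ``elementary manipulation'' cannot be carried out in this regime, so the claim $(t_*-t_1)W^{\pm}(x_*,t_*)\le C_nA$ does not follow.

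The paper's proof (after normalizing $A=1$, so $u\le1$) avoids this by choosing the gradient term to be $\frac{|\nabla u|^2}{u}$ rather than a constant multiple of $|\nabla u|^2$. With $L_1=-\Delta u+\frac{|\nabla u|^2}{u}-H$ and $L_2=\Delta u+\frac{|\nabla u|^2}{u}-H$, the evolution of $\frac{|\nabla u|^2}{u}$ produces the tensor $\nabla^2u-\frac{du\otimes du}{u}$, and after combining with $\pm h$ and taking traces one finds
\begin{equation*}
(\partial_t-\Delta)L_1\le-\tfrac{1}{n}L_1^2+\tfrac{|\nabla u|^4}{u^2},\qquad (\partial_t-\Delta)L_2\le-\tfrac{1}{2n}L_2^2+\bigl(1+\tfrac{4}{n}\bigr)\tfrac{|\nabla u|^4}{u^2},
\end{equation*}
where the error $\frac{|\nabla u|^4}{u^2}\le\frac{1}{e^2(t-t_1)^2}$ is controlled by Theorem~\ref{thm:Zhang}. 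The negative quadratic term is now in $L_i$ itself (up to the bounded quantity $\frac{|\nabla u|^2}{u}$), so comparison with the explicit supersolution $C_{n,i}/(t-t_1)$ goes through. The $1/u$ factor, together with $u\le1$ to convert $\frac{2}{u}$ to $2$ at the right places, is the device that makes the trace of the squared tensor match the quantity being estimated; replacing it by $\frac{1}{4A}|\nabla u|^2$ destroys that match and the unbounded error $AH$ appears. If you keep your choice of $V$ but switch to $L_1,L_2$ as above, your computation turns into the paper's.
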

\begin{proof}
We may assume $A=1$.
We first estimate
\begin{equation*}
L_1:= - \Delta u + \frac{|\nabla u|^2}{u}-H.
\end{equation*}
Using the Bochner formula,
we deduce the following (cf. \cite[Proposition 5.1.1]{Z4}):
\begin{align}\label{eq:evolution}
 (\partial_t - \Delta ) \Delta u &=\partial_t \Delta u-\Delta \partial_t u  = 2\langle \nabla^2u, h \rangle+2\Div h(\nabla u)-\langle \nabla H,\nabla u \rangle, \\ \notag
 (\partial_t - \Delta ) \frac{|\nabla u|^2}{u} &= - \frac{2}{u} \left( \left|\nabla^2 u -\frac{du\otimes du}{u}\right|^2+\Ric(\nabla u,\nabla u)-h(\nabla u,\nabla u) \right),\\ \notag
 (\partial_t - \Delta ) H &= 2 | h |^2+\mathcal{D}(0).
\end{align}
Therefore, by $u\leq 1, \Ric\geq h$ and $\mathcal{D}(V)\geq 0$ we obtain
\begin{align*}
(\partial_t - \Delta)L_1&=- \frac{2}{u} \left| \nabla^2 u -\frac{ du \otimes du}{u} \right|^2-2\langle \nabla^2 u,h\rangle-2 | h |^2\\
                               &\quad- 2\Div h(\nabla u)+\langle \nabla H,\nabla u \rangle-\frac{2}{u}\left(\Ric(\nabla u,\nabla u)-h(\nabla u,\nabla u) \right)-\mathcal{D}(0)\\
                                 &\leq -  \left| \nabla^2 u -\frac{ du \otimes du}{u} \right|^2-2\langle \nabla^2 u,h\rangle-2 | h |^2\\
                                 &\quad- 2\Div h(\nabla u)+\langle \nabla H,\nabla u \rangle-(\Ric(\nabla u,\nabla u)-h(\nabla u,\nabla u))-\mathcal{D}(0)\\
                                  &= - \left| \nabla^2 u -\frac{ du \otimes du}{u}+h \right|^2- \left| \frac{du\otimes du}{u}+h \right|^2+\frac{|\nabla u|^4}{u^2}-\frac{1}{2}(\mathcal{D}(\nabla u)+\mathcal{D}(0))\\
                                   &\leq  - \frac{1}{n} \left( \Delta u -\frac{ |\nabla u|^2}{u}+H \right)^2+\frac{|\nabla u|^4}{u^2}=  - \frac{1}{n} L^2_1+\frac{|\nabla u|^4}{u^2}.
\end{align*}
Theorem \ref{thm:Zhang} tells us that
\begin{equation}\label{eq:distance basic1}
\frac{|\nabla u|^2}{u}  \leq \frac{u}{t-t_1}\log \frac{1}{u}\leq \frac{1}{e(t-t_1)}
\end{equation}
on $M\times (t_1,t_2]$,
and hence
\begin{equation*}
(\partial_t - \Delta)L_1\leq - \frac{1}{n} L^2_1+\frac{1}{e^2(t-t_1)^2}.
\end{equation*}
Let $C_{n,1}>0$ be a positive constant determined by
\begin{equation*}
\frac{C_{n,1}+e^{-2}}{C_{n,1}^2} = \frac{1}{n}
\end{equation*}
depending only on $n$.
Note that
\begin{equation*}
(\partial_t - \Delta ) \left(\frac{C_{n,1}}{t-t_1}\right)=- \frac{1}{n} \left(\frac{C_{n,1}}{t-t_1}\right)^2 + \frac{1}{e^2 (t-t_1)^2}.
\end{equation*}
It follows that
\begin{equation*}
 (\partial_t - \Delta ) \left( L_1 - \frac{C_{n,1}}{t-t_1} \right) \leq - \frac{1}{n} \left(L_1+ \frac{C_{n,1}}{t-t_1} \right) \left(L_1- \frac{C_{n,1}}{t-t_1} \right).
 \end{equation*}
The maximum principle leads us to $L_1 \leq C_{n,1}/(t-t_1)$.

We next estimate
\begin{equation*}
L_2:= \Delta u + \frac{|\nabla u|^2}{u}-H.
\end{equation*}
Using \eqref{eq:evolution}, $u\leq 1, \Ric\geq h$ and $\mathcal{D}(V)\geq 0$ again, one can calculate
\begin{align*}
(\partial_t - \Delta)L_2&=- \frac{2}{u} \left| \nabla^2 u -\frac{ du \otimes du}{u} \right|^2+2\langle \nabla^2 u,h\rangle-2 | h |^2\\
                               &\quad +2\Div h(\nabla u)-\langle \nabla H,\nabla u \rangle-\frac{2}{u}\left(\Ric(\nabla u,\nabla u)-h(\nabla u,\nabla u) \right)-\mathcal{D}(0)\\
                               &\leq - \left| \nabla^2 u -\frac{ du \otimes du}{u} \right|^2+2\langle \nabla^2 u,h\rangle-2 | h |^2\\
                               &\quad -2\Div h(-\nabla u)+\langle \nabla H,(-\nabla u) \rangle-\left(\Ric(\nabla u,\nabla u)-h(\nabla u,\nabla u) \right)-\mathcal{D}(0)\\
                              &=- \left| \nabla^2 u -\frac{ du \otimes du}{u} -h\right|^2-\left| \frac{du \otimes du}{u}-h \right|^2+\frac{|\nabla u|^4}{u^2}-\frac{1}{2}\left(\mathcal{D}(-\nabla u)+\mathcal{D}(0) \right)\\
                             &\leq -\frac{1}{n} \left( \Delta u-\frac{|\nabla u|^2}{u}-H  \right)^2+\frac{|\nabla u|^4}{u^2}\\
                             &=-\frac{1}{2n}\left( L_2-4\frac{|\nabla u|^2}{u}   \right)^2-\frac{1}{2n}L^2_2+\left( 1+\frac{4}{n}  \right)\frac{|\nabla u|^4}{u^2}
                             \leq -\frac{1}{2n}L^2_2+\left( 1+\frac{4}{n}  \right)\frac{|\nabla u|^4}{u^2}.
\end{align*}
The estimate \eqref{eq:distance basic1} implies
\begin{equation*}
(\partial_t - \Delta)L_2\leq - \frac{1}{n} L^2_2+\left( 1+\frac{4}{n} \right)\frac{1}{e^2(t-t_1)^2}
\end{equation*}
over $M\times (t_1,t_2]$.
Let $C_{n,2}>0$ be a constant determined by
\begin{equation*}
C^{-1}_{n,2}+\left( 1+\frac{4}{n} \right)\frac{1}{e^2}C^{-2}_{n,2} = \frac{1}{2n},
\end{equation*}
which depends only on $n$.
We notice that
\begin{equation*}
(\partial_t - \Delta ) \left(\frac{C_{n,2}}{t-t_1}\right)=- \frac{1}{2n} \left(\frac{C_{n,2}}{t-t_1}\right)^2 + \left(  1+\frac{4}{n} \right)\frac{1}{e^2 (t-t_1)^2};
\end{equation*}
in particular,
\begin{equation*}
 (\partial_t - \Delta ) \left( L_2 - \frac{C_{n,2}}{t-t_1} \right) \leq - \frac{1}{2n} \left(L_2+ \frac{C_{n,2}}{t-t_1} \right) \left(L_2- \frac{C_{n,2}}{t-t_1} \right).
 \end{equation*}
Due to the maximum principle,
we obtain $L_2 \leq C_{n,2}/(t-t_1)$.
This completes the proof.
\end{proof}

We are now in a position to prove the following distance distortion estimate,
which has been established by Bamler-Zhang \cite{BZ1} for Ricci flow (see \cite[Theorem 1.1]{BZ1}):
\begin{thm}\label{thm:distcon}
There exists a positive constant $\alpha \in (0,1)$ depending only on $n,T$ and $g(0)$ such that the following holds:
For $t_0 \in (0,T)$ and $r_0\in (0,\sqrt{t_0}]$,
we assume $H \leq r_0^{-2}$.
Let $x_0, y_0 \in M$ satisfy $d_{t_0} (x_0, y_0 ) \geq r_0$.
Then for all $t\in [0,T)$ with $|t-t_0|\leq \alpha r^2_0$,
\begin{equation*}
\alpha d_{t_0} (x_0, y_0) \leq d_t (x_0, y_0) \leq \alpha^{-1} d_{t_0} (x_0, y_0).
\end{equation*}
\end{thm}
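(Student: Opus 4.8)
The plan is to adapt the Bamler--Zhang argument for Ricci flow (\cite[Theorem 1.1]{BZ1}), replacing the scalar curvature bound by the bound $H \le r_0^{-2}$ and replacing their curvature inputs by the corresponding tools already collected above: the noncollapsing Theorem \ref{thm:noncollapse}, the Zhang-type gradient estimate and Harnack inequality (Theorem \ref{thm:Zhang}, Corollary \ref{cor:lemZhang}), the lower Gaussian bound Theorem \ref{thm:lowerGauss}, and the elementary Laplacian estimate Lemma \ref{lem:distance basic}. By the rescaling in Remark \ref{rem:rescaling} we may normalize $r_0 = 1$, so that $H \le 1$ near time $t_0$ and $d_{t_0}(x_0,y_0) \ge 1$; all the hypotheses of the quoted results then hold on the relevant parabolic regions, and we must show $d_t(x_0,y_0)$ is comparable to $d_{t_0}(x_0,y_0)$ for $|t - t_0| \le \alpha$.

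The two inequalities are proved by different devices. For the \emph{lower} bound on $d_t(x_0,y_0)$ (equivalently, an upper bound on how fast the distance can shrink), I would fix a good cutoff-type test function built from the heat kernel based at, say, $(x_0, t_0 + \alpha)$: set $u(\cdot, \tau) := G(x_0, t_0+\alpha; \cdot, \tau)$, which solves the conjugate heat equation, or more symmetrically use forward heat kernels based at $x_0$ and at $y_0$ and exploit that, by the lower Gaussian estimate (Theorem \ref{thm:lowerGauss}) together with the on-diagonal upper bound (Proposition \ref{prop:upper heat kernel}), these functions are bounded below by a definite constant on the ball $B(x_0, C, t)$ and decay like $\exp(-c\, d_t(x_0,y_0)^2)$ away from it. Feeding such $u$ into the Harnack inequality Corollary \ref{cor:lemZhang}, or differentiating $\int u\, dm$ along the flow and controlling the time derivative of the volume by $-H\,dm \ge -\,dm$, yields a differential inequality for $d_t(x_0,y_0)^2$ that forbids it from collapsing below $\alpha\, d_{t_0}(x_0,y_0)$ in time $\alpha$. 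The key analytic ingredient here is Lemma \ref{lem:distance basic}: along any integral curve, $\frac{d}{dt} d_t(x_0, y_0)$ is controlled, in the barrier sense, by an integral of $h$ along a minimizing geodesic, and $h \le \Ric$ plus the bound on $H$ together with the gradient estimate \eqref{eq:distance basic1} convert this into a bound of the shape $|\tfrac{d}{dt} d_t| \le C/\sqrt{t - t_1} + C$ on the part of the geodesic outside a small ball.

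For the \emph{upper} bound on $d_t(x_0,y_0)$ one argues in the opposite time direction, or symmetrically, using the same machinery: the point is that a minimizing $g(t)$-geodesic from $x_0$ to $y_0$ of length $d_t(x_0,y_0)$ can be used as a competitor at time $t_0$, and along it the metric cannot have expanded too much because $\partial_t g = -2h$ with $h \le \Ric$ and with $H = \tr h \ge -\tfrac n{2t}$ from Proposition \ref{prop:lowerS}, which bounds the \emph{positive} part of the distortion from the negative-definite directions of $h$. Concretely, one estimates $\partial_t \log d_t(x_0,y_0)$ from above using a point-picking/integral argument exactly as in \cite[\S 3]{BZ1}: away from the endpoints the heat kernel lower bound forces the geodesic to pass through noncollapsed balls (Theorem \ref{thm:noncollapse}), and on those balls $H$ is bounded, so the Gaussian estimate plus Lemma \ref{lem:distance basic} give pointwise control of $h$ in an averaged sense; summing over a chain of such balls of definite size along the geodesic produces the estimate $d_{t_0}(x_0,y_0) \le \alpha^{-1} d_t(x_0,y_0)$. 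Choosing $\alpha$ small enough in terms of $n, T, g(0)$ so that all the ``$|t-t_0| \le \alpha$'' smallness requirements used in the chain argument are met closes the proof.

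The main obstacle I expect is the chain-of-balls argument for controlling the distance from above (and symmetrically from below) when the two points are far apart: one must show that a minimizing geodesic at the ``wrong'' time still passes through a controlled number of regions on which $H$ is bounded and the volume is noncollapsed, so that Lemma \ref{lem:distance basic} and Theorems \ref{thm:noncollapse}, \ref{thm:lowerGauss} can be applied locally and then glued. In \cite{BZ1} this is where the heat kernel plays its essential role, and the delicate point in the super Ricci flow setting is that $h$, unlike $\Ric$, need not be controlled pointwise; one only has $h \le \Ric$, $\tr h = H \le r_0^{-2}$, and $H \ge -n/(2t)$, so the distortion in the ``bad'' directions of $h$ must be absorbed using the Müller quantity nonnegativity (which is what makes all the quoted propositions available) rather than a two-sided curvature bound. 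Carefully tracking that only these one-sided bounds on $h$ and the already-established heat-kernel estimates enter — never a bound on $|\Ric|$ or $|h|$ — is the heart of the argument.
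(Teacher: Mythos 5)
Your proposal correctly identifies the toolbox (the noncollapsing Theorem~\ref{thm:noncollapse}, the Zhang-type estimates Theorem~\ref{thm:Zhang} and Corollary~\ref{cor:lemZhang}, Lemma~\ref{lem:distance basic}, and a heat-kernel chain-of-balls count), but it omits the ingredient that makes everything else run, and one of the objects you propose to feed into the machinery is of the wrong type. The paper's argument hinges on producing a \emph{forward} heat kernel $G(\cdot,\cdot;z,t_0-\tfrac12)$ whose value at $(x_0,t_0)$ is bounded below by a universal constant: the point $z$ is chosen via the reduced-distance bound $\min_y \ell_{(x_0,t_0)}(y,t_0-\tfrac12)\leq n/2$ (Proposition~\ref{prop:reduced estimate}) and the lower bound on the heat kernel by the reduced distance (Proposition~\ref{prop:reduced kernel estimate}). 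Neither of these appears in your plan. Your first candidate $u(\cdot,\tau)=G(x_0,t_0+\alpha;\cdot,\tau)$ is a solution of the \emph{conjugate} heat equation, so Lemma~\ref{lem:distance basic} (stated for positive heat-equation solutions) does not apply to it, and there is no available analogue of Proposition~\ref{prop:reduced kernel estimate} giving a lower bound for it at time $t_0$. Your alternative of a forward heat kernel based at $(x_0,t_0-c)$ combined with Theorem~\ref{thm:lowerGauss} could in principle play the role of Propositions~\ref{prop:reduced estimate}--\ref{prop:reduced kernel estimate}, but as written this is a guess rather than a step; the reduced-geometry selection of the base point $z$ is the concrete idea that is missing.

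The second gap is structural. The paper begins by invoking the BZ1 reduction to the case $d_{t_0}(x_0,y_0)\leq 2r_0$ and to proving \emph{only} the upper bound $d_t\leq\alpha^{-1}d_{t_0}$; the lower bound and the far-apart case then follow formally. This reduction is not cosmetic: it is precisely what allows Corollary~\ref{cor:lemZhang} to yield a universal lower bound $G(\cdot,t_0)\geq C_4$ along the whole $g(t_0)$-minimizing geodesic $\gamma$, which in turn seeds the chain of balls. Your proposal never performs this reduction and instead proposes two different mechanisms for the two inequalities, including a $\partial_t\log d_t$ differential inequality that is not what the paper (or BZ1) actually does: the mechanism is a volume count, namely a maximal $2\sqrt{\alpha_0}$-separated set $\{x_i\}_{i=1}^N\subset\gamma([0,1])$ with respect to $g(t)$ giving $d_t(x_0,y_0)\leq 4N\sqrt{\alpha_0}$, with $N$ bounded from $\int_M G\,dm\leq e^n$, the pointwise lower bound $G\geq C_5$ on $B(x_i,\sqrt{\alpha_0},t)$, and Theorem~\ref{thm:noncollapse}. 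Finally, the inequality you state as the output of the chain argument, $d_{t_0}\leq\alpha^{-1}d_t$, is the \emph{lower} distortion bound rather than the upper one; the directions in your sketch are reversed.
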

\begin{proof}
By the argument of \cite[Theorem 1.1]{BZ1},
it suffices to prove the upper bound for $d_t(x_0,y_0)$ only in the case of $d_{t_0}(x_0, y_0) \leq 2 r_0$.
By the parabolic rescaling,
it is enough to show the claim when $r_0 = 1$ (see Remark \ref{rem:rescaling}).
Then we have $t_0 \geq 1$ and $H \leq 1$.
We further possess $H \geq -n$ on $M \times [t_0 -1/2,t_0+1/2]$ by virtue of Proposition \ref{prop:lowerS}.
Let $\gamma:[0,1] \to M$ be a minimal geodesic from $x_0$ to $y_0$ with respect to $g(t_0)$.
By Proposition \ref{prop:reduced estimate},
\begin{equation}\label{eq:reduced10}
\ell_{(x_0, t_0)} \left(z, t_0 - \frac{1}{2} \right) \leq  \frac{n}{2}
\end{equation}
for some $z \in M$.
We put $G(x,t):=G(x,t; z, t_0 - 1/2)$.

Due to Proposition \ref{prop:upper heat kernel},
on $M \times [t_0 - 1/4, t_0+1/4]$ it holds that
\begin{equation}\label{eq:distcon1}
 G(x, t) \leq \frac{C_1}{(t-(t_0 - 1/2))^{n/2}}\leq C_2.
\end{equation}
Thanks to Proposition \ref{prop:reduced kernel estimate} and \eqref{eq:reduced10},
\begin{equation}\label{eq:distcon2}
 G(x_0, t_0) \geq \frac{1}{(4\pi (t_0 - (t_0 -1/2))^{n/2}} \exp\left(- \ell_{(x_0, t_0)} (z, t_0 - 1/2)\right) \geq C_{3}.
\end{equation}
By \eqref{eq:distcon1}, Corollary \ref{cor:lemZhang}, \eqref{eq:distcon2} and the assumption $d_{t_0}(x_0,y_0)\leq 2$,
for all $x\in \gamma([0,1])$,
\begin{align}\label{eq:distcon3}
 G(x,t_0) &\geq C^{-1}_2 G(x_0,t_0)^2\,\exp \left(-\frac{d^2_{t_0}(x_0,x)}{2(t_0-(t_0-1/4))}\right)\\ \notag
              &\geq C^{-1}_2\,C^2_{3}\,\exp \left(-2d^2_{t_0}(x_0,y_0)\right)\geq e^{-8}\,C^{-1}_2\,C^2_{3}=:C_4.
\end{align}
For the dimensional constant $C_{n}>0$ obtained in Lemma \ref{lem:distance basic},
we set
\begin{equation*}
\alpha_0:=\min  \left\{\frac{1}{8},\frac{C_4}{4C_2(1+8C_{n})}\right\}.
\end{equation*}
Using \eqref{eq:distcon3}, \eqref{eq:distcon1}, Lemma \ref{lem:distance basic}, $H\leq 1$,
for all $x \in \gamma([0,1])$ and $t \in [t_0 - \alpha_0, t_0+\alpha_0]$,
we see
\begin{align}\label{eq:distcon4}
G(x,t) &\geq  G(x,t_0)-\int^{t_0+\alpha_0}_{t_0-\alpha_0}\,\left|\partial_t G\right|(x,t)\,dt=G(x,t_0)-\int^{t_0+\alpha_0}_{t_0-\alpha_0}\,\left|\Delta G\right|(x,t)\,dt\\ \notag
         &\geq  C_4-C_2\int^{t_0+\alpha_0}_{t_0-\alpha_0}\,\left( H(x,t) +\frac{C_{n}}{t-(t_0-1/4)} \right)\,dt\\ \notag
         &\geq C_4-2\alpha_0 \,C_2 \left( 1+\frac{C_{n}}{1/4-\alpha_0} \right)\geq C_4-2\alpha_0 \,C_2 \left( 1+8C_{n}\right)\geq \frac{C_4}{2}.
\end{align}
By \eqref{eq:distcon1}, Corollary \ref{cor:lemZhang}, \eqref{eq:distcon4},
for all $s \in [0,1], t \in [t_0 - \alpha_0/2, t_0+\alpha_0/2]$ and $x\in B(\gamma(s),\sqrt{\alpha_0},t)$ we have
\begin{equation*}
G(x,t) \geq C^{-1}_2  G(\gamma(s),t)^2\,\exp \left(-\frac{d^2_{t}(x,\gamma(s))}{2(t-(t_0-\alpha_0))}\right)\geq \frac{C^{-1}_2 C^2_4}{4}e^{-1}=:C_5.
\end{equation*}

Let $\{x_i\}^{N}_{i=1}$ denote a maximal $(2\sqrt{\alpha_0})$-separated set in $\gamma([0,1])$ with respect to $g(t)$.
The argument in the proof of \cite[Theorem 1.1]{BZ1} tells us that $d_t (x_0, y_0) \leq 4N\sqrt{\alpha_0}$.
On $(t_0-1/2,t_0+1/2]$, we see
\begin{equation*}
\frac{d}{dt} \int_{M}\, G \,dm = \int_{M} \,\left( \Delta G - H\, G \right)\, dm \leq n \int_{M} \,G\, dm
\end{equation*}
since $H \geq -n$ on $M \times [t_0 - 1/2, t_0+1/2]$, and hence
\begin{equation}\label{eq:L1}
\int_{M}\, G\, dm \leq \exp \left(n (t - (t_0 - 1/2))\right)\leq e^{n}.
\end{equation}
Combining \eqref{eq:L1} and Theorem \ref{thm:noncollapse},
we see that
if $t \in [t_0 - \alpha_0/2, t_0+\alpha_0/2]$,
then
\begin{equation*}
e^{n} \geq \int_{M}\, G(\cdot, t)\, dm \geq \sum_{i=1}^N \int_{B(x_i,\sqrt{\alpha_0},t)}\, G(\cdot, t)\, dm \geq N( \kappa \alpha^{n/2}_0)C_5.
\end{equation*}
We conclude that for all $t \in [t_0 - \alpha_0/2, t_0+\alpha_0/2]$,
\begin{equation*}
d_t (x_0, y_0) \leq \frac{4e^{n}}{\kappa \alpha^{(n-1)/2}_0 C_5} < \frac{8e^{n}}{\kappa \alpha^{(n-1)/2}_0 C_5} \,d_{t_0}(x_0, y_0).
\end{equation*}
Setting
\begin{equation*}
\alpha:=\min \left\{\frac{\alpha_0}{2}, \frac{\kappa \alpha^{(n-1)/2}_0 C_5}{8e^{n}} \right\},
\end{equation*}
we arrive at the desired upper bound.
Thus we complete the proof.
\end{proof}

%%%%%%%%%%%%%%%%%%%%%%%%%%%%
%%%%%%%%%%%%%%%%%%%%%%%%%%%%
%%%%%%%%%%%%%%%%%%%%%%%%%%%%
\section{Cutoff function}\label{sec:cutoff}

Based on Theorem \ref{thm:distcon},
we construct the following cutoff function,
which has been obtained by Bamler-Zhang \cite{BZ1} for Ricci flow (see \cite[Theorem 1.3]{BZ1}):
\begin{thm}\label{thm:cutoff}
There exists a constant $\rho \in (0,1)$ depending only on $n,T$ and $g(0)$ such that the following holds:
Let $t_0\in (0,T)$ and $r_0 \in (0,\sqrt{t_0}]$.
We assume $H \leq r_0^{-2}$.
Then for any $x_0\in M$ and $\tau \in (0,  \rho^2 r_0^2]$,
there is $\phi \in C^\infty (M \times [t_0 - \tau,t_0])$ satisfying the following properties:
\begin{enumerate}\setlength{\itemsep}{+0.7mm}
\item $0 \leq \phi < 1$;
\item $\phi \geq \rho$ on $B(x_0,\rho r_0,t_0) \times [t_0-\tau, t_0]$;
\item $\phi = 0$ on $(M \setminus B(x_0, r_0,t_0) ) \times [t_0 - \tau, t_0]$;
\item $ \vert \nabla \phi \vert \leq r_0^{-1}$ and $\vert \partial_t \phi \vert + \vert \Delta \phi \vert \leq r_0^{-2}$.
\end{enumerate}
\end{thm}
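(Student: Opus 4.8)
The plan is to follow the Bamler--Zhang strategy \cite[Theorem 1.3]{BZ1}: construct $\phi$ as a time-independent (or nearly so) function of the distance $d_{t_0}(x_0,\cdot)$ that is cut off to vanish outside $B(x_0,r_0,t_0)$, and then use the distance distortion estimate (Theorem \ref{thm:distcon}) to control how this function behaves under the flow on the short time interval $[t_0-\tau,t_0]$. After parabolic rescaling (Remark \ref{rem:rescaling}) we may assume $r_0=1$, so that $t_0\ge 1$, $H\le 1$, and $H\ge -n$ on a controlled time interval by Proposition \ref{prop:lowerS}. The key point is that if $\tau$ is small compared to the distortion constant $\alpha$ of Theorem \ref{thm:distcon}, then balls at time $t_0$ and time $t\in[t_0-\tau,t_0]$ are comparable, so a spatial cutoff adapted to $g(t_0)$ remains an essentially valid cutoff for all nearby times.

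First I would fix a smooth nonincreasing function $\psi\colon[0,\infty)\to[0,1]$ with $\psi\equiv 1$ on $[0,1/2]$, $\psi\equiv 0$ on $[3/4,\infty)$, and $|\psi'|,|\psi''|$ bounded by a dimensional constant, and set $\tilde\phi(x):=\psi(d_{t_0}(x_0,x))$; this is Lipschitz but not smooth, so I would either smooth it or work with barrier/distributional inequalities, as is standard. The spatial gradient bound $|\nabla\tilde\phi|\le C$ is immediate from $|\nabla d_{t_0}|=1$ (a.e.). For the time derivative, along the flow one computes $\partial_t\tilde\phi = \psi'(d_{t_0}(x_0,x))\,\partial_t d_{t_0}(x_0,x)$, but since $\tilde\phi$ is built from the \emph{fixed} metric $g(t_0)$ it is genuinely time-independent, so $\partial_t\tilde\phi=0$; the content is entirely in the Laplacian. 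Here I would estimate $\Delta_{g(t)}\tilde\phi$ using the Laplacian comparison/Hessian bounds for the distance function. The Ricci lower bound $\Ric\ge h$ together with $H\le 1$ and $H\ge -n$ gives $\Ric\ge -C$ on the relevant parabolic region, hence a Laplacian comparison estimate $\Delta_{g(t)} d_{t_0}\le C$ in the barrier sense on the annulus where $\psi'\ne 0$ --- but one must be careful that $d_{t_0}$ is the distance for $g(t_0)$, not $g(t)$, so I would instead compare $d_{t_0}$ with $d_t$ via Theorem \ref{thm:distcon} and estimate $\Delta_{g(t)} d_t$. The upshot is $|\Delta_{g(t)}\tilde\phi|\le C$ on $M\times[t_0-\tau,t_0]$ for a constant depending only on $n,T,g(0)$.

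Next I would verify the localization properties. Property (3), $\tilde\phi=0$ outside $B(x_0,1,t_0)$, holds by construction since $\psi$ is supported in $[0,3/4]\subset[0,1)$. For property (2), on $B(x_0,\rho,t_0)$ with $\rho$ small we have $d_{t_0}(x_0,x)<\rho<1/2$, so $\tilde\phi\equiv 1$ there; but we need the bound to persist at \emph{all} times $t\in[t_0-\tau,t_0]$ --- this is automatic because $\tilde\phi$ is time-independent, so in fact $\tilde\phi\equiv 1$ on $B(x_0,\rho,t_0)\times[t_0-\tau,t_0]$, comfortably giving $\tilde\phi\ge\rho$. (The reason the statement allows $\phi\ge\rho$ rather than $\phi\ge$ constant is the smoothing step and the rescaling of $\phi$ below.) Finally I would rescale: set $\phi:=\rho\,\tilde\phi$ (or $\phi:=(1-\epsilon)\tilde\phi$ for small $\epsilon$ to ensure the strict inequality $\phi<1$ in (1)), and choose $\rho\in(0,1)$ small enough that the derivative bounds in (4) read $|\nabla\phi|\le 1$ and $|\partial_t\phi|+|\Delta\phi|\le 1$ in the rescaled picture; undoing the parabolic rescaling reintroduces the factors $r_0^{-1}$ and $r_0^{-2}$, and the constraint $\tau\le\rho^2 r_0^2$ is exactly what is needed so that $|t-t_0|\le\alpha r_0^2$ in Theorem \ref{thm:distcon}, i.e. so the distortion estimate applies on the whole interval. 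I expect the main obstacle to be the careful handling of the Laplacian estimate for the distance function when the metric evolves --- reconciling $d_{t_0}$ with the time-$t$ Laplacian through the distortion estimate while only having $C^0$-control of $d_t$ versus $d_{t_0}$, so that one has to work in the barrier sense and possibly replace $\tilde\phi$ by a suitable smoothing or by a solution of a heat-type equation with $\tilde\phi$ as initial data, as in \cite{BZ1}.
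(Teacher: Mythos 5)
The approach you propose is fundamentally different from the paper's, and unfortunately it contains a gap that is not merely a technical wrinkle but the reason why Bamler--Zhang (and this paper) take a very different route.

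The fatal step is the claim that ``the Ricci lower bound $\Ric\ge h$ together with $H\le 1$ and $H\ge -n$ gives $\Ric\ge -C$.'' This is false: $H=\tr h$ controls only the \emph{sum} of eigenvalues of $h$, and the super Ricci flow condition $\Ric\ge h$ transfers no pointwise lower bound because individual eigenvalues of $h$ can be arbitrarily negative even with $-n\le H\le 1$. In the Ricci flow case ($h=\Ric$, $H=R$), a two-sided scalar curvature bound likewise gives no pointwise Ricci bound. Consequently there is no Laplacian comparison theorem for $d_t$ (let alone for $d_{t_0}$ with respect to $\Delta_{g(t)}$), and the estimate $|\Delta_{g(t)}\tilde\phi|\le C$ cannot be obtained this way. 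The same issue already affects the gradient: $|\nabla d_{t_0}|_{g(t_0)}=1$ does not control $|\nabla d_{t_0}|_{g(t)}$, since Theorem \ref{thm:distcon} is only a $C^0$ distance comparison, not a pointwise metric comparison. Absent pointwise curvature control, a distance-function cutoff simply does not have tractable time-$t$ derivatives, and this is precisely the obstruction that motivates the construction used here.

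The paper's proof instead builds the cutoff from a positive solution of the heat equation, which has \emph{intrinsic} derivative control from parabolic estimates requiring only the $H$-bound. Concretely: by Proposition \ref{prop:reduced estimate} there is $z\in M$ with $\ell_{(x_0,t_0)}(z,t_0-\theta)\le n/2$; one sets $G(\cdot,\cdot):=G(\cdot,\cdot;z,t_0-\theta)$. Proposition \ref{prop:reduced kernel estimate} gives a lower bound $G(x_0,t_0)\gtrsim\theta^{-n/2}$, and Proposition \ref{prop:upper heat kernel} an upper bound of the same order; Lemma \ref{lem:distance basic} and Theorem \ref{thm:Zhang} then give $|\partial_t G|,|\Delta G|,|\nabla G|$ bounds depending only on $n,T,g(0),\theta$, without any Ricci hypothesis. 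The cutoff is taken to be $(\,G-\text{const}\,)_+$ supported on the connected component $\Omega_0\subset\{G(\cdot,t_0)>\tfrac12 C_n\theta^{-n/2}\}$ containing $x_0$; the Harnack inequality (Corollary \ref{cor:lemZhang}), $\kappa$-noncollapsing (Theorem \ref{thm:noncollapse}), and the conjugate-$L^1$ bound force $\Omega_0\subset B(x_0,1,t_0)$ for $\theta$ small, which gives property (3), while the $|\partial_t G|$ bound and small $\tau$ keep the superlevel set stable in time, giving property (2). This heat-kernel-level-set construction, not the distance cutoff, is what makes the argument go through under scalar/mean curvature bounds alone; your closing remark about ``a solution of a heat-type equation'' gestures in the right direction, but the key realization is that the heat kernel itself is the object whose derivatives are controllable here.
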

\begin{proof}
By the rescaling,
we may assume $r_0 = 1$ (see Remark \ref{rem:rescaling}).
We possess $t_0 \geq 1$ and $H \leq 1$.
Moreover, $H \geq -n$ on $M \times [t_0 - 1/2, t_0]$ by Proposition \ref{prop:lowerS}.
Let $\theta \in (0,1/2), \tau\in (0,\theta/4)$, which will be determined later.

Due to Proposition \ref{prop:reduced estimate},
there exists $z \in M$ such that
\begin{equation*}
\ell_{(x_0, t_0)} (z, t_0 - \theta ) \leq  \frac{n}2.
\end{equation*}
Set $G(x,t):=G(x,t;z,t_0-\theta)$.
Thanks to Proposition \ref{prop:reduced kernel estimate},
\begin{align}\label{eq:cutoff11}
G(x_0, t_0) &\geq \frac{1}{(4\pi ( t_0 - (t_0 - \theta))^{n/2}} \exp \left(-\ell_{(x_0, t_0)} (z, t_0 - \theta)\right)\\ \notag
                 &\geq  \frac{1}{(4\pi  \theta)^{n/2}} e^{-n/2} =:C_{n} \theta^{-n/2},
\end{align}
where $C_n>0$ depends only on $n$. 
We define
\begin{equation*}
\Omega := \left\{ x \in M \mid G(x,t_0) >  C_{n} \theta^{-n/2}/2 \right\}. %open manifold
\end{equation*}
From \eqref{eq:cutoff11} we derive $x_0 \in \Omega$.
Let $\Omega_0 \subset \Omega$ stand for the connected component of $\Omega$ containing $x_0$. % connected open manifold; in particular, path connected
We prove that
for any sufficiently small $\theta$,
we have $\Omega_0 \subset B(x_0, 1,t_0)$ by contradiction.
Let us assume that
$\Omega_0$ is not contained in $B(x_0, 1,t_0)$.
Then we can take a curve $\gamma : [0,1] \to \Omega_0$ from $x_0$ to a point in $\partial B(x_0, 1,t_0) \cap \Omega_0$.
Let $\{x_i\}^{N}_{i=1}$ be a maximal $(2\sqrt{\theta})$-separated set in $\gamma([0,1])$ with respect to $g(t_0)$.
The argument in the proof of \cite[Theorem 1.3]{BZ1} tells us that
\begin{equation}\label{eq:lowerN}
N \geq \frac{1}{4 \sqrt{\theta}}.
\end{equation}
Proposition \ref{prop:upper heat kernel} tells us that
on $M\times [t_0 - \theta/2, t_0]$ we see
\begin{equation}\label{eq:cutoff1}
 G(x,t) \leq \frac{C_1}{(t-(t_0-\theta))^{n/2}}\leq  C_2 \theta^{-n/2}.
\end{equation}
Due to \eqref{eq:cutoff1}, Corollary \ref{cor:lemZhang},
for all $y \in \Omega_0$ and $z \in B(y, \sqrt{\theta},t_0)$ we have
\begin{equation}\label{eq:cutoff2}
G(z,t_0)\geq C^{-1}_2\,\theta^{n/2}\,G(y,t_0)^2\,\exp\left(-\frac{d_{t_0}(y,z)^2}{2\theta}\right)\geq  \frac{e^{-1/2}}{2}C^{-1}_2 C_{n} \theta^{-n/2}=:C_3 \theta^{-n/2}.
\end{equation}
On the other hand,
on $(t_0-\theta,t_0]$ we see
\begin{equation*}
\frac{d}{dt} \int_{M}\, G \,dm = \int_{M} \,\left( \Delta G - H\, G \right)\, dm \leq n \int_{M} \,G\, dm
\end{equation*}
since $H \geq -n$ on $M \times [t_0 - \theta, t_0]$, and hence
\begin{equation}\label{eq:cutoff3}
\int_{M}\, G\, dm \leq \exp \left(n (t - (t_0 - \theta))\right)\leq e^{n\theta}\leq e^{n/2}.
\end{equation}
Combining \eqref{eq:lowerN}, \eqref{eq:cutoff2}, \eqref{eq:cutoff3} and Theorem \ref{thm:noncollapse},
we obtain
\begin{equation*}
e^{n/2} \geq \int_{M} G(\cdot, t_0) dm \geq \sum_{i =1}^N \int_{B(x_i,\sqrt{\theta},t_0)} G(\cdot, t_0)\,dm \geq N (C_3 \theta^{-n/2}) ( \kappa \theta^{n/2}) = N \kappa C_3  \geq \frac{\kappa C_3}{4 \sqrt{\theta}}.
\end{equation*}
If $\theta < 16^{-1} e^{-n} \kappa^2 C^2_3$,
then this is a contradiction.
Hereafter,
we fix $\theta \in (0, 16^{-1} e^{-n} \kappa^2 C^2_3)$ such that $\Omega_0 \subset B(x_0, 1,t_0)$.

We now use \eqref{eq:cutoff1} and Lemma \ref{lem:distance basic}.
On $M\times [t_0-\tau,t_0]$ it holds that
\begin{align}\label{eq:cutoff4}
|\partial_t G|(x,t)&\leq  C_2 \theta^{-n/2}\left(H(x,t) +\frac{C_{4}}{t-(t_0-\theta/2)}\right)\leq C_2 \theta^{-n/2}(1+4C_{4} \theta^{-1})\\ \notag
&\leq  C_2(1+4C_{4}) \theta^{-n/2-1}=:C_5 \theta^{-n/2-1}.
\end{align}
Note that
$C_5>C_2$.
Let $\tau \in (0,0.1C_{n}C^{-1}_{5}\theta)$.
Then on $M\times [t_0-\tau,t_0]$,
\begin{equation*}
|G(x,t)-G(x,t_0)|\leq \int^{t_0}_{t_0-\tau}\,|\partial_t G|(x,t)\,dt \leq C_5 \theta^{-n/2-1} \tau<0.1 C_{n}\theta^{-n/2};
\end{equation*}
in particular, on $\partial \Omega_0\times [t_0-\tau,t_0]$ we possess
\begin{equation*}
G(x,t)< G(x,t_0)+0.1 C_{n}\theta^{-n/2}=0.5 C_{n} \theta^{-n/2}+0.1 C_{n}\theta^{-n/2}=0.6 C_{n}\theta^{-n/2}.
\end{equation*}
Now,
we can define a function $\hat{\psi}\in C^{0}(M\times [t_0-\tau,t_0])$ by
\begin{equation*}
\hat{\psi}(x,t):= \begin{cases}
                                  \max\{G(x,t)-0.6 C_{n}\theta^{-n/2} \} & \textrm{on}\ \Omega_0\times [t_0-\tau,t_0],\\
                                  0 & \textrm{on}\ (M\setminus \Omega_0) \times [t_0-\tau,t_0],
                                  \end{cases}
\end{equation*}
which is compactly supported on $\Omega_0\times [t_0-\tau,t_0]$.
This function enjoys the following properties:
\begin{enumerate}\setlength{\itemsep}{+0.7mm}
\item $0 \leq \hat{\psi} < C_5 \theta^{-n/2}$;
\item $\hat{\psi}(x_0,t_0)\geq 0.4 C_{n}\theta^{-n/2}$;
\item $ \vert \nabla \hat{\psi} \vert \leq 2C_5 \theta^{-n/2-1/2}$;
\item $\vert \partial_t \hat{\psi} \vert + \vert \Delta \hat{\psi} \vert \leq 2C_5 \theta^{-n/2-1}$.
\end{enumerate}
The first inequality follows from \eqref{eq:cutoff1} and $C_2<C_5$.
The second one is a consequence of $x_0\in \Omega$.
By \eqref{eq:cutoff1} and Theorem \ref{thm:Zhang},
on $M\times [t_0-\tau,t_0]$,
\begin{equation*}
|\nabla G|(x,t)  \leq \frac{1}{\sqrt{t-(t_0-\theta/2)}}\sqrt{\log \frac{C_2 \theta^{-n/2}}{G(x,t)}} G(x,t)\leq 2C_2 \theta^{-n/2-1/2}<2C_5 \theta^{-n/2-1/2},
\end{equation*}
and this implies the third one.
The fourth one can be derived from \eqref{eq:cutoff4}.
If we further define $\tilde{\phi}:=32^{-1} C^{-1}_5 \theta^{n/2+1}\hat{\psi}$,
then it satisfies the following properties:
\begin{enumerate}\setlength{\itemsep}{+0.7mm}
\item $0 \leq \tilde{\phi} < 1$;
\item $\tilde{\phi}(x_0,t_0)\geq C_6 \,\theta$;
\item $ \vert \nabla \tilde{\phi} \vert \leq 1/8$;
\item $\vert \partial_t \tilde{\phi} \vert + \vert \Delta \tilde{\phi} \vert \leq 1/8$.
\end{enumerate}
Once we obtain this function,
we can conclude the desired assertion by the same argument as in the proof of \cite[Theorem 1.3]{BZ1}.
We complete the proof.
\end{proof}

%%%%%%%%%%%%%%%%%%%%%%%%%%%%
%%%%%%%%%%%%%%%%%%%%%%%%%%%%
%%%%%%%%%%%%%%%%%%%%%%%%%%%%
\section{Mean value inequality}\label{sec:mean value}

In the present section,
we produce a mean value inequality for conjugate heat equation.
To do so,
we first yield the following integral estimate based on the Moser iteration argument,
which has been obtained by Bamler-Zhang \cite{BZ1} for Ricci flow (see \cite[Lemma 4.1]{BZ1}):
\begin{lem}\label{lem:premean}
Let $p\geq 2$.
Then there are positive constants $\beta \in (0,1)$ and $C>0$ depending only on $n,T,g(0)$ and $p$ such that the following holds:
For $t_0 \in (0,T)$ and $r_0 \in (0, \sqrt{t_0}]$,
we assume $H\leq r^{-2}_0$.
Let $u \in C^\infty ( M \times [t_0, t_0 + r_0^2])$ denote a positive solution to the conjugate heat equation.
Then for all $x_0 \in M$ we have
\begin{equation*}
\left( \int_{Q^+ (x_0,\beta r_0, t_0)} u^p\, dm\, dt \right)^{1/p}\leq \frac{C}{r_0^{(n+2) (p-2)/2p}} \left(\int _{Q^+ (x_0, r_0,  t_0 )} u^2\, dm \,dt \right)^{1/2}.
\end{equation*}
\end{lem}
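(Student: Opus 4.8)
The plan is to carry out a parabolic Moser iteration for the conjugate heat equation, exactly along the lines of \cite[Lemma 4.1]{BZ1}. Two ingredients drive the iteration: a Caccioppoli-type energy estimate, obtained by testing the equation against a cutoff times a power of $u$, and the Sobolev inequality of Proposition \ref{prop:Sob}, which converts the energy bound into an integrability gain. By the parabolic rescaling of Remark \ref{rem:rescaling} I would first reduce to $r_0=1$, so that $t_0\geq 1$ and $H\leq 1$ on $M\times[t_0,t_0+1]$; Proposition \ref{prop:lowerS} then gives $H\geq-n/(2t)\geq-n/2$ there, hence $|H|$ is bounded by a dimensional constant on the relevant time slab. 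Moreover it suffices to prove the inequality with the outer cube $Q^+(x_0,1,t_0)$ replaced by $Q^+(x_0,\gamma,t_0)$ for a small constant $\gamma\in(0,1)$ to be fixed (depending only on $n,T,g(0)$), since $Q^+(x_0,\gamma,t_0)\subset Q^+(x_0,1,t_0)$ makes this a formally stronger statement; the gain is that over the short time interval $[t_0,t_0+\gamma^2]$ the moving balls $B(x_0,r,t)$ with $r\leq\gamma$ are all comparable to the corresponding balls at time $t_0$ by the distance distortion estimate (Theorem \ref{thm:distcon}).

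Next I would establish the energy estimate. For $\alpha\geq1$ and a space-time cutoff $\phi$ (smooth, compactly supported in space in the relevant ball and vanishing at $t_0+\gamma^2$, which is the \emph{initial} time for the backward-parabolic conjugate heat equation), multiply $-\partial_t u-\Delta u+Hu=0$ by $\phi^2u^{2\alpha-1}$, integrate over $M$, integrate by parts, and use that the time derivative of $dm$ equals $-H\,dm$. Writing $w:=u^\alpha$, absorbing the cross term $\int w\phi\,\langle\nabla\phi,\nabla w\rangle\,dm$ into the gradient term by Young's inequality, and integrating in time, one obtains
\[
\sup_{t\in[t_0,t_0+\gamma^2]}\int_M\phi^2 w^2\,dm\;+\;\int\!\!\!\int\phi^2|\nabla w|^2\,dm\,dt\;\leq\;C\alpha\int\!\!\!\int_{\supp\phi}\bigl(|\nabla\phi|^2+|\partial_t\phi|+1\bigr)\,w^2\,dm\,dt,
\]
where the summand $1$ and the constant $C$ account for the bounded contribution of $H$. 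Within this proof the super Ricci flow hypothesis and the non-negativity of the M\"uller quantity enter only through the two-sided bound on $H$ above, through Proposition \ref{prop:Sob} below, and through the black-box Theorems \ref{thm:distcon} and \ref{thm:cutoff}. If $\phi$ is adapted to a pair of nested cubes $Q^+(x_0,r',t_0)\subset Q^+(x_0,r,t_0)$, with $\phi\geq c>0$ on the inner one, $\phi=0$ outside the outer one, $|\nabla\phi|\lesssim (r-r')^{-1}$ and $|\partial_t\phi|$ controlled by the reciprocal of the time gap, then the right-hand side is bounded by $C(r,r',\alpha)\,\|w\|_{L^2(Q^+(x_0,r,t_0))}^2$.

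Then I would feed this into Proposition \ref{prop:Sob}, applied to $f:=\phi w$ at each time (the curvature term $\tfrac H4 f^2$ is harmless since $|H|$ is bounded); combined with the $\sup_t$ bound on $\|f\|_{L^2}^2$ via Hölder in time this yields the standard parabolic Sobolev embedding and hence (assuming $n\geq3$; $n=2$ is handled by the usual modification of the exponent)
\[
\|u\|_{L^{2\alpha(n+2)/n}(Q^+(x_0,r',t_0))}\;\leq\;(C\alpha)^{1/(2\alpha)}\,\|u\|_{L^{2\alpha}(Q^+(x_0,r,t_0))}.
\]
Iterating this a \emph{finite} number $m$ of times with $\alpha_k=((n+2)/n)^k$ over a geometrically decreasing sequence of radii $\gamma=r^{(0)}>r^{(1)}>\dots>r^{(m)}$ (each step necessarily loses a fixed factor in the radius because the cutoff of Theorem \ref{thm:cutoff} has transition zone comparable to its own radius, which is why the iteration must terminate; one chooses $m$ so that $q:=2((n+2)/n)^m\geq p$ and then sets $\beta:=r^{(m)}$, all depending only on $n,T,g(0)$ and $p$), the product of the $m$ step constants being a finite constant, gives $\|u\|_{L^q(Q^+(x_0,\beta,t_0))}\leq C\|u\|_{L^2(Q^+(x_0,\gamma,t_0))}$. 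Undoing the rescaling rewrites this as $\|u\|_{L^q(Q^+(x_0,\beta r_0,t_0))}\leq C\,r_0^{-(n+2)(1/2-1/q)}\|u\|_{L^2(Q^+(x_0,r_0,t_0))}$, and a concluding interpolation $\|u\|_{L^p}\leq\|u\|_{L^q}^{\theta}\|u\|_{L^2}^{1-\theta}$ with $\tfrac1p=\tfrac\theta q+\tfrac{1-\theta}2$ produces exactly the exponent $(n+2)(p-2)/(2p)=(n+2)(\tfrac12-\tfrac1p)$ claimed.

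I expect the main obstacle to be not the iteration machinery but the construction of the space-time cutoffs used at each step. Theorem \ref{thm:cutoff} produces a cutoff only on a time interval whose length is controlled relative to a geodesic ball \emph{at a single fixed time}, whereas the forward parabolic cubes $Q^+$ are built from the moving balls $B(x_0,\cdot,t)$. Reconciling these requires the reduction to a short time interval described above together with the distance distortion estimate (Theorem \ref{thm:distcon}) to compare the moving balls, with the various small constants ($\gamma$, the radii $r^{(k)}$, and finally $\beta$) chosen in the correct order and every resulting constant checked to depend only on $n,T,g(0)$ and $p$. This geometric bookkeeping is carried out exactly as in \cite[Lemma 4.1]{BZ1}.
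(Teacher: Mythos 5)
Your proposal follows the same Moser-iteration strategy as the paper's proof: rescale to $r_0=1$, derive a Caccioppoli energy estimate by pairing the conjugate heat equation with a space-time cutoff built from Theorem \ref{thm:cutoff} (and a time-cutoff vanishing at the initial time $t_0+2\sigma^2$), control the ball inclusions via the distance distortion estimate of Theorem \ref{thm:distcon}, feed the result into Proposition \ref{prop:Sob} to gain one step of integrability, and then finish by finitely iterating and interpolating. The paper records precisely one iteration step, namely from exponent $p$ to $p(1+2/n)$ on a shrunken cube, and defers the finite iteration and the bookkeeping of the $r_0$-exponent to \cite[Lemma 4.2]{BZ1}, so your argument and the paper's coincide in all essentials.
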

\begin{proof}
By the rescaling,
we may assume $r_0 = 1$ (see Remark \ref{rem:rescaling}).
It holds that $t_0\geq 1$ and $H\leq 1$.
Moreover,
$H \geq -n$ on $M \times [t_0 -1/2, t_0+1]$ in view of Proposition \ref{prop:lowerS}.
Let $\alpha, \rho \in (0,1)$ be the constants obtained in Theorems \ref{thm:distcon} and \ref{thm:cutoff},
respectively.
We define
\begin{equation*}
\theta:=\alpha \in (0,1).
\end{equation*}
Note that $(\rho \theta)^2  \in (0,\alpha)$.
By Theorem \ref{thm:distcon} we have
\begin{equation}\label{eq:premean3}
B(x_0,\theta,t_1)\subset B(x_0,1,t_2)
\end{equation}
for all $t_1,t_2 \in [t_0,t_0+(\rho\theta)^2]$.
We further set
\begin{equation*}
\sigma:=\frac{\alpha \rho \theta}{\sqrt{2}} \in (0,\rho \theta).
\end{equation*}
Notice that
$2\sigma^2 \in (0, \alpha(\rho \theta)^2)$.
Using Theorem \ref{thm:distcon} again,
we obtain
\begin{equation}\label{eq:premean4}
B(x_0,\sigma,t_1)\subset B(x_0,\rho \theta,t_2)
\end{equation}
for all $t_1,t_2 \in [t_0,t_0+2\sigma^2]$.
With the help of Theorem \ref{thm:cutoff},
there is $\phi \in C^\infty (M \times [t_0, t_0 + 2\sigma^2])$ such that the following holds:
\begin{enumerate}\setlength{\itemsep}{+1.0mm}
\item $0 \leq \phi < 1$;
\item $\phi \geq \rho$ on $B' \times [t_0,t_0+2\sigma^2]$;
\item $\phi = 0$ on $(M \setminus B) \times [t_0, t_0 + 2 \sigma^2 ]$;
\item $|\nabla \phi | \leq \theta^{-1}$ and $|\partial_t \phi | \leq \theta^{-2}$,
\end{enumerate}
where $B := B(x_0,\theta,t_0 + 2 \sigma^2)$ and $B':=B(x_0, \rho \theta,t_0+2\sigma^2)$.
By \eqref{eq:premean3} and \eqref{eq:premean4},
we possess
\begin{equation}\label{eq:premean1}
Q^+ (x_0 ,\sigma,t_0  ) \subset B' \times [t_0,t_0+2\sigma^2] \subset B \times [t_0,t_0+2\sigma^2] \subset Q^+ (x_0 ,1,t_0  ).
\end{equation}
Let $\eta \in C^\infty ([t_0, t_0 + 2\sigma^2])$ be a function satisfying the following properties:
\begin{enumerate}\setlength{\itemsep}{+1.0mm}
\item $0\leq \eta \leq 1$;
\item $\eta \equiv 1$ on $[t_0, t_0 + \sigma^2]$;
\item $\eta (t_0 + 2 \sigma^2) = 0$;
\item $|\eta' | \leq C_1$.
\end{enumerate}
Then we define $\psi (x,t) := \eta (t) \phi (x,t)$.
Note that $\psi \geq \rho$ on $Q^+ (x_0, \sigma,t_0)$.

Let $p\geq 2$.
Since $u$ is a solution to the conjugate heat equation,
we see
\begin{equation*}
\partial_t u^{p/2}+ \Delta u^{p/2} - \frac{p}2 H u^{p/2}=\frac{p}{2}\left(\frac{p}{2}-1 \right)u^{\frac{p}{2}-2}|\nabla u|^2 \geq 0.
\end{equation*}
On $[t_0, t_0 + 2 \sigma^2]$,
it holds that
\begin{align*}
 \frac{d}{dt} \int_{M}\, u^p \psi^2\, dm &= 2 \int_{M} (\partial_t u^{p/2}) u^{p/2} \psi^2 \,dm + 2 \int_{M} u^p (\partial_t \psi) \psi \,dm -  \int_{M} \,u^p \psi^2 H\, dm \\
 &\geq 2 \int_{M} \left(- \Delta u^{p/2} + \frac{p}2 H u^{p/2} \right) u^{p/2} \psi^2 dm - C_2\int_{B} u^p \,dm\\
 &\geq 2 \int_{M} \,\langle \nabla u^{p/2}, \psi\, \nabla (u^{p/2} \psi) + (u^{p/2} \psi) \nabla \psi \rangle\, dm - C_3  \int_{B} u^p \,dm \\
 &= 2 \int_{M} \langle \nabla (u^{p/2} \psi) - u^{p/2} \nabla \psi,\nabla (u^{p/2} \psi)  + u^{p/2} \nabla \psi \rangle \,dm - C_3  \int_{B} u^p \,dm \\
  &= 2 \int_{M} \big|\nabla (u^{p/2} \psi ) \big|^2 dm - 2 \int_{M} \, u^{p} |\nabla \psi|^2 dm - C_3 \int_{B} u^p \,dm \\
  &\geq  2 \int_{M} \big|\nabla (u^{p/2} \psi ) \big|^2 \,dm - C_3 \int_{B} u^p \,dm.
 \end{align*}
From Proposition \ref{prop:Sob},
we conclude
\begin{equation*}
\frac{d}{dt} \int_{M} u^p \psi^2 \,dm \geq C_4 \bigg( \int_{M}\, u^{\frac{p n}{n-2}} \psi^{\frac{2n}{n-2}} \,dm \bigg)^{\frac{n-2}{n}} - C_5 \int_{B}\, u^p \,dm.
\end{equation*}
Let us integrate this inequality from a fixed $t_1 \in [t_0, t_0 + 2 \sigma^2]$ to $t_0 + 2 \sigma^2$.
We deduce
\begin{align*}
&\quad\,\,-\int_{B'} \,u^p(\cdot, t_1)\psi^2(\cdot, t_1)\, dm_{t_1}  \\
&\geq    - \int_{M} u^p(\cdot, t_1) \psi^2(\cdot, t_1) \,dm_{t_1}  \\
  &\geq C_4 \int_{t_1}^{t_0 + 2\sigma^2}  \bigg( \int_{M} u^{\frac{pn}{n-2}} \psi^{\frac{2n}{n-2}} \,dm \bigg)^{\frac{n-2}n} dt - C_5 \int_{t_1}^{t_0 +  2 \sigma^2} \int_B\, u^p \,dm\, dt.
\end{align*}
This leads us to
\begin{align*}
 \sup_{t \in [t_0, t_0 + 2\sigma^2]} \int_{B'} u^p (\cdot, t)\psi^2(\cdot, t)\, dm_t &\leq C_6\,\int_{t_0}^{t_0 +  2 \sigma^2} \int_B\, u^p \,dm\, dt,\\
 \int_{t_0}^{t_0 + 2\sigma^2}  \bigg( \int_{B'} u^{\frac{pn}{n-2}}\psi^{\frac{2n}{n-2}}  \,dm \bigg)^{\frac{n-2}n} dt &\leq C_7\int_{t_0}^{t_0 +  2 \sigma^2} \int_B\, u^p \,dm\, dt.
\end{align*}
By the H\"older inequality,
\begin{align*}
&\quad\,\, \int_{t_0}^{t_0 + 2 \sigma^2}\,  \int_{B'} u^{p\left( 1 + \frac{2}n\right)}\psi^{2\left( 1 + \frac{2}n\right)} \,dm\, dt\\
&= \int_{t_0}^{t_0 + 2 \sigma^2}\,  \int_{B'} \,\left(u^{p}\psi^2\right)\left(u^{\frac{2p}{n}}\psi^\frac{4}{n}\right)\, dm\, dt\\
&\leq  \int_{t_0}^{t_0 + 2 \sigma^2} \bigg( \int_{B'} u^{\frac{pn}{n-2}} \psi^{\frac{2n}{n-2}} \,dm \bigg)^{\frac{n-2}{n}} \bigg( \int_{B'} u^p \psi^2 \,dm \bigg)^{\frac{2}n}\, dt \\
&\leq  \left[ \int_{t_0}^{t_0 + 2 \sigma^2} \bigg( \int_{B'} u^{\frac{pn}{n-2}}\psi^{\frac{2n}{n-2}}\, dm \bigg)^{\frac{n-2}{n}} dt \right] \left[ C_6  \int_{t_0}^{t_0+ 2\sigma^2} \int_Bu^p dm\, dt \right]^{\frac{2}n}  \\
& \leq  \bigg( C_8 \int_{t_0}^{t_0+ 2 \sigma^2} \int_B u^p \,dm \,dt \bigg)^{1 + \frac{2}n}.
\end{align*}
In view of \eqref{eq:premean1} and $\psi \geq \rho$ on $Q^+ (x_0, \sigma,t_0)$,
we arrive at
\begin{equation*}
\bigg( \int_{Q^+ (x_0, \sigma,t_0 )} u^{p \left( 1 + \frac{2}n \right)}\, dm\, dt \bigg)^{1 / p\left( 1 + \frac{2}n \right)} \leq C_9 \bigg( \int_{Q^+ (x_0, 1,t_0)} u^p \,dm \,dt \bigg)^{1/p}.
\end{equation*}
Once we obtain this estimate,
we can conclude the desired one by the same argument as in the proof of \cite[Lemma 4.2]{BZ1}.
\end{proof}

We give a proof of the following mean value inequality,
which has been formulated by Bamler-Zhang \cite{BZ1} for Ricci flow (see \cite[Lemma 4.2]{BZ1}):
\begin{thm}\label{thm:mean}
There exist positive constants $\gamma\in (0,1)$ and $C > 0$ depending only on $n,T$ and $g(0)$ such that the following holds:
For $t_0 \in (0,T)$ and $r_0 \in (0, \sqrt{t_0}]$,
we assume $H\leq r^{-2}_0$.
Let $u \in C^\infty ( M \times [t_0, t_0 + r_0^2])$ be a positive solution to the conjugate heat equation.
Then for all $x_0\in M$ we have
\begin{equation*}
\sup_{Q^+(x_0, \gamma r_0,t_0)} u^2 \leq \frac{C}{r_0^{n+2}} \,\int_{Q^+(x_0, r_0,t_0)} u^2 \,dm\, dt.
\end{equation*}
\end{thm}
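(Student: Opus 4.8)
The plan is to prove first a pointwise parabolic mean value estimate by a De Giorgi--Nash--Moser iteration built on the reverse H\"older step already extracted in the proof of Lemma \ref{lem:premean}, and then to promote it to a supremum over the full parabolic cube by means of the distance distortion estimate, Theorem \ref{thm:distcon}.

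By the parabolic rescaling of Remark \ref{rem:rescaling} we may assume $r_0=1$, so that $t_0\geq 1$, $H\leq 1$, and $H\geq -n$ on the relevant time interval by Proposition \ref{prop:lowerS}. The heart of the argument is the pointwise bound
\[
u(x_0,t_0)^2 \leq \frac{C_0}{r_0^{n+2}}\int_{Q^+(x_0,r_0,t_0)} u^2\,dm\,dt
\]
for some $C_0>0$ depending only on $n,T,g(0)$ (we prove it at $r_0=1$ and rescale back). As in the proof of Lemma \ref{lem:premean}, $u^{p/2}$ satisfies $\partial_t u^{p/2}+\Delta u^{p/2}-\frac{p}{2}Hu^{p/2}\geq 0$, and that proof in fact yields, for every $p\geq 2$ and every scale $r\leq 1$ (with cutoff furnished by Theorem \ref{thm:cutoff} and Sobolev inequality by Proposition \ref{prop:Sob}), the one-step estimate
\[
\Bigl(\int_{Q^+(x_0,\sigma r,t_0)} u^{p(1+2/n)}\,dm\,dt\Bigr)^{\frac{1}{p(1+2/n)}}\leq C_9(p,r)\Bigl(\int_{Q^+(x_0,r,t_0)} u^{p}\,dm\,dt\Bigr)^{1/p},
\]
where $\sigma\in(0,1)$ is the fixed constant appearing there and $C_9(p,r)$ has the form $(\text{polynomial in }p)^{1/p}$ times a harmless power of $r$, the exponent $1/p$ coming from extracting the $p(1+2/n)$-th root at the last line. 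Iterating this over the exponents $p_k:=2(1+2/n)^k$ and the nested cubes $Q_k:=Q^+(x_0,\sigma^k r_0,t_0)$, all of which satisfy $H\leq(\sigma^k r_0)^{-2}$, produces
\[
\Bigl(\int_{Q_k} u^{p_k}\,dm\,dt\Bigr)^{1/p_k}\leq \Bigl(\prod_{j=0}^{k-1} C_9(p_j,\sigma^j r_0)\Bigr)\Bigl(\int_{Q_0} u^2\,dm\,dt\Bigr)^{1/2},
\]
and since $\sum_j p_j^{-1}\log p_j<\infty$ the infinite product converges to a finite constant depending only on $n,T,g(0)$. As $k\to\infty$ the cubes $Q_k$ collapse to $(x_0,t_0)$ while $|Q_k|^{1/p_k}\to 1$ (using the non-collapsing estimate, Theorem \ref{thm:noncollapse}), so by continuity of $u$ the left-hand side tends to $u(x_0,t_0)$, which gives the pointwise bound.

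To upgrade this to the supremum, fix $\gamma:=\alpha/2$ and $\rho_1:=(\sqrt{\alpha}/2)\,r_0$, with $\alpha\in(0,1)$ the constant of Theorem \ref{thm:distcon}. For $(x_1,t_1)\in Q^+(x_0,\gamma r_0,t_0)$ a first-exit-time argument shows $Q^+(x_1,\rho_1,t_1)\subseteq Q^+(x_0,r_0,t_0)$: if the continuous function $t\mapsto d_t(x_0,x_1)$ first attained the value $s:=r_0-\rho_1$ at some $t^\ast\in(t_1,t_1+\rho_1^2]$, then Theorem \ref{thm:distcon} applied at base time $t^\ast$ and scale $s$ (valid since $H\leq r_0^{-2}\leq s^{-2}$ and $d_{t^\ast}(x_0,x_1)=s$), together with $|t_1-t^\ast|\leq\rho_1^2\leq\alpha s^2$, would give $d_{t_1}(x_0,x_1)\geq\alpha s\geq\gamma r_0$, contradicting $(x_1,t_1)\in Q^+(x_0,\gamma r_0,t_0)$; hence $d_t(x_0,x_1)<s$ for $t\in[t_1,t_1+\rho_1^2]$, which yields the inclusion (and incidentally $t_1+\rho_1^2\leq t_0+r_0^2$ and $H\leq\rho_1^{-2}$, so the pointwise bound may be applied at $(x_1,t_1)$ with scale $\rho_1$). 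Therefore
\[
u(x_1,t_1)^2\leq\frac{C_0}{\rho_1^{n+2}}\int_{Q^+(x_1,\rho_1,t_1)} u^2\,dm\,dt\leq\frac{C_0\,(2/\sqrt{\alpha})^{n+2}}{r_0^{n+2}}\int_{Q^+(x_0,r_0,t_0)} u^2\,dm\,dt,
\]
and taking the supremum over $(x_1,t_1)\in Q^+(x_0,\gamma r_0,t_0)$ completes the proof.

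The principal obstacle is the convergence of the Moser iteration, i.e.\ that $\prod_j C_9(p_j,\sigma^j r_0)<\infty$; this rests entirely on the $1/p$-power structure of the one-step constant, which is built into the proof of Lemma \ref{lem:premean} and is the usual mechanism of De Giorgi--Nash--Moser, so beyond this point the iteration is routine. The only genuinely new bookkeeping is the first-exit-time argument above, forced by the fact that Theorem \ref{thm:distcon} controls the distortion of a distance only over a time interval comparable to the square of that distance; consequently the auxiliary parabolic cube must be started at scale $\rho_1\sim\sqrt{\alpha}\,r_0$ rather than at scale $\sim r_0$.
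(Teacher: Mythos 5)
Your proof is correct in outline, but it takes a genuinely different route from the paper's. The paper stops the Moser iteration at a single fixed exponent $p>n+2$ (that is what Lemma \ref{lem:premean} provides) and then passes from $L^p$ to $L^\infty$ by a Duhamel representation formula for the conjugate heat equation: writing $(u\psi)(x,t) = -\int_t^{t_0+2\gamma^2}\int_M G\bigl(u\Delta\psi+u\partial_l\psi+2\langle\nabla u,\nabla\psi\rangle\bigr)\,dm\,dl$, integrating by parts, and then estimating $\int Gu$ and $\int u|\nabla G|$ by H\"older with the dual exponent $q=p/(p-1)$, Proposition \ref{prop:upper heat kernel}, and the Zhang gradient estimate (Theorem \ref{thm:Zhang}); the condition $p>n+2$ is exactly what makes $\int (l-t)^{-(q-1)n/2}\,dl$ and $\int (l-t)^{-((n+1)q-n)/2}\,dl$ integrable. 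By contrast, you push the iteration of Lemma \ref{lem:premean}'s one-step reverse H\"older inequality all the way to $p=\infty$ and then upgrade the resulting pointwise bound at $(x_0,t_0)$ to a supremum over $Q^+(x_0,\gamma r_0,t_0)$ via an exit-time/covering argument with Theorem \ref{thm:distcon}. Both are standard mechanisms and both work here; the paper's route avoids entirely the bookkeeping of the infinite Moser product, at the cost of invoking the heat kernel and its gradient estimate, while yours is more self-contained. Two points in your argument deserve a little more care than you give them. First, the one-step constant at scale $\sigma^k r_0$ carries a factor linear in $p_k$ (coming from the $\tfrac{p}{2}Hu^{p/2}$ term in the Bochner computation, with $H\geq -n$) and a scaling factor $(\sigma^k r_0)^{-2/p_k}$; the product $\prod_k\bigl(Cp_k\,\sigma^{-2k}\bigr)^{1/p_k}$ converges because $p_k$ grows geometrically, but one must also check that after re-applying Lemma \ref{lem:premean} at scale $\sigma^k r_0$ the constants still depend only on $n,T,g(0)$ and not on the rescaled time horizon $T\sigma^{-2k}$; this follows from the monotonicity of the entropy underlying Proposition \ref{prop:Sob}, but it is not automatic. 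Second, since the ratio $\sigma$ of inner to outer radius in Theorem \ref{thm:cutoff} is a fixed constant (you cannot push $\rho$ toward $1$), your cubes $Q_k$ collapse to the point $(x_0,t_0)$; passing to the limit requires $|Q_k|^{1/p_k}\to 1$, which uses both the noncollapsing lower bound $|Q_k|\gtrsim (\sigma^k r_0)^{n+2}$ (Theorem \ref{thm:noncollapse}) and a trivial upper bound $|Q_k|\leq m_t(M)\cdot(\sigma^k r_0)^2\leq C$ — you cite only the former. Your exit-time argument is correct as written; with $\gamma=\alpha/2$ and $\rho_1=(\sqrt\alpha/2)r_0$ one has $s=r_0-\rho_1\geq r_0/2$, hence $\rho_1^2\leq\alpha s^2$ and $\alpha s\geq\gamma r_0$, giving the contradiction, and $\gamma^2+\alpha/4\leq 1$ keeps the time interval inside $[t_0,t_0+r_0^2]$.
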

\begin{proof}
By the rescaling,
we may assume $r_0 = 1$ (see Remark \ref{rem:rescaling}).
We have $t_0\geq 1$ and $H\leq 1$.
Furthermore,
we have $H \geq -n$ on $M \times [t_0 -1/2, t_0+1]$ by Proposition \ref{prop:lowerS}.
Let $\alpha,\rho \in (0,1)$ be the constants obtained in Theorems \ref{thm:distcon} and \ref{thm:cutoff},
respectively.
Let $\beta \in (0,1)$ be the constant obtained in Lemma \ref{lem:premean} for a fixed $p>n+2$.
We define
\begin{equation*}
\theta:=\alpha \beta \in (0,1).
\end{equation*}
Note that $(\rho \theta)^2\in (0,\alpha \beta^2)$.
By Theorem \ref{thm:distcon},
it holds that
\begin{equation}\label{eq:mean1000}
B(x_0,\theta,t_1)\subset B(x_0,\beta,t_2)
\end{equation}
for all $t_1,t_2\in [t_0,t_0+(\rho\theta)^2]$.
We further define
\begin{equation*}
\gamma:=\frac{\alpha \rho \theta}{\sqrt{2}} \in (0,\rho \theta).
\end{equation*}
Notice that
$2 \gamma^2\in (0,\alpha (\rho \theta)^2)$.
In virtue of Theorem \ref{thm:distcon},
we possess
\begin{equation}\label{eq:mean10000}
B(x_0,\gamma,t_1)\subset B(x_0,\rho \theta,t_2)
\end{equation}
for all $t_1,t_2\in [t_0,t_0+2\gamma^2]$.
By Theorem \ref{thm:cutoff},
there is $\phi \in C^\infty (M \times [t_0, t_0 + 2\gamma^2])$ such that the following holds:
\begin{enumerate}\setlength{\itemsep}{+1.0mm}
\item $0 \leq \phi < 1$;
\item $\phi \geq \rho$ on $B' \times [t_0,t_0+2\gamma^2]$;
\item $\phi = 0$ on $(M \setminus B) \times [t_0, t_0 + 2 \gamma^2 ]$;
\item $|\nabla \phi | \leq \theta^{-1}$ and $|\partial_t \phi |+|\Delta \phi| \leq \theta^{-2}$,
\end{enumerate}
where $B:=B(x_0,\theta,t_0+2\gamma^2)$ and $B':=B(x_0,\rho\,\theta,t_0+2\gamma^2)$.
From \eqref{eq:mean1000} and \eqref{eq:mean10000}, we deduce
\begin{align}\label{eq:mean0}
 Q^+ (x_0, \gamma,t_0) \subset B' \times [t_0,t_0+2\gamma^2]\subset B \times [t_0,t_0+2\gamma^2] \subset Q^+ (x_0, \beta,t_0 ).
\end{align}
Let $\eta \in C^\infty ([t_0, t_0 + 2\gamma^2])$ be a function satisfying the following properties:
\begin{enumerate}\setlength{\itemsep}{+1.0mm}
\item $0\leq \eta \leq 1$;
\item $\eta \equiv 1$ on $[t_0, t_0 + \gamma^2]$;
\item $\eta (t_0 + 2 \gamma^2) = 0$;
\item $|\eta' | \leq C_1$.
\end{enumerate}
Then we define $\psi (x,t) := \eta (t) \phi (x,t)$.
In view of \eqref{eq:mean0},
we notice $\psi \geq \rho$ on $Q^+ (x_0, \gamma,t_0)$.
For a fixed $(x,t) \in Q^+ (x_0, \gamma,t_0)$, we set $G(z, l):=G(z,l;x,t)$.
Then
\begin{equation*}
 (u \psi) (x,t) = -\int_t^{t_0 + 2\gamma^2} \int_{M} G \big( u \Delta \psi + u \partial_l \psi + 2 \langle \nabla u, \nabla \psi\rangle \big) dm\, dl.
 \end{equation*}
 Here we used
 \begin{equation*}
\partial_l (u\psi)+\Delta (u \psi) - H (u \psi) = u \Delta \psi + u \partial_l \psi + 2 \langle \nabla u, \nabla \psi \rangle,
\end{equation*}
which is a consequence of the fact that $u$ is a solution to the conjugate heat equation.
Integration by parts yields
\begin{equation*}
 (u \psi) (x,t) =  - \int_t^{t_0 + 2\gamma^2} \int_{M} G u ( - \Delta \psi + \partial_l \psi)+2u \langle \nabla G, \nabla \psi\rangle\,  dm\, dl.
\end{equation*}
From the properties of $\psi$ and \eqref{eq:mean0}, it follows that
\begin{equation}\label{eq:mean3}
(u \psi)(x,t) \leq C_2 \int_{t}^{t_0 + 2\gamma^2} \int_{B} \,\left(Gu+u|\nabla G|\right) \,dm\, dl=:C_2(I_1+I_2).
\end{equation}

We first estimate $I_1$.
By the H\"older inequality, \eqref{eq:mean0} and Lemma \ref{lem:premean},
\begin{align}\label{eq:mean10}
I_1 &\leq \bigg( \int_{t}^{t_0 + 2\gamma^2} \int_{B} G^q \,dm\, dl \bigg)^{1/q} \bigg( \int_{t}^{t_0 + 2\gamma^2} \int_{B} u^p \,dm\, dl \bigg)^{1/p} \\ \notag
     &\leq \bigg( \int_{t}^{t_0 + 2\gamma^2} \int_{B} G^q \,dm\, dl \bigg)^{1/q} \bigg( \int_{Q^+(x_0,\beta, t_0)} u^p \,dm\, dl \bigg)^{1/p}\\ \notag
     &\leq C_3 \bigg( \int_{t}^{t_0 + 2\gamma^2} \int_{B} G^q\, dm\, dl \bigg)^{1/q} \Vert u \Vert_{L^2 (Q^+(x_0,1, t_0))}
\end{align}
for $q = p/(p-1)$.
By Proposition \ref{prop:upper heat kernel},
for all $l\in (t,t_0+2\gamma^2]$ we have
\begin{equation}\label{eq:mean1}
G(z,l) \leq \frac{C_4}{(l-t)^{n/2}}.
\end{equation}
We also see
\begin{equation*}
\frac{d}{dl} \int_{M}\, G \,dm = - \int_{M} \,H\,G \,dm \leq n \int_{M} G \,dm
\end{equation*}
since $H \geq -n$ on $M \times [t_0 -1/2, t_0+1]$;
in particular,
for all $l\in (t,t_0+2\gamma^2]$,
\begin{equation}\label{eq:mean2}
\int_{M} G \,dm \leq \exp(n(l-t)) \leq C_{5}.
\end{equation}
Combining \eqref{eq:mean10}, \eqref{eq:mean1} and \eqref{eq:mean2},
we obtain
\begin{align*}
I_1 &\leq  C_6 \bigg( \int_{t}^{t_0 + 2\gamma^2} \int_{B} G  \frac1{(l-t)^{(q-1) n/2}} \,dm \,dl \bigg)^{1/q}  \Vert u \Vert_{L^2 (Q^+(x_0, 1,t_0))} \\
      &\leq C_7 \bigg( \int_{t}^{t_0+ 2\gamma^2}  \frac1{(l-t)^{(q-1) n/2}} \,dl \bigg)^{1/q} \Vert u \Vert_{L^2 (Q^+(x_0, 1,t_0))} .
\end{align*}
Since $p > n+2$, we have $(q - 1) n/2 < 1$;
in particular,
\begin{equation}\label{eq:mean4}
I_1 \leq C_8 \Vert u \Vert_{L^2(Q^+(x_0, 1,t_0))}.
\end{equation}

We next derive a bound of $I_2$.
The estimate \eqref{eq:mean1} implies that for each $l \in (t, t_0+2\gamma^2]$,
\begin{equation*}
A_l := \sup_{M \times \left[\frac{l+t}2, l \right]} G \leq \frac{C_9}{(l-t)^{n/2}}.
\end{equation*}
Using Theorem \ref{thm:Zhang},
we possess
\begin{equation*}
\frac{|\nabla G|^2}{G^2}(z,l) \leq \frac{2}{l-t} \log \frac{A_l}{G(z,l)}.
\end{equation*}
Since $G(z,l) / A_l \leq 1$,
we have
\begin{equation*}
|\nabla G |^q \leq G \frac{C_{10} A_l^{q-1}}{(l-t)^{q/2}} \bigg( \frac{G}{A_l} \bigg)^{q-1} \bigg( {\log \frac{A_l}{G}} \bigg)^{q/2}\leq G \frac{C_{11} A_l^{q-1}}{(l-t)^{q/2}}\leq \frac{C_{12}}{(l-t)^{((n+1)q - n)/2}} G
\end{equation*}
at $(z,l)$,
where we used a fact that a function $\varphi(\xi):=\xi^{q-1} (\log (1/\xi))^{q/2}$ takes its maximum value $e^{-q/2} (q/2(q-1))^{q/2}$ at $\xi=e^{-q/2(q-1)}$ on $(0,1]$.
Using the H\"older inequality, this gradient bound, \eqref{eq:mean2}, Lemma \ref{lem:premean} with \eqref{eq:mean0}, we conclude
\begin{align*}
I_2 &\leq \bigg( \int_{t}^{t_0 + 2\gamma^2} \int_{B} |\nabla G|^q\, dm\, dl \bigg)^{1/q} \bigg( \int_{t}^{t_0 +2\gamma^2} \int_{B} u^p \,dm\, dl \bigg)^{1/p} \\
     &\leq \bigg( \int_{t}^{t_0 + 2\gamma^2} \int_{B} |\nabla G|^q\, dm \,dl \bigg)^{1/q} \bigg( \int_{Q^+(x_0,\beta, t_0)} u^p \,dm\, dl \bigg)^{1/p} \\
     &\leq C_{13}\,\bigg( \int_{t}^{t_0 + 2\gamma^2}  \frac{1}{(l-t)^{((n+1)q - n)/2}}\, dl \bigg)^{1/q} \Vert u \Vert_{L^2 (Q^+ (x_0, 1,t_0))}.
\end{align*}
From $p > n+2$, we deduce $((n+1)q - n)/2 < 1$, and hence
\begin{equation}\label{eq:mean5}
I_2 \leq C_{14} \Vert u \Vert_{L^2 (Q^+ (x_0,1, t_0))}.
\end{equation}
Combining \eqref{eq:mean3}, \eqref{eq:mean4}, \eqref{eq:mean5},
we arrive at
\begin{equation*}
(u\psi)(x,t) \leq C_{15} \Vert u \Vert_{L^2 (Q^+ (x_0, 1,t_0))}.
\end{equation*}
The desired assertion follows from $\psi(x,t)\geq \rho$.
We complete the proof.
\end{proof}

\begin{rem}
Besides the mean value inequality for conjugate heat equation,
Bamler-Zhang \cite{BZ1} have also produced a mean value inequality for heat equation (see \cite[Lemma 4.3, Proposition 4.4]{BZ1}).
Theorems \ref{thm:distcon} and \ref{thm:cutoff} together with their argument enable us to generalize such a mean value inequality in our setting.
\end{rem}

%%%%%%%%%%%%%%%%%%%%%%%%%%%%
%%%%%%%%%%%%%%%%%%%%%%%%%%%%
%%%%%%%%%%%%%%%%%%%%%%%%%%%%
\section{Gaussian heat kernel estimate}\label{sec:heat kernel bounds}
In this section,
we give a proof of Theorem \ref{thm:mainhk}.
First,
we prove the lower bound \eqref{thm:mainhklow}.
To do so,
we prepare the following lemma:
\begin{lem}\label{lem:lowerlem}
For any $A>0$,
there is a constant $B>1$ depending only on $n,T,g(0)$ and $A$ such that the following holds:
We assume $H \leq H_1$ for $H_1> 0$.
For $s,t \in [0,T)$ with $s<t$,
we suppose $t-s \leq A\, H_1^{-1}$ and $s \geq (t-s)/A$.
If $d_t(x,y)\geq B\sqrt{t-s}$,
then $d_s(x,y)\geq B^{-1}d_t(x,y)$.
\end{lem}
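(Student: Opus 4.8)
The plan is to iterate the short–time distance distortion estimate of Theorem~\ref{thm:distcon} over a bounded number of time steps. As a preliminary reduction I would fix the scale $r_0:=\sqrt{(t-s)/A}$: the hypothesis $t-s\le AH_1^{-1}$ gives $H\le H_1\le A/(t-s)=r_0^{-2}$ on all of $M\times[0,T)$, while $s\ge (t-s)/A$ gives $r_0\le\sqrt{s}\le\sqrt{t_0}$ for every $t_0\in[s,t]$. Hence at any time $t_0\in[s,t]$ with $d_{t_0}(x,y)\ge r_0$, Theorem~\ref{thm:distcon} is applicable with radius $r_0$.

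Next I would discretize the interval $[s,t]$. Let $\alpha\in(0,1)$ be the constant of Theorem~\ref{thm:distcon}, and put $N:=\lceil A/\alpha\rceil$, which depends only on $n,T,g(0)$ and $A$; set $\tau:=(t-s)/N$, so $\tau\le\alpha r_0^2$, and $s_j:=s+j\tau$ for $j=0,\dots,N$, so that $s_0=s$ and $s_N=t$. Define $B:=\alpha^{-N}\max\{2,A^{-1/2}\}$, which satisfies $B>1$ and depends only on $n,T,g(0)$ and $A$. Assuming $d_t(x,y)\ge B\sqrt{t-s}=B\sqrt{A}\,r_0$, I would prove by downward induction on $j\in\{0,1,\dots,N\}$ the inequality $d_{s_j}(x,y)\ge\alpha^{N-j}d_t(x,y)$. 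The case $j=N$ is immediate. For the passage from $s_j$ to $s_{j-1}$ (with $1\le j\le N$), the inductive hypothesis together with $B\ge\alpha^{-N}A^{-1/2}$ yields $d_{s_j}(x,y)\ge\alpha^{N-j}B\sqrt{A}\,r_0\ge\alpha^{N-1}B\sqrt{A}\,r_0\ge r_0$, so Theorem~\ref{thm:distcon} applied at $t_0=s_j$ with radius $r_0$ — legitimate since $|s_{j-1}-s_j|=\tau\le\alpha r_0^2$ — gives $d_{s_{j-1}}(x,y)\ge\alpha\,d_{s_j}(x,y)\ge\alpha^{N-(j-1)}d_t(x,y)$. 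Evaluating at $j=0$ and invoking $B\ge\alpha^{-N}$ gives $d_s(x,y)\ge\alpha^N d_t(x,y)\ge B^{-1}d_t(x,y)$, the desired conclusion.

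I expect the only genuine obstacle to be ensuring that the distance between $x$ and $y$ stays above the threshold $r_0$ at every intermediate time $s_j$, since otherwise Theorem~\ref{thm:distcon} cannot be invoked there; this is exactly why $B$ must absorb the accumulated distortion factor $\alpha^{-N}$, and why the hypothesis $d_t(x,y)\ge B\sqrt{t-s}$ (rather than merely $d_t(x,y)>0$) is needed. The remaining conditions — the global bound $H\le r_0^{-2}$, the admissibility $r_0\le\sqrt{t_0}$, and the control of $N$ purely in terms of the allowed parameters — follow directly from $t-s\le AH_1^{-1}$ and $s\ge(t-s)/A$, and no rescaling is required since Theorem~\ref{thm:distcon} already permits an arbitrary radius.
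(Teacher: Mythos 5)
Your proposal is correct and follows essentially the same strategy as the paper's proof: fix the scale $r_0=A^{-1/2}\sqrt{t-s}$, partition $[s,t]$ into a bounded number of steps each of length at most $\alpha r_0^2$, and iterate Theorem~\ref{thm:distcon}, choosing $B$ large enough to absorb the accumulated factor $\alpha^{-N}$ and keep $d_{s_j}(x,y)\geq r_0$ at every intermediate time so that the theorem remains applicable. The only differences from the paper (a single unified argument rather than a split into $A\le\alpha$ and $A>\alpha$, equal step sizes instead of equal-then-remainder, and a slightly different explicit formula for $B$) are cosmetic.
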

\begin{proof}
Let $\alpha \in (0,1)$ be the constant obtained in Theorem \ref{thm:distcon}.
We first consider the case of $A\in (0,\alpha]$.
We set
\begin{equation*}
\theta:=\alpha^{-1} A \in (0,1],\quad B:=\alpha^{-1}A^{-1/2},\quad r_0:=A^{-1/2}\sqrt{t-s}.
\end{equation*}
It holds that
\begin{equation*}
r_0\leq \sqrt{s}\leq \sqrt{t},\quad H\leq H_1 \leq  \frac{A}{t-s}= r^{-2}_0,\quad t-s\leq \theta^{-1}(t-s)\leq \alpha r^{2}_0,\quad d_t(x,y)\geq \alpha^{-1}r_0\geq r_0,
\end{equation*}
and hence one can apply Theorem \ref{thm:distcon}.
It follows that
\begin{equation*}
d_s(x,y)\geq \alpha d_t(x,y)\geq \alpha A^{1/2}d_t(x,y)=B^{-1}d_t(x,y),
\end{equation*}
which is a desired estimate.

We next consider the remaining case of $A\in (\alpha,\infty)$.
We put
\begin{equation*}
\theta:=\alpha^{-1} A \in (1,\infty),\quad B:=\alpha^{-(\theta+2)},\quad r_0:=A^{-1/2}\sqrt{t-s}.
\end{equation*}
Let $N\in \mathbb{N}$ be the integer part of $\theta$,
and let $s=\tau_{N+1}\leq \tau_{N}< \dots< \tau_1 < \tau_0=t$ be a division of $[s,t]$ such that $\tau_i-\tau_{i+1}=\theta^{-1}(t-s)$ for $i=0,\dots,N-1$.
We see
$\tau_N-\tau_{N+1}\in [0, \theta^{-1}(t-s))$.
We possess
\begin{equation*}
r_0\leq \sqrt{s} \leq \sqrt{\tau_i}\leq \sqrt{t},\quad H\leq H_1 \leq  \frac{A}{t-s}= r^{-2}_0,\quad \tau_i-\tau_{i+1}\leq \theta^{-1}(t-s)= \alpha r^{2}_0
\end{equation*}
for all $i=0,\dots,N$.
In view of
\begin{equation*}
B\geq \alpha^{-\theta-1}A^{-1/2}\geq \alpha^{-N-1}A^{-1/2},
\end{equation*}
we see
\begin{equation*}
d_{\tau_0}(x,y)=d_{t}(x,y)\geq B\sqrt{t-s}\geq \alpha^{-N-1} A^{-1/2}\sqrt{t-s}=\alpha^{-N-1}r_0 \geq r_0,
\end{equation*}
and hence Theorem \ref{thm:distcon} implies
\begin{equation*}
d_{\tau_1}(x,y)\geq \alpha \,d_{\tau_0}(x,y)\geq \alpha^{-N}r_0\geq r_0.
\end{equation*}
Using Theorem \ref{thm:distcon} again,
we obtain
\begin{equation*}
d_{\tau_2}(x,y)\geq \alpha \,d_{\tau_1}(x,y)\geq \alpha^{-N+1}r_0\geq r_0.
\end{equation*}
By repeating this procedure,
we arrive at
\begin{equation*}
d_{\tau_{i}}(x,y)\geq \alpha d_{\tau_{i-1}}(x,y)\geq \alpha^{-N+(i-1)}r_0\geq r_0
\end{equation*}
for all $i=1,\dots,N+1$.
Therefore,
\begin{align*}
d_s(x,y)&=d_{\tau_{N+1}}(x,y)\\
&\geq \alpha d_{\tau_{N}}(x,y)\geq \alpha^2 d_{\tau_{N-1}}(x,y)\geq \cdots \geq \alpha^{N+1}d_{t}(x,y)\\
&\geq \alpha^{\theta+2}d_t(x,y)\geq B^{-1}\sqrt{t-s}.
\end{align*}
This proves the lemma.
\end{proof}

Let us show the lower bound \eqref{thm:mainhklow}.
\begin{prop}\label{prop:mainlower} 
For any $A>0$,
there exist positive constants $\mathcal{C}_1,\mathcal{C}_2>0$ depending only on $n,T,g(0)$ and $A$ such that the following holds:
We assume $H \leq H_1$ for $H_1> 0$.
For $s,t \in [0,T)$ with $s<t$,
we suppose $t-s \leq A\, H_1^{-1}$ and $s \geq (t-s)/A$.
Then we have
\begin{equation*}
G(x,t;y,s) \geq \frac{\mathcal{C}_1}{(t-s)^{n/2}} \exp \left( {- \frac{\mathcal{C}_2\, d^2_s (x, y)}{t-s}} \right).
\end{equation*} 
\end{prop}
\begin{proof}
We possess
\begin{equation}\label{eq:hoge1}
H\leq H_1 \leq  \frac{A}{t-s}.
\end{equation}
Furthermore,
in view of Lemma \ref{prop:lowerS},
\begin{equation}\label{eq:hoge2}
-\frac{nA}{2(t-s)}\leq-\frac{n}{2s}\leq H
\end{equation}
on $M \times [s,t]$.
Theorem \ref{thm:lowerGauss} together with \eqref{eq:hoge1} tells us that
\begin{equation}\label{eq:weakGauss}
G(x,t; y,s) \geq \frac{C}{(t-s)^{n/2}} \exp \left( { - \frac{d^2_t(x,y)}{t-s}} \right).
\end{equation}
Let $B>1$ be the constant obtained in Lemma \ref{lem:lowerlem}.
If $d_t (x,y) \geq B\sqrt{t-s}$,
then $d_s (x,y) \geq B^{-1}\,d_t (x,y)$.
From \eqref{eq:weakGauss} it follows that
\begin{equation*}
G(x,t; y,s) \geq \frac{C}{(t-s)^{n/2}} \exp \left( {- \frac{d^2_t(x, y)}{t-s}  } \right) \geq \frac{C}{(t-s)^{n/2}} \exp \left( {- \frac{B^2 d^2_s (x, y)}{t-s} } \right).
\end{equation*}
On the other hand,
if $d_t (x,y)< B\sqrt{t-s}$,
then \eqref{eq:weakGauss} also leads to
\begin{equation*}
G(x,t; y,s) \geq \frac{C}{(t-s)^{n/2}} \exp \left( {- \frac{ d^2_t (x, y)}{t-s} } \right) \geq \frac{Ce^{-B^2} }{(t-s)^{n/2}}  \geq \frac{Ce^{-B^2} }{(t-s)^{n/2}} \exp \left( {- \frac{d^2_s (x, y)}{t-s} }\right).
\end{equation*}
Thus,
we complete the proof.
\end{proof}

We next prove the upper bound \eqref{thm:mainhkup}.
Similarly to Lemma \ref{lem:lowerlem},
we prepare the following:
\begin{lem}\label{lem:upperlem}
For any $A>0$,
there is a constant $B>1$ with $B^{-1}A^{-1/2}< 1$ depending only on $n,T,g(0)$ and $A$ such that the following holds:
Assume $H \leq H_1$ for $H_1> 0$.
For $s,t \in [0,T)$ with $s<t$,
we suppose $t-s \leq A\, H_1^{-1}$ and $s \geq (t-s)/A$.
Assume $d_s(x,y)\geq B\sqrt{t-s}$.
Then for all $l_1,l_2\in [s,t]$ we have
\begin{equation*}
B^{-1}\,d_{l_1}(x,y)\leq d_{l_2}(x,y)\leq B\,d_{l_1}(x,y).
\end{equation*}
\end{lem}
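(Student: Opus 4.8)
The argument is a variant of the proof of Lemma~\ref{lem:lowerlem}: I would iterate the distance distortion estimate of Theorem~\ref{thm:distcon} along a partition of $[s,t]$, but now propagate a \emph{two-sided} comparison forward from time $s$ and record the bound at every intermediate time, not only at the endpoints. First normalize the hypotheses: from $t-s\le AH_1^{-1}$ and $s\ge (t-s)/A$ one gets $H\le H_1\le A/(t-s)$ and $(t-s)/A\le s>0$. Put
\[
r_0:=A^{-1/2}\sqrt{t-s},
\]
so that $r_0^{-2}=A/(t-s)\ge H$ and $r_0=\sqrt{(t-s)/A}\le\sqrt{s}\le\sqrt{l}$ for every $l\in[s,t]$; hence Theorem~\ref{thm:distcon} applies with this $r_0$ at any base time $l_0\in[s,t]$ as soon as $d_{l_0}(x,y)\ge r_0$.

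Let $\alpha\in(0,1)$ be the constant of Theorem~\ref{thm:distcon}, set $\theta:=\alpha^{-1}A$, let $N:=\lfloor\theta\rfloor$, and partition $s=\tau_0<\tau_1<\cdots<\tau_N\le\tau_{N+1}=t$ with $\tau_{i+1}-\tau_i=\alpha r_0^2=\theta^{-1}(t-s)$ for $0\le i\le N-1$ and $0\le\tau_{N+1}-\tau_N<\alpha r_0^2$, exactly as in Lemma~\ref{lem:lowerlem} (if $\theta<1$ this is the single interval $[s,t]$). Choose
\[
B:=\max\{\alpha^{-\theta}A^{-1/2},\ \alpha^{-2\theta-2}\},
\]
which depends only on $n,T,g(0),A$, and assume $d_s(x,y)\ge B\sqrt{t-s}$, so that $d_{\tau_0}(x,y)\ge\alpha^{-N}r_0\ge r_0$. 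Applying Theorem~\ref{thm:distcon} successively at $\tau_0,\tau_1,\dots,\tau_N$ — each step being legitimate because an induction using the lower distortion $d_{\tau_{j+1}}(x,y)\ge\alpha\,d_{\tau_j}(x,y)$ keeps $d_{\tau_i}(x,y)\ge\alpha^{i}d_s(x,y)\ge\alpha^{i-N}r_0\ge r_0$ for $i\le N$ — yields, for every $i$ and every $l\in[\tau_i,\tau_{i+1}]$,
\[
\alpha\,d_{\tau_i}(x,y)\le d_l(x,y)\le\alpha^{-1}d_{\tau_i}(x,y),\qquad \alpha^{i}d_s(x,y)\le d_{\tau_i}(x,y)\le\alpha^{-i}d_s(x,y).
\]
Therefore $\alpha^{N+1}d_s(x,y)\le d_l(x,y)\le\alpha^{-(N+1)}d_s(x,y)$ for all $l\in[s,t]$, so any two of these distances satisfy $d_{l_2}(x,y)\le\alpha^{-2(N+1)}d_{l_1}(x,y)\le\alpha^{-2\theta-2}d_{l_1}(x,y)\le B\,d_{l_1}(x,y)$, and the reverse inequality follows by symmetry; this is the assertion. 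Finally $B^{-1}A^{-1/2}<1$ is immediate: if $B=\alpha^{-\theta}A^{-1/2}$ then $B^{-1}A^{-1/2}=\alpha^{\theta}<1$, while if $B=\alpha^{-2\theta-2}$ then the defining inequality $\alpha^{-2\theta-2}\ge\alpha^{-\theta}A^{-1/2}$ gives $A^{-1/2}\le\alpha^{-\theta-2}$, whence $B^{-1}A^{-1/2}\le\alpha^{\theta}<1$; also $B>1$ since $\alpha^{-2\theta-2}>1$.

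The only genuinely delicate point is the bookkeeping of $B$: it must simultaneously be large enough that $d_{\tau_i}(x,y)\ge r_0$ persists along the whole iteration (so that Theorem~\ref{thm:distcon} keeps applying), large enough to absorb the compounded distortion factor $\alpha^{-O(\theta)}$ after chaining, and yet small in the weak sense $B^{-1}A^{-1/2}<1$. This is routine but must be done with care; as in Lemma~\ref{lem:lowerlem}, one may prefer to treat the regimes $A\le\alpha$ and $A>\alpha$ separately with explicit choices of $\theta$ and $B$ in each, though the single choice above also works. Everything else is the maximum-principle input already packaged in Theorem~\ref{thm:distcon}, so no further analytic work is needed.
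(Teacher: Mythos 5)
Your proof is correct and takes essentially the same approach as the paper: both iterate the two-sided distance distortion estimate of Theorem~\ref{thm:distcon} over a partition of $[s,t]$ of mesh $\alpha r_0^2$, with $r_0=A^{-1/2}\sqrt{t-s}$, maintaining $d_{\tau_i}(x,y)\geq r_0$ inductively so the theorem keeps applying. The only cosmetic difference is that the paper splits into the regimes $A\leq\alpha$ and $A>\alpha$ with explicit $B$ in each and chains directly between $l_1$ and $l_2$ (costing a factor $\alpha^{N+1}$), whereas you use a single unified $B$ and route the comparison through the endpoint $s$ (costing $\alpha^{2(N+1)}$ and hence a slightly larger $B$), which is equally valid.
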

\begin{proof}
Let $\alpha \in (0,1)$ be the constant obtained in Theorem \ref{thm:distcon}.
We begin with the case of $A\in (0,\alpha]$.
Similarly to the proof of Lemma \ref{lem:lowerlem},
we put
\begin{equation*}
\theta:=\alpha^{-1} A \in (0,1],\quad B:=\alpha^{-1}A^{-1/2},\quad r_0:=A^{-1/2}\sqrt{t-s}.
\end{equation*}
Note that $B>1$ and $B^{-1}A^{-1/2}< 1$.
For all $l_1,l_2\in [s,t]$ we see
\begin{align*}
r_0&\leq \sqrt{s}\leq \sqrt{t},\quad H\leq H_1 \leq  \frac{A}{t-s}= r^{-2}_0,\\
|l_1-l_2|&\leq \theta^{-1}(t-s)\leq \alpha r^{2}_0,\quad d_{s}(x,y)\geq \alpha^{-1}r_0\geq r_0.
\end{align*}
In virtue of Theorem \ref{thm:distcon},
we obtain
\begin{equation*}
d_{l_1}(x,y)\geq \alpha d_{s}(x,y)\geq r_0,\quad d_{l_2}(x,y)\geq \alpha d_{l_1}(x,y)\geq \alpha A^{1/2}d_{l_1}(x,y)= B^{-1}d_{l_1}(x,y).
\end{equation*}
By switching the roles of $l_1$ and $l_2$,
we arrive at the desired estimate.

Next,
we study the case of $A\in (\alpha,\infty)$.
As in the proof of Lemma \ref{lem:lowerlem},
we define
\begin{equation*}
\theta:=\alpha^{-1} A \in (1,\infty),\quad B:=\alpha^{-(\theta+2)},\quad r_0:=A^{-1/2}\sqrt{t-s},
\end{equation*}
where $B>1$ and $B^{-1}A^{-1/2}< 1$.
Let $N\in \mathbb{N}$ be the integer part of $\theta$,
and let $s=\tau_{0}< \tau_{1}< \dots< \tau_N \leq \tau_{N+1}=t$ be a division of $[s,t]$ such that $\tau_{i+1}-\tau_{i}=\theta^{-1}(t-s)$ for $i=0,\dots,N-1$.
We see
$\tau_{N+1}-\tau_{N}\in [0, \theta^{-1}(t-s))$.
It holds that
\begin{equation*}
r_0\leq \sqrt{s} \leq \sqrt{\tau_i}\leq \sqrt{t},\quad H\leq H_1 \leq  \frac{A}{t-s}= r^{-2}_0,\quad \tau_{i+1}-\tau_{i}\leq \theta^{-1}(t-s)= \alpha r^{2}_0
\end{equation*}
for every $i=0,\dots,N$.
Since
\begin{equation*}
d_{\tau_0}(x,y)=d_{s}(x,y)\geq B\sqrt{t-s}\geq \alpha^{-N-1} A^{-1/2}\sqrt{t-s}=\alpha^{-N-1}r_0 \geq r_0,
\end{equation*}
Theorem \ref{thm:distcon} tells us that
\begin{equation*}
d_{\tau_1}(x,y)\geq \alpha \,d_{\tau_0}(x,y)\geq \alpha^{-N}r_0\geq r_0.
\end{equation*}
In the same manner as in the proof of Lemma \ref{lem:lowerlem},
we use Theorem \ref{thm:distcon} repeatedly.
We obtain
\begin{equation}\label{eq:upperlem1}
d_{\tau_{i}}(x,y)\geq \alpha d_{\tau_{i-1}}(x,y)\geq \alpha^{-N+(i-1)}r_0\geq r_0
\end{equation}
for all $i=1,\dots,N+1$.
For $l_1,l_2\in [s,t]$,
let $i_0\in \{0,\dots,N\}$ satisfy $l_1\in [\tau_{i_0},\tau_{i_0+1}]$.
Theorem \ref{thm:distcon} together with \eqref{eq:upperlem1} leads us to
\begin{equation}\label{eq:upperlem2}
d_{l_1}(x,y)\geq \alpha d_{\tau_{i_0}}(x,y)\geq \alpha^{-N+i_0} r_0\geq r_0.
\end{equation}
By using Theorem \ref{thm:distcon} together with \eqref{eq:upperlem1}, \eqref{eq:upperlem2} at most $N+1$ times,
we conclude
\begin{align*}
d_{l_2}(x,y)\geq \alpha^{N+1} d_{l_1}(x,y)\geq B^{-1}d_{l_1}(x,y).
\end{align*}
Switching the roles of $l_1$ and $l_2$,
we complete the proof.
\end{proof}

We now prove the upper bound \eqref{thm:mainhkup}.
\begin{prop}\label{prop:mainupper} 
For any $A>0$,
there exist positive constants $\mathcal{C}_3,\mathcal{C}_4>0$ depending only on $n,T,g(0)$ and $A$ such that the following holds:
We assume $H \leq H_1$ for $H_1> 0$.
For $s,t \in [0,T)$ with $s<t$,
we also suppose $t-s \leq A\, H_1^{-1}$ and $s \geq (t-s)/A$.
Then we have
\begin{equation*}
G(x, t; y, s) \leq \frac{\mathcal{C}_3}{(t-s)^{n/2}} \exp \left({- \frac{\mathcal{C}_4\,d^2_s (x, y)}{t-s}} \right).
\end{equation*} 
\end{prop}
\begin{proof}
We again notice \eqref{eq:hoge1} and \eqref{eq:hoge2}.
By Proposition \ref{prop:upper heat kernel}, 
for $l \in [s, t)$ we have
\begin{equation}\label{eq:preupper}
G(x, t; y, l) \leq \frac{C_1}{(t-l)^{n/2}}.
\end{equation}
Let $B>1$ be the constant in Lemma \ref{lem:upperlem} satisfying $B^{-1}A^{-1/2}< 1$.
When $d_s (x,y)<B\sqrt{t-s}$,
\eqref{eq:preupper} implies
\begin{equation*}
G(x,t; y, s) \leq \frac{C_1}{(t-s)^{n/2}} \leq \frac{C_1 e^{B^2}}{(t-s)^{n/2}}\exp \left( {- \frac{d^2_s(x,y)}{t-s} } \right).
\end{equation*}
This is a desired one.

We consider the case of $d_s (x,y)\geq B\sqrt{t-s}$.
We set
\begin{equation*}
c:=B^{-1}A^{-1/2}\in (0,1).
\end{equation*}
For all $l \in [s,t)$ we see
\begin{equation*}
c\sqrt{t-l}< \sqrt{T},\quad H\leq \frac{A}{t-s}\leq \frac{AB^2}{t-l}=\frac{1}{c^{2}(t-l)},
\end{equation*}
and hence
\begin{equation}\label{eq:main3}
m(B(x,c\sqrt{t-l},l)) \geq \kappa_1 (t-l)^{n/2}
\end{equation}
by Theorem \ref{thm:noncollapse}.
Proposition \ref{prop:Average} together with \eqref{eq:main3} implies
\begin{align}\label{eq:main1}
&\quad \,\,\frac{1}{ m(B(y,c\sqrt{t-l},l))} \int_{B(y,c\sqrt{t-l},l)} G(x, t; z, l) \,dm(z) \\ \notag
&\leq \bigg( \int_{B(x,c\sqrt{t-l},l)} G(x,t; z, l ) \,dm (z) \bigg)^{-1} \frac{C_2}{(t-l)^{n/2}} \exp \left( - \frac{d^2_{l} (x, y)}{16(t-l)}  \right).
\end{align}
By Proposition \ref{prop:mainlower},
\begin{equation}\label{eq:main2}
G(x,t;z,l) \geq \frac{C_3}{(t-l)^{n/2}} \exp \left( {- \frac{C_4 d^2_l(x, z)}{t-l}} \right)\geq \frac{C_5}{(t-l)^{n/2}}
\end{equation}
for all $z\in B(x,c\sqrt{t-l},l)$.
Combining \eqref{eq:main3}, \eqref{eq:main1}, \eqref{eq:main2},
we conclude
\begin{equation}\label{eq:main4}
\frac{1}{ m(B(y, c\sqrt{t-l},l))} \int_{B(y, c\sqrt{t-l},l)} G(x, t; z, l) \,dm(z) \leq  \frac{C_6}{(t-l)^{n/2}} \exp \left( - \frac{d^2_{l} (x, y)}{16(t-l)}  \right).
\end{equation}
Now,
we possess
\begin{align*}
c\sqrt{t-l}&<A^{-1/2}\sqrt{t-l}\leq A^{-1/2}\sqrt{t-s}\leq \sqrt{s}\leq \sqrt{l},\\
H(\cdot,\tau) &\leq \frac{A}{t-s}\leq \frac{c^2(t-l)}{l-\tau}  \frac{A}{t-s}\leq \frac{c^2A}{l-\tau}
\end{align*}
for $\tau \in [l-c^2(t-l),l]$,
and hence Theorem \ref{thm:inflat} tells us that
\begin{equation}\label{eq:main5}
m_l(B(y,c\sqrt{t-l},l))\leq \kappa_2 (t-l)^{n/2}.
\end{equation}
From \eqref{eq:main4}, \eqref{eq:main5} and Lemma \ref{lem:upperlem}, one can derive
\begin{equation*}
\int_{B(y, c\sqrt{t-l},l)} G(x, t; z, l)\, dm(z) \leq  C_7 \exp \left( {- \frac{d^2_{l}(x,y)}{16(t-l)} } \right) \leq C_7 \exp \left( {- \frac{B^{-2} \,d^2_{s} (x, y)}{16(t-s)}}  \right).
\end{equation*}
This together with \eqref{eq:preupper} leads us to
\begin{equation*}
\int_{B(y, c\sqrt{t-l},l)} G(x, t; z, l)^2\, dm(z) \leq \frac{C_8}{(t-l)^{n/2}} \exp \left({ - \frac{B^{-2}\,d^2_s(x, y)}{16(t-s)}} \right).
\end{equation*}
Integrating over $[s, (s+ t)/2]$, we obtain
\begin{equation*}
\int^{\frac{s+t}{2}}_s \int_{B(y,c\sqrt{t-l},l)} \,G(x, t; z, l)^2 \,dm(z)\, dl \leq \frac{C_9}{(t-s)^{n/2-1}} \exp \left( { -  \frac{B^{-2}\,d^2_s (x,y)}{16(t-s)} } \right).
\end{equation*}
We now set
\begin{equation*}
r:=c\sqrt{\frac{t-s}{2}}.
\end{equation*}
We notice that
$t-l \ge (t-s)/2$ for $l\in [s, (s+t)/2]$,
and hence
\begin{equation*}
Q^{+}(y,r,s)\subset \left\{(z,l)\in M\times [0,T) ~~\middle|~~  z\in B(y,c\sqrt{t-l},l),\, l\in \left[s,\frac{s+t}{2}\right] \right\}.
\end{equation*}
It follows that
\begin{equation*}
 \int_{Q^+ (y, r,s)} G(x, t; z, l)^2\, dm(z)\, dl \leq \frac{C_9}{(t-s)^{n/2-1}} \exp \left( {- \frac{B^{-2}\,d^2_s(x,y)}{16(t-s)} } \right).
 \end{equation*}
By virtue of Theorem \ref{thm:mean},
\begin{equation*}
G (x,t; y, s)^2 \leq \frac{C_{10}}{r^{n+2}} \int_{Q^+ (y, r,s)} G(x, t; z, l)^2 \,dm(z) \,dl \leq \frac{C_{11}}{(t-s)^{n}} \exp \left( {- \frac{B^{-2}\,d^2_s(x, y)}{16(t-s)} } \right).
\end{equation*}
Thus,
we complete the proof.
\end{proof}

We finally prove the gradient estimate \eqref{thm:maingrad}.
\begin{prop}\label{prop:maingrad} 
For any $A>0$,
there exist positive constants $\mathcal{C}_5,\mathcal{C}_6>0$ depending only on $n,T,g(0)$ and $A$ such that the following holds:
We assume $H \leq H_1$ for $H_1> 0$.
For $s,t \in [0,T)$ with $s<t$,
we also suppose $t-s \leq A\, H_1^{-1}$ and $s \geq (t-s)/A$.
Then we have
\begin{equation*}
|\nabla_x G|(x,t;y,s)\leq \frac{\mathcal{C}_5}{(t-s)^{(n+1)/2}} \exp \left({- \frac{ \mathcal{C}_6 \,d^2_s (x, y)}{t-s}} \right).
\end{equation*} 
\end{prop}
\begin{proof}
We set $G(x,t):=G(x,t;y,s)$.
Due to Theorem \ref{thm:Zhang},
\begin{equation*}
\frac{|\nabla G|^2}{G^2}(x,t)  \leq \frac{C_1}{t-s}\log \frac{\mathcal{A}}{G(x,t)},
\end{equation*}
where
\begin{equation*}
\mathcal{A}:=\sup_{M\times \left[\frac{s+t}{2},t  \right]}G.
\end{equation*}
It follows that
\begin{equation*}
|\nabla G|^2(x,t)  \leq \frac{C_1}{t-s} \mathcal{A}G(x,t)\frac{G(x,t)}{\mathcal{A}}\log \frac{\mathcal{A}}{G(x,t)}\leq \frac{C_1}{t-s} \mathcal{A}G(x,t)
\end{equation*}
since $G(x,t)/\mathcal{A}\leq 1$.
By Propositions \ref{prop:upper heat kernel} and \ref{prop:mainupper},
we possess
\begin{equation*}
\mathcal{A}\leq \frac{C_2}{(t-s)^{n/2}},\quad G(x,t)\leq \frac{C_3}{(t-s)^{n/2}} \exp \left({- \frac{C_4\,d^2_s (x, y)}{t-s}} \right).
\end{equation*}
Combining them,
we arrive at
\begin{equation*}
|\nabla G|^2(x,t)  \leq \frac{C_5}{(t-s)^{n+1}} \exp \left({- \frac{C_4\,d^2_s (x, y)}{t-s}} \right).
\end{equation*}
This completes the proof.
\end{proof}

\begin{proof}[Proof of Theorem \ref{thm:mainhk}]
By Propositions \ref{prop:mainlower}, \ref{prop:mainupper}, \ref{prop:maingrad},
we complete the proof.
\end{proof}

%%%%%%%%%%%%%%%%%%%%%%%%%%%%
%%%%%%%%%%%%%%%%%%%%%%%%%%%%
%%%%%%%%%%%%%%%%%%%%%%%%%%%%
\subsection*{{\rm Acknowledgements}}
The authors thank the anonymous referee for useful comments on references.
The first author was supported by JSPS KAKENHI (JP19K14521, 23K03105). 
The second author was supported by JSPS KAKENHI (JP21K20315, 22H04942, 23K12967).

%%%%%%%%%%%%%%%%%%%%%%%%%%%%

\end{document}